\def\O{\Omega}
\def\CM{\mathcal{X}}
\def\CN{\mathcal{Y}}
\renewcommand\sp{\mathop{\mathrm{Sp}}\nolimits}
\newtheorem{remark}{Remark}[section]
\newtheorem{lemma}{Lemma}[section]
\newtheorem{theorem}{Theorem}[section]
\newtheorem{prop}{Proposition}[section]
\newtheorem{assumption}{Assumption}[section]
\newcommand\bu{\boldsymbol{u}}
\newcommand\bv{\boldsymbol{v}}
\def\hdel{\widehat{\delta}}%\def\VK{V^{\E}}
\def\CM{\mathcal{X}}
\def\CN{\mathcal{Y}}
\newcommand\bI{\boldsymbol{I}}
\newcommand\bS{\boldsymbol{S}}
\newcommand\bT{\boldsymbol{T}}
\newcommand\bQ{\boldsymbol{Q}}
\newcommand\bxi{\boldsymbol{\xi}}
\def\CT{{\mathcal T}}
\newcommand{\dd}{\texttt{d}}
\newcommand\bsig{\boldsymbol{\sigma}}
\newcommand\btau{\boldsymbol{\tau}}
\newcommand\bPi{\boldsymbol{\Pi}}
\newcommand\R{\mathbb{R}}
\renewcommand\H{\mathrm{H}}
\renewcommand\L{\mathrm{L}}
\renewcommand\O{\Omega}
\newcommand\bdiv{\mathop{\mathbf{div}}\nolimits}
\renewcommand\div{\mathop{\mathrm{div}}\nolimits}
\newcommand\tD{\mathtt{D}}
\newcommand\tr{\mathop{\mathrm{tr}}\nolimits}
\renewcommand\sp{\mathop{\mathrm{sp}}\nolimits}
\newcommand\dist{\mathop{\mathrm{dist}}\nolimits}
\newcommand\LO{\L^2(\O)}
\newcommand\ws{\widehat{s}}
\newcommand\ds{\displaystyle}
\newcommand{\vertiii}[1]{{\left\vert\kern-0.25ex\left\vert\kern-0.25ex\left\vert #1 
    \right\vert\kern-0.25ex\right\vert\kern-0.25ex\right\vert}}
\begin{document}

\title[Mixed formulation for the elasticity eigenproblem]
{Displacement-pseudostress formulation for the linear elasticity spectral problem}

%\thanks{}
\author{Daniel Inzunza}
\address{CI$^2$MA and Departamento de Ingenier\'ia Matem\'atica, Facultad de Ciencias F\'isicas y Matem\'aticas,
Universidad de Concepci\'on, Casilla 160-C, Concepci\'on, Chile.}
\email{dinzunza@ing-mat.udec.cl }
\thanks{The first author was partially supported by CONICYT/PAI/Concurso Apoyo a Centros Cient\'ificos y Tecnol\'ogicos de Excelencia con Financiamiento Basal AFB 170001.}
\author{Felipe Lepe}
\address{GIMNAP-Departamento de Matem\'atica, Universidad del B\'io - B\'io, Casilla 5-C, Concepci\'on, Chile.}
\email{flepe@ubiobio.cl}
\thanks{The second author was partially supported by
ANID-Chile through FONDECYT project 11200529 (Chile).}

\author{Gonzalo Rivera}
\address{Departamento de Ciencias Exactas,
Universidad de Los Lagos, Casilla 933, Osorno, Chile.}
\email{gonzalo.rivera@ulagos.cl}
\thanks{The third author was supported by
ANID-Chile through FONDECYT project 11170534 (Chile).}

%\thanks{The third author was partially supported by BASAL project CMM,
%Universidad de Chile (Chile).
%The fourth author was partially supported by CONICYT-Chile through
%the project AFB170001 of the PIA Program:
%Concurso Apoyo a Centros Cient\'ificos y Tecnol\'ogicos
%de Excelencia con Financiamiento Basal}

\subjclass[2000]{Primary 65N30, 65N25, 65N12, 76M10}

\keywords{Elasticity equations, eigenvalue problems,  error estimates}

\begin{abstract}
In this paper we analyze a mixed  displacement-pseudostress formulation
for the elasticity eigenvalue problem. We propose a finite element method
to approximate the pseudostress tensor with Raviart-Thomas elements
and the displacement with piecewise polynomials.
 With the aid of the classic  theory for compact operators, we prove that our method is convergent and does not introduce spurious modes. Also, we obtain error estimates for the proposed method. Finally, we report some numerical tests supporting the theoretical results. %that we will contrast with numerical tests in different scenarios
\end{abstract}

\maketitle
%%%%%%%%%%%%%%%%%%%%%%%%%%%%%%%%%%%%%%%%%%%%%%%%%%%%%%%%%%%%%
%%%%%%%%%%%%%%%%%%%%%%%%%%%%%%%%%%%%%%%%%%%%%%%%%%%%%%%%%%%%%
%%%%%%%%%%%%%%%%%%%%%%%%%%%%%%%%%%%%%%%%%%%%%%%%%%%%%%%%%%%%%
%%%%%%%%%%%%%%%%%%%%%%%%%%%%%%%%%%%%%%%%%%%%%%%%%%%%%%%%%%%%%
%%%%%%%%%%%%%%%%%%%%%%%%%%%%%%%%%%%%%%%%%%%%%%%%%%%%%%%%%%%%%
%%%%%%%%%%%%%%%%%%%%%%%%%%%%%%%%%%%%%%%%%%%%%%%%%%%%%%%%%%%%%
%%%%%%%%%%%%%%%%%%%%%%%%%%%%%%%%%%%%%%%%%%%%%%%%%%%%%%%%%%%%%
%%%%%%%%%%%%%%%%%%%%%%%%%%%%%%%%%%%%%%%%%%%%%%%%%%%%%%%%%%%%%

\section{Introduction}\label{sec:intro}
The linear elasticity equations are an important subject of study for engineers and mathematicians that describes the displacement of some structure with elastic properties. For a given domain $\O\subset\mathbb{R}^n$, $n\in\{2,3\}$, with Lipschitz boundary $\partial\O$, we are interested in the elasticity eigenvalue problem: Find $\kappa\in\mathbb{R}$ and the pair $(\bsig,\bu)$ such that
\begin{equation}\label{def:elast_system}
\left\{
\begin{array}{rcll}
\bsig & = & 2\mu\boldsymbol{\varepsilon}(\bu)+\lambda\tr(\boldsymbol{\varepsilon}(\bu))\mathbb{I}&  \text{ in } \quad \Omega, \\
\div\bsig & = & -\kappa\bu & \text{ in } \quad \Omega, \\
\bu & = & \mathbf{0} & \text{ on } \quad \partial\Omega,
\end{array}
\right.
\end{equation}
where $\bu$ represents the displacement of the elastic structure, $\bsig$ is the Cauchy symmetric tensor, $\lambda$ and $\mu$ are  the positive Lam\'e constants, $\mathbb{I}\in\mathbb{R}^{n\times n}$ is the identity matrix and $\boldsymbol{\varepsilon}(\bu)$ represents the tensor of small deformations,  given by $\boldsymbol{\varepsilon}(\bu):=\frac{1}{2}(\nabla\bu+(\nabla\bu)^{\texttt{t}})$, where $\texttt{t}$ is the transpose operator. It is well known that the Lam\'e constant $\lambda$ depends on the Poisson's ratio of the structure which, when it tends  to $1/2$, produces that $\lambda$ tends to infinity, introducing instabilities in the numerical methods like the locking effect. 

The importance of approximate  the eigenmodes of  system \eqref{def:elast_system} lies in the fact that the stability of different elastic structures used in real applications, like beams, rods, plates, just for mention a few, depend on the accurate knowledge of the vibration modes of these structures.

With the aim of approximate the solutions of the linear elasticity equations, several numerical methods have been designed, firstly for the load problem in the past years. We refer to \cite{MR3715319, MR3860570, MR2824163, MR3453481,MR2831058,MR3962898,MR3036997,MR4050542}, and the reference therein, just to mention some results on these subjects.

In this sense, there are different formulations to study the spectral linear elasticity problem, where different unknowns are introduced in order to obtain the most complete information about the response of the elastic structures. It is clear that the main unknowns are the displacement and the Cauchy
stress tensor. However,   new formulations have been analyzed where additional unknowns are introduced. For example, in \cite{MR3036997} the authors introduce a mixed formulation depending only on the Cauchy stress tensor, where the symmetry is weakly  imposed  and the displacement can be recovered with a post-process. Analysis with a discontinuous Galerkin method (DG) for this formulation has been also proposed in  \cite{MR3962898} for the elasticity spectral problem, where the advantages of considering more general meshes are presented. Nevertheless, the main disadvantage of this method lies in the correct choice of the stabilization parameter, since, depending on the configuration of the problem, namely the geometry, boundary conditions or physical quantities, it can generate  spurious eigenvalues in the computed spectrum. This has been also observed in other problems where the methods need to be stabilized for some  parameter, as it occurs in  \cite{MR3660301,MR4077220,MR4253143, MR3340705}, just for mention some recent papers that deal with this subject.

The additional costs that these new methods bring due their nature,  are not a difficulty when spectral problems are solved with the classic finite element method (FEM), since there are not a dependency
on some other parameters when we are approximating the spectrum of the solution operators. This is a clear advantage of the FEM, and is  the motivation of the present paper.  Specifically, the purpose of this work is to demonstrate the advantages of applying standard mixed finite elements to tensorial formulations in eigenvalue problems. More precisely, the formulation introduced in \cite{ MR3453481} for problem \eqref{def:elast_system}, where the main unknowns are the  displacement of the structure and its pseudostress. These pseudostress formulations, previously introduced in \cite{MR2642330,MR2594823, MR2878511} in contexts unrelated to eigenvalue problems, have been  subject of attention in the community since this tensor 
allows to approximate other variables as the gradients of the velocity and  pressure in flow problems, and the Cauchy stress tensor or the strain tensor for linear elasticity, among others.

The proposed mixed element method  approximates the  pseudostress tensor  with Raviart-Thomas elements, which must be understood in the tensor context and the displacement with piecewise polynomials, both of order $k\geq 0$. With these mixed method,  we approximate the spectrum and the corresponding eigenfunctions, but also we conclude that the method does introduce spurious eigenvalues and delivers an accurate approximation of the spectrum.   In addition, unlike \cite{MR3962898,MR3036997} where the authors have used the theory for non-compact operators to analyze the elasticity spectral problem, since the solution operators in these references are defined in $\mathbb{H}(\bdiv)$,  we will use the classic theory for compact operators due the simplicity of the proposed solution operator that is defined only in $\L^2(\Omega)$.  

The paper is organized as follows: In Section \ref{sec:model_problem} we present the elasticity eigenvalue problem and its pseudostress-displacement formulation. We recall some important properties. Also we introduce the corresponding solution operator and its corresponding spectral characterization. In Section \ref{sec:mixed_method} we present  the mixed element method for our spectral problem. We recall some approximation properties, ad-hoc for the regularity results established in the previous section, and analyze the stability of the mixed method for the eigenvalue problem. We introduce the discrete solution operator. In Section
 \ref{sec:error} we analyze the convergence of our method, by applying the results of \cite{MR1115235}. Also, we prove error estimates for the eigenvalues and eigenfunctions. Finally, in Section \ref{sec:numerics}, we report a set of numerical experiments that allow us to assess the convergence properties of the method.

We end this section with some notations that will be used below. Given $n\in\{ 2,3\}$, we denote $\mathbb{R}^{n\times n}$ the space of vectors and tensors of order $n$ with entries in $\mathbb{R}$, and $\mathbb{I}$ is the identity matrix of $\mathbb{R}^{n\times n}$. Given any $\boldsymbol{\tau}:=(\tau_{ij})$ and $\boldsymbol{\sigma}:=(\sigma_{ij})\in \mathbb{R}^{n\times n}$, we write
$$
	\boldsymbol{\tau}^{\texttt{t}}:=(\tau_{ji}), \quad \tr(\boldsymbol{\tau}):=\sum_{i=1}^{n}\tau_{ii}, \quad \mbox{and} \quad \boldsymbol{\tau:\sigma}:=\sum_{i,j=1}^{n} \tau_{ij}\,\sigma_{ij}, 
$$
to refer to the transpose, the trace and the tensorial product between $\boldsymbol{\tau}$ and $\boldsymbol{\sigma}$ respectively. 

For $s\geq 0$, we denote as $\| \cdot \|_{s, \O}$ the norm of the Sobolev space $\H^{s}(\O)$ or
$\mathbb{H}^s(\O):=\H^{s}(\O)^{n\times n}$ with n=2,3,  for scalar  and tensorial fields, respectively, with the convention $\H^0(\O):=\LO$ and $\mathbb{H}^0(\O):=\mathbb{L}^2(\O)$. Furthermore, with $\div$ denoting the usual divergence operator, we define the Hilbert space
\[
	\H(\bdiv, \O):=\{ \boldsymbol{\tau} \in \L^{2}(\O) \,:\, \div(\boldsymbol{\tau}) \in \L^{2}(\O) \},
\] 
endowed with the norm $ \| \boldsymbol{\tau} \|_{\bdiv, \O}^{2}:= \|\boldsymbol{\tau} \|_{0,\O}^{2} + \| \div(\boldsymbol{\tau}) \|_{0,\O}^{2} $. The space of matrix valued functions whose rows belong to $\H(\div, \O)$ will be denoted by $\mathbb{H}(\bdiv, \O)$ where $\bdiv$ stands for the action of the divergence operator along on each row of a tensor.

Finally, we use $C$ with or without subscripts, bar, tildes or hats, to denote  generic constants independent of the discretization parameter, which may take different values at different places.  

%Let $\O$ be a polygonal Lipschitz bounded domain of $\R^2$ with
%boundary $\DO$. For $s\geq 0$, $\norm{\cdot}_{s,\O}$ stands indistinctly
%for the norm of the Hilbertian Sobolev spaces $\HsO$, $\HsO^2$ or
%$\mathbb{H}^s(\O)$ for scalar, vectorial and tensorial fields, respectively, with the convention $\H^0(\O):=\LO$, $\H^0(\O)^{2}=\L^2(\O)^2$ and $\mathbb{H}^0(\O):=\mathbb{L}^2(\O)$. We also define for
%$s\geq 0$ the Hilbert space 
%$\mathbb{H}^{s}(\bdiv;\O):=\set{\btau\in\mathbb{H}^s(\O):\ \bdiv\btau\in\HsO^2}$, whose norm
%is given by $\norm{\btau}^2_{\mathbb{H}^s(\bdiv;\O)}
%:=\norm{\btau}_{s,\O}^2+\norm{\bdiv\btau}^2_{s,\O}$.
%Henceforth, we denote by $C$ generic constants independent of the discretization
%parameter, which may take different values at different places.
%
\section{The model problem}
\label{sec:model_problem}

This section is dedicated to describe the model problem in which our method will be based. From the first equation of
\eqref{def:elast_system} we have that
\begin{equation*}
\bdiv\bsig=2\mu\bdiv\boldsymbol{\varepsilon}(\bu)+\lambda\nabla\div\bu=\mu\Delta\bu+(\lambda+\mu)\nabla\div\bu.
\end{equation*}
This allows to rewrite \eqref{def:elast_system} as follows
\begin{equation*}\label{def:elast_system_reduced}
\left\{
\begin{array}{rcll}
\mu\Delta\bu+(\lambda+\mu)\nabla\div\bu & = & -\kappa\bu &  \text{ in } \quad \Omega, \\
\bu & = & \mathbf{0} & \text{ on } \quad \partial\Omega.
\end{array}
\right.
\end{equation*}

Now we introduce the so called pseudostress tensor, defined by
\begin{equation*}
\label{eq:pseudo}
\boldsymbol{\rho}:=\mu\nabla\bu+(\lambda+\mu)\div\bu\mathbb{I}=\mu\nabla\bu+(\lambda+\mu)\tr(\nabla\bu)\mathbb{I}.
\end{equation*}

Observe that $\bdiv\bsig=\bdiv\boldsymbol{\rho}$. Hence, we have the following formulation where the pseudostress and the displacement are the main unknowns: Find $\kappa\in\mathbb{R}$ and $(\boldsymbol{\rho}, \bu)$ such that

\begin{equation}\label{def:elast_system_rho_1}
\left\{
\begin{array}{rcll}
\boldsymbol{\rho} & = &\mu\nabla\bu+(\lambda+\mu)\tr(\nabla\bu)\mathbb{I}&  \text{ in } \quad \Omega, \\
\bdiv\boldsymbol{\rho} & = & -\kappa\bu & \text{ in } \quad \Omega, \\
\bu & = & \mathbf{0} & \text{ on } \quad \partial\Omega.
\end{array}
\right.
\end{equation}

Moreover, the following identity holds (see \cite[Section 2]{MR3453481} for details)
\begin{equation*}
\displaystyle\frac{1}{\mu}\left\{\boldsymbol{\rho}-\frac{\lambda+\mu}{n\lambda+(n+1)\mu}\tr(\boldsymbol{\rho})\mathbb{I} \right\}=\nabla\bu.
\end{equation*}

This leads to the following eigenvalue problem
\begin{equation}\label{def:elast_system_rho}
\left\{
\begin{array}{rcll}
\displaystyle\frac{1}{\mu}\left\{\boldsymbol{\rho}-\frac{\lambda+\mu}{n\lambda+(n+1)\mu}\tr(\boldsymbol{\rho})\mathbb{I} \right\}& = &\nabla\bu&  \text{ in } \quad \Omega, \\
\bdiv\boldsymbol{\rho} & = & -\kappa\bu & \text{ in } \quad \Omega, \\
\bu & = & \mathbf{0} & \text{ on } \quad \partial\Omega.
\end{array}
\right.
\end{equation}

Multiplying the above system with suitable tests functions, integrating by parts and using the boundary condition, we obtain the following variational formulation: Find $\kappa\in\mathbb{R}$ and $\boldsymbol{0}\neq (\boldsymbol{\rho},\bu)\in\mathbb{H}\times \mathbf{Q}$, such that

\begin{equation}\label{def:spectral_1}
\left\{
\begin{array}{rcll}
a(\boldsymbol{\rho},\btau)+b(\btau,\bu) & = &0&  \forall\btau\in\mathbb{H}, \\
b(\boldsymbol{\rho},\bv)& = & -\kappa(\bu,\bv)_{0,\O} &  \forall\bv\in \mathbf{Q},
\end{array}
\right.
\end{equation}
where $\mathbb{H}:=\mathbb{H}(\bdiv;\O)$ and $\mathbf{Q}:=\L^2(\O)^n$ and the bilinear forms $a:\mathbb{H}\times\mathbb{H}\rightarrow \mathbb{R}$ and $b:\mathbb{H}\times \mathbf{Q}\rightarrow\mathbb{R}$ are defined by
\begin{equation*}
\displaystyle a(\bxi,\btau):=\frac{1}{\mu}\int_{\Omega}\bxi:\btau-\frac{\lambda+\mu}{\mu(n\lambda+(n+1)\mu)}\int_{\O}\tr(\bxi)\tr(\btau)\quad\forall\bxi,\btau\in\mathbb{H},
\end{equation*}
and 
\begin{equation*}
b(\btau,\bv):=\int_{\O}\bv\cdot\bdiv\btau\quad\forall\btau\in\mathbb{H},\,\,\forall\bv\in \mathbf{Q}.
\end{equation*}

For $\btau\in\mathbb{H}$ we define its associated deviator tensor by $\btau^{\texttt{d}}:=\btau-\frac{1}{n}\tr(\btau)\mathbb{I}$, which allows us to redefine $a(\cdot,\cdot)$ as follows
\begin{equation}
\label{eq:identity_a}
\displaystyle a(\bxi,\btau):=\frac{1}{\mu}\int_{\Omega}\bxi^{\texttt{d}}:\btau^{\texttt{d}}+\frac{1}{n(n\lambda+(n+1)\mu)}\int_{\O}\tr(\bxi)\tr(\btau)\quad\forall\bxi,\btau\in\mathbb{H}.
\end{equation}

With the purpose of establish the stability of the mixed formulation \eqref{def:spectral_1}, we introduce the following decomposition $\mathbb{H}:=\mathbb{H}_0\oplus \R \mathbb{I}$ where
\begin{equation*}
\mathbb{H}_0:=\left\{\btau\in\mathbb{H}\,:\,\int_{\O}\tr(\btau)=0\right\}.
\end{equation*}
Note that for any $\bxi\in\mathbb{H}$ there exist a unique $\bxi_{0}\in \mathbb{H}_0$ and $d:=\dfrac{1}{n|\O|}\displaystyle\int_{\O}\tr(\bxi)\in\R$ such that $\bxi=\bxi_{0}+d \, \mathbb{I}$.

Observe that in our case due the vanishing Dirichlet condition on \eqref{def:elast_system_rho_1} we have that
$d=0$ (see \cite[Lemma 2.1]{MR3453481}). Hence, we are in position to work indistinctly with $\mathbb{H}$ or $\mathbb{H}_0$.

We invoke the  following result (see \cite[Ch. 4, Proposition 3.1]{MR3097958})
\begin{equation}
\label{eq:des_deviator}
\|\btau\|_{0,\O}^2\leq C \|\btau^{\dd}\|_{0,\O}^2+\|\bdiv\btau\|_{0,\O}^2\quad\forall\btau\in\mathbb{H}_0.
\end{equation}

It is easy to check that $a(\cdot,\cdot)$ and $b(\cdot,\cdot)$ are bounded bilinear forms (see \cite[Theorem 2.1]{MR3453481}). On the other hand, let $\mathbb{V}$ be the kernel of $b(\cdot,\cdot)$ (namely, the inducted operator by this bilinear form), defined by $\mathbb{V}:=\{\btau\in\mathbb{H}_0\,:\,\,\bdiv(\btau)=\boldsymbol{0}\}$. With this space at hand, it is easy to check that there exists $\alpha>0$ such that the following coercivity result holds
\begin{equation*}
a(\btau,\btau)\geq \alpha\|\btau\|_{\bdiv,\O}^2\quad\forall\btau\in\mathbb{V}.
\end{equation*}

On the other hand, the  following inf-sup condition for $b(\cdot,\cdot)$ holds  (see \cite[Theorem 2.1]{MR3453481}), 
\begin{equation}
\label{eq:in_sup_cont}
\displaystyle\sup_{\boldsymbol{0}\neq\btau\in \mathbb{H}_{0}}\frac{b(\btau,\bv)}{\|\btau\|_{\bdiv,\O}}\geq\beta\|\bv\|_{0,\O}\quad\forall\bv\in \mathbf{Q}.
\end{equation}

Hence, according to the Babu\^{s}ka-Brezzi theory (see \cite{MR3097958} for a complete revision about this theory), problem \eqref{def:spectral_1} is well defined. 

All the previous results are sufficient to introduce the so called solution operator that relates the spectral problem   \eqref{def:spectral_1} with its associated source problem. We consider in our work the following operator
\begin{align*}
\bT_{\lambda}:\mathbf{Q}&\rightarrow\mathbf{Q},\\
           \boldsymbol{f}&\mapsto \bT_{\lambda}\boldsymbol{f}:=\widehat{\bu}, 
\end{align*}
where the pair $(\widehat{\boldsymbol{\rho}}, \widehat{\bu})$ is the solution of the following source problem
 \begin{equation}\label{def:sourcel_1}
\left\{
\begin{array}{rcll}
a(\widehat{\boldsymbol{\rho}},\btau)+b(\btau,\widehat{\bu}) & = &0&  \forall\btau\in\mathbb{H}_{0}, \\
b(\widehat{\boldsymbol{\rho}},\bv)& = & - (\boldsymbol{f},\bv)_{0,\O} &  \forall\bv\in \mathbf{Q}.
\end{array}
\right.
\end{equation}

As a consequence of the Babu\^{s}ka-Brezzi theory, we have that $\bT_{\lambda}$ is well defined.
Moreover, it is easy to check that $\bT_{\lambda}$ is self-adjoint with respect to  the $\L^2(\O)$ inner product. Indeed, given
$\boldsymbol{f},\widehat{\boldsymbol{f}}\in\mathbf{Q} $, let $(\widehat{\boldsymbol{\rho}},\widehat{\bu})\in\mathbb{H}_{0}\times\mathbf{Q}$ and $(\widetilde{\boldsymbol{\rho}},\widetilde{\bu})\in\mathbb{H}_{0}\times\mathbf{Q}$  be the solutions to problem \eqref{def:sourcel_1} with right hand sides $\boldsymbol{f}$ and $\widehat{\boldsymbol{f}}$, respectively. Assume that that $\bT_{\lambda}\boldsymbol{f}=\widehat{\bu}$ and $\bT_{\lambda}\widehat{\boldsymbol{f}}=\widetilde{\bu}$.  The symmetry of $a(\cdot,\cdot)$ and $(\cdot,\cdot)_{0,\O}$ implies that
\begin{multline*}
(\boldsymbol{f},\bT_{\lambda}\widehat{\boldsymbol{f}})_{0,\O}=(\boldsymbol{f},\widetilde{\bu})_{0,\O}\\=
-\big(a(\widehat{\boldsymbol{\rho}},\widetilde{\bu})+b(\widetilde{\bu}, \widehat{\boldsymbol{\rho}})+b(\widehat{\boldsymbol{\rho}},\widetilde{\bu}) \big)=
(\widehat{\boldsymbol{f}}, \widehat{\bu})_{0,\O}=(\bT_{\lambda}\boldsymbol{f}, \widehat{\boldsymbol{f}})_{0,\O}.
\end{multline*}

We invoke the following estimate  (\cite[Theorem 2.1 ]{MR3453481}):  there exists constant $C>0$,  independent of $h$ and $\lambda$ such that 
\begin{equation}
\label{eq_cotasupfuente}
\|\widehat{\boldsymbol{\rho}}\|_{\bdiv,\O}+\|\widehat{\bu}\|_{0,\O}\leq C \|\boldsymbol{f}\|_{0,\O}.
\end{equation}

%On the other hand, if $\kappa\bT_{\lambda}\bu =\bu$, $\bu\neq  \mathbf{0}$, there exists $\boldsymbol{\rho} = \bS(\kappa\bu)\in \mathbb{H}_{0}$ such that $(\boldsymbol{\rho},\bu,\kappa) $is an eigenpair of \eqref{def:spectral_H0}. Thus, 
It is direct that $(\kappa, (\boldsymbol{\rho},\bu))\in\mathbb{R}\times \mathbb{H}_{0}\times \mathbf{Q}$ solves \eqref{def:spectral_1} if and only if $(\zeta,\bu)$ is an eigenpair of $\bT_{\lambda}$, i.e.,
\begin{equation*}
\ds \bT_{\lambda}\bu=\zeta\bu\quad\text{with}\,\,\zeta:=\frac{1}{\kappa} \quad\text{and}\,\,\zeta\neq 0.
\end{equation*}

Now we present an additional regularity for the eigenfunctions of $\bT_{\lambda}$ which is derived from the classic regularity results for linear elasticity (see \cite{MR840970}), together with a standard bootstrap argument.  
\begin{lemma}[Regularity of the eigenfunctions]
\label{lmm:add_eigen}
Let $\bu$ be an eigenfunction of $\bT_{\lambda}$ associated to an eigenvalue $\kappa$. Then, 
for all $s\in (0,\ws)$, where $\ws>0$, we have that $\bu\in\H^{1+s}(\Omega)^n$. Also, there exists a constant $C>0$ which in principle depends on $\lambda$, such that
\begin{equation*}
\|\bu\|_{1+s,\O}\leq \widehat{C}\|\bu\|_{0,\O}.
\end{equation*}
\end{lemma}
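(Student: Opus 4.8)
The plan is to identify the eigenfunction with the solution of the primal linear elasticity load problem and then invoke the classical elliptic regularity shift for that operator on the Lipschitz domain $\O$. First I would use the spectral characterization: if $\bT_{\lambda}\bu=\zeta\bu$ with $\zeta=1/\kappa\neq0$, then the pair $(\boldsymbol{\rho},\bu)\in\mathbb{H}_0\times\mathbf{Q}$ representing $\bu$ solves the source problem \eqref{def:sourcel_1} with right-hand side $\bF:=\kappa\bu$, i.e.\ it satisfies the strong system \eqref{def:elast_system_rho_1} with $\bdiv\boldsymbol{\rho}=-\kappa\bu$ in $\O$. Testing the first equation of \eqref{def:spectral_1} against smooth compactly supported tensors and using the identity recalled from \cite[Section 2]{MR3453481}, one obtains $\nabla\bu=\frac1\mu\{\boldsymbol{\rho}-\frac{\lambda+\mu}{n\lambda+(n+1)\mu}\tr(\boldsymbol{\rho})\mathbb{I}\}$ in the distributional sense; since $\boldsymbol{\rho}\in\mathbb{H}(\bdiv;\O)\subset\mathbb{L}^2(\O)$, this gives $\nabla\bu\in\mathbb{L}^2(\O)$, hence $\bu\in\H^1(\O)^n$, and the integration-by-parts identity combined with $a(\boldsymbol{\rho},\btau)+b(\btau,\bu)=0$ for all $\btau\in\mathbb{H}_0$ forces the trace $\bu|_{\partial\O}=\mathbf{0}$, so $\bu\in\H^1_0(\O)^n$. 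Because $\bdiv\bsig=\bdiv\boldsymbol{\rho}$ with $\bsig:=2\mu\boldsymbol{\varepsilon}(\bu)+\lambda\tr(\boldsymbol{\varepsilon}(\bu))\mathbb{I}$, the pair $(\bsig,\bu)$ is a solution of the primal elasticity problem \eqref{def:elast_system} driven by the load $\kappa\bu\in\L^2(\O)^n$.

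Next I would apply the classical regularity theory for the linear elasticity operator with homogeneous Dirichlet conditions on the Lipschitz domain $\O$ (see \cite{MR840970}): there is a threshold $\ws>0$ — equal to $1$ when $\O$ is convex, in general dictated by the strongest corner/edge singularity of $\O$, and for the $\L^2$ datum at hand not larger than $1$ — and, for each $r\in[0,\ws)$, a constant $C>0$ depending on $\O$ and on the Lam\'e coefficients such that a load in $\H^{-1+r}(\O)^n$ produces a solution in $\H^{1+r}(\O)^n$ bounded by $C$ times the $\H^{-1+r}(\O)^n$-norm of the load. The bootstrap then runs as follows: from $\bu\in\L^2(\O)^n$ we have $\kappa\bu\in\L^2(\O)^n\hookrightarrow\H^{-1}(\O)^n$, so the case $r=0$ recovers the $\bu\in\H^1(\O)^n$ obtained above; feeding this back, $\kappa\bu\in\H^1(\O)^n\hookrightarrow\L^2(\O)^n\hookrightarrow\H^{-1+s}(\O)^n$ for every $s\in(0,\ws)$, and a second application of the shift yields $\bu\in\H^{1+s}(\O)^n$ for all such $s$. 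Chaining the a priori bound with $\bF=\kappa\bu$ gives $\|\bu\|_{1+s,\O}\le C\,\|\kappa\bu\|_{-1+s,\O}\le C\,|\kappa|\,\|\bu\|_{0,\O}=:\widehat{C}\,\|\bu\|_{0,\O}$, which is exactly the claimed inequality; note that $\widehat{C}$ inherits from $C$ a possible dependence on $\lambda$.

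I expect the main obstacle to be the regularity shift invoked in the second paragraph rather than any of the algebraic manipulations: one must quote (or establish) the precise positive exponent $\ws$ for the Lam\'e system on a general Lipschitz $\O$ and be careful about whether the associated constant is merely $\lambda$-dependent or genuinely $\lambda$-robust — the statement claims only the former, which is why the dependence of $\widehat{C}$ on $\lambda$ is flagged. A secondary, purely technical point to check carefully is the distributional argument of the first paragraph showing that the mixed first equation of \eqref{def:spectral_1} indeed forces $\nabla\bu\in\mathbb{L}^2(\O)$ and $\bu|_{\partial\O}=\mathbf{0}$, i.e.\ that the eigenfunction is a legitimate datum for the primal problem; once that is in place, the identification of the source problem, the Sobolev embeddings, and the chaining of the estimates are all routine.
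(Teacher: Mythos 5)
Your proposal is correct and follows essentially the same route the paper intends: the paper gives no written proof of Lemma \ref{lmm:add_eigen} beyond the remark that it follows from the classical regularity theory for the Dirichlet Lam\'e system in \cite{MR840970} combined with a standard bootstrap, which is precisely what you carry out (recover the strong primal problem with load $\kappa\bu\in\L^2(\O)^n$ from the mixed formulation, then apply the elliptic shift). Your write-up is in fact more detailed than the paper's, and the only cosmetic redundancy is the second application of the shift, since $\L^2(\O)^n\hookrightarrow\H^{-1+s}(\O)^n$ already makes one application suffice.
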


\begin{remark} \label{daniel0}
Observe that lemma above, in conjunction with the first equation of \eqref{def:elast_system_rho_1}, implies immediately that $\boldsymbol{\rho}\in  \mathbb{H}^{s}(\O)$. On the other hand, for the divergence term, it is enough to consider the second equation in \eqref{def:elast_system_rho} to deduce that $\bdiv\boldsymbol{\rho}\in \H^{1+s}(\O)^{n}$.
\end{remark}

We mention that the dependency of the constants in the regularity exponents and boundedness on $\lambda$ is not completely evident, since in our numerical experiments (cf. Section \ref{sec:numerics}), even in the limit case ($\lambda=\infty$), our method obtains the expected convergence orders. This leads us to consider  the following assumption along our paper:
\begin{assumption}
Constants $\widehat{s}$ and $\widehat{C}$ in Lemma \ref{lmm:add_eigen} are independent of $\lambda$.
\end{assumption}

Finally,  the spectral characterization of $\bT_{\lambda}$ is the following.
\begin{theorem}[Spectral characterization of $\bT_{\lambda}$]
\label{thrm:spec_char_T}
The spectrum of $\bT_{\lambda}$ satisfies $\sp(\bT_{\lambda})=\{0\}\cup\{\zeta_k\}_{k\in\mathbb{N}}$, where $\{\zeta_k\}_{k\in\mathbb{N}}$
is a sequence of real positive eigenvalues which converges to zero, repeated according their respective multiplicities. 
\end{theorem}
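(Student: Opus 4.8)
The plan is to identify $\bT_{\lambda}$ as a compact, self-adjoint and positive definite operator on the Hilbert space $\mathbf{Q}=\LO^n$, and then read off the stated spectral picture from the classical spectral theorem for such operators.

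\textbf{Step 1: compactness.} First I would show that $\bT_{\lambda}$ has range in $\HuO^n$ with a bound controlled by the $\LO$ norm of the datum. Given $\boldsymbol{f}\in\mathbf{Q}$, let $(\widehat{\boldsymbol{\rho}},\widehat{\bu})$ solve \eqref{def:sourcel_1}, so that $\bT_{\lambda}\boldsymbol{f}=\widehat{\bu}$. Estimate \eqref{eq_cotasupfuente} gives $\|\widehat{\boldsymbol{\rho}}\|_{\bdiv,\O}\le C\|\boldsymbol{f}\|_{0,\O}$, and the first equation of \eqref{def:elast_system_rho}, which is the strong form underlying \eqref{def:sourcel_1}, identifies $\nabla\widehat{\bu}=\tfrac1\mu\{\widehat{\boldsymbol{\rho}}-\tfrac{\lambda+\mu}{n\lambda+(n+1)\mu}\tr(\widehat{\boldsymbol{\rho}})\mathbb{I}\}\in\LO$; together with $\widehat{\bu}=\boldsymbol{0}$ on $\DO$ this yields $\widehat{\bu}\in\HuO^n$ with $\|\widehat{\bu}\|_{1,\O}\le C\|\boldsymbol{f}\|_{0,\O}$ (alternatively one may use the sharper $\H^{1+s}$ bound coming from elliptic regularity of the Lam\'e system, in the spirit of Lemma~\ref{lmm:add_eigen}). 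Since $\bT_{\lambda}$ thus factors through the compact embedding $\HuO^n\hookrightarrow\LO^n$ (Rellich--Kondrachov on the bounded Lipschitz domain $\O$), it is compact.

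\textbf{Step 2: self-adjointness and positivity.} Self-adjointness with respect to $(\cdot,\cdot)_{0,\O}$ has already been checked above. For positivity, with $\boldsymbol{f}$ and $(\widehat{\boldsymbol{\rho}},\widehat{\bu})$ as before I would test the second equation of \eqref{def:sourcel_1} with $\bv=\widehat{\bu}$ and the first with $\btau=\widehat{\boldsymbol{\rho}}$ to obtain $(\bT_{\lambda}\boldsymbol{f},\boldsymbol{f})_{0,\O}=(\widehat{\bu},\boldsymbol{f})_{0,\O}=-b(\widehat{\boldsymbol{\rho}},\widehat{\bu})=a(\widehat{\boldsymbol{\rho}},\widehat{\boldsymbol{\rho}})$; by the representation \eqref{eq:identity_a} this equals $\tfrac1\mu\|\widehat{\boldsymbol{\rho}}^{\dd}\|_{0,\O}^2+\tfrac1{n(n\lambda+(n+1)\mu)}\|\tr(\widehat{\boldsymbol{\rho}})\|_{0,\O}^2\ge0$, and it vanishes only if $\widehat{\boldsymbol{\rho}}=\boldsymbol{0}$, whence $\boldsymbol{f}=-\bdiv\widehat{\boldsymbol{\rho}}=\boldsymbol{0}$. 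Hence $\bT_{\lambda}$ is positive definite, and in particular injective.

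\textbf{Step 3: conclusion.} Since $\mathbf{Q}$ is infinite dimensional and $\bT_{\lambda}$ is compact and injective, it cannot have finite rank and $0\in\sp(\bT_{\lambda})$; the spectral theorem for compact self-adjoint operators then gives $\sp(\bT_{\lambda})=\{0\}\cup\{\zeta_k\}_{k\in\mathbb{N}}$ with $\{\zeta_k\}_{k\in\mathbb{N}}$ a real sequence of finite-multiplicity eigenvalues accumulating only at $0$, and Step 2 forces each $\zeta_k>0$ (and $\zeta_k\to0$ since the sequence accumulates only at the origin). I expect the only genuinely substantive point to be the regularity/a priori estimate in Step 1 ensuring $\bT_{\lambda}\mathbf{Q}\subset\HuO^n$; once that is in hand, the rest is a routine invocation of standard compact-operator theory.
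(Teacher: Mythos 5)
Your proof is correct and follows exactly the route the paper implicitly relies on (the theorem is stated there without proof): self-adjointness is already verified in the text, your $\H^1(\O)^n$ a priori bound plus Rellich--Kondrachov gives the compactness the paper later invokes, and the identity $(\bT_{\lambda}\boldsymbol{f},\boldsymbol{f})_{0,\O}=a(\widehat{\boldsymbol{\rho}},\widehat{\boldsymbol{\rho}})$ correctly yields positive definiteness and injectivity, so the classical spectral theorem delivers the stated characterization. The only detail worth recording is that the first equation of \eqref{def:sourcel_1} in fact holds for all $\btau\in\mathbb{H}$ (testing with $d\,\mathbb{I}$ contributes nothing since $\widehat{\boldsymbol{\rho}}\in\mathbb{H}_0$ and $\bdiv(d\,\mathbb{I})=\boldsymbol{0}$), which is what licenses the distributional identification of $\nabla\widehat{\bu}$.
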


It is important to take into account the fact that the coefficient $\lambda$ in the elasticity eigenproblem leads to the analysis of a family of
problems where for every choice of $\lambda$, we solve a different eigenvalue problem. %This is a drawback for the approximation to the eigenvalues, since we need to ensure that the eigenvalues are well separated in order to approximate them correctly. \FL{aca que hay malo? redacci\'on?}

A natural question is what happens with the spectrum of problem \eqref{def:spectral_1} when $\lambda$ goes to infinity. To answer this, we will analyze the limit eigenvalue problem.
\subsection{The limit problem} The elasticity eigenvalue problem has the particularity that 
when $\nu\rightarrow 1/2$, the Lam\'e constant $\lambda\rightarrow+\infty$. This is an interesting 
case, since when $\lambda=+\infty$, the nearly incompressible elasticity eigenvalue problem becomes the perfectly  incompressible elasticity eigenvalue problem and hence, the respective spectrums will converge to each other.

Let us introduce the limit problem: Find $\kappa_{\infty}\in\mathbb{R}$ and $(\boldsymbol{\rho}_{\infty}, \bu_{\infty})\in\mathbb{H}_0\times\mathbf{Q}$ such that
\begin{equation}\label{def:spectral_limit}
\left\{
\begin{array}{rcll}
a(\boldsymbol{\rho}_{\infty},\btau)+b(\btau,\bu_{\infty}) & = &0&  \forall\btau\in\mathbb{H}_{0}, \\
b(\boldsymbol{\rho}_{\infty},\bv)& = & -\kappa_{\infty}(\bu_{\infty},\bv)_{0,\O} &  \forall\bv\in \mathbf{Q}.
\end{array}
\right.
\end{equation}

Let us remark that since $\lambda=\infty$, the bilinear form $a(\cdot,\cdot)$ in \eqref{def:spectral_limit}
consists only in the term $\int_{\O}\boldsymbol{\rho}_{\infty}^{\tD}:\boldsymbol{\tau}^{\tD}$, whereas $b(\cdot,\cdot)$ have no changes on its definition.

Now we are in position to introduce the solution operator associated to \eqref{def:spectral_limit}
\begin{align*}
\bT_{\infty}:\mathbf{Q}&\rightarrow\mathbf{Q},\\
           \boldsymbol{f}&\mapsto \bT_{\infty}\boldsymbol{f}:=\widehat{\bu}_{\infty}, 
\end{align*}
where $(\widehat{\boldsymbol{\rho}}_{\infty},\widehat{\bu}_{\infty})$ is the solution of the following source problem
\begin{equation}\label{def:source_limit}
\left\{
\begin{array}{rcll}
a(\widehat{\boldsymbol{\rho}}_{\infty},\btau)+b(\btau,\widehat{\bu}_{\infty}) & = &0&  \forall\btau\in\mathbb{H}_{0}, \\
b(\widehat{\boldsymbol{\rho}}_{\infty},\bv)& = & (\boldsymbol{f},\bv)_{0,\O} &  \forall\bv\in \mathbf{Q}.
\end{array}
\right.
\end{equation}

Similar to the regularity properties demonstrated for the operator $\bT_{\lambda}$, the following results are reported:
%Reproducing step by step the properties demonstrated for the operator $\bT_{\lambda}$, it can be shown that:
%\begin{itemize}
%\item given $\boldsymbol{f}\in \L^2(\O)^n$, let $(\widetilde{\bu}_{\infty},\widetilde{\bsig}_{\infty})\in \H^1(\Omega)^n\times \mathbb{H}(\bdiv;\Omega)$ be the solution
%of the following problem
%\begin{equation*}
%\left\{
%\begin{array}{rcll}
%-\bdiv\widetilde{\bsig}_{\infty} & = & \boldsymbol{f}&  \text{ in } \quad \Omega, \\
%\dfrac{1}{2\mu}\widetilde{\bsig}_{\infty}^{d} & = &\boldsymbol{\varepsilon}(\widetilde{\bu}_{\infty}) & \text{ in } \quad \Omega, \\
%\widetilde{\bsig}_{\infty} & = & \mathbf{0} & \text{ on } \quad \partial\Omega,\\
%\widetilde{\bu}_{\infty} & = & \mathbf{0} & \text{ on } \quad \partial\Omega.
%\end{array}
%\right.
%\end{equation*}
 Now, using the relation between incompressible elasticity and the Stokes problem and according to  \cite{MR851383}  we conclude that: there exists $\widehat{s}_{\infty}\in (0,1)$ and a constant $\widehat{C}>0$ depending on the domain and $\mu$, such that $\widehat{\bu}_{\infty}\in\H^{1+s}(\Omega)^n$ and
%\begin{equation}
%\label{eq:reg_u_infty}
%\|\widehat{\bu}_{\infty}\|_{1+s,\Omega}\leq\widehat{C}\|\boldsymbol{f}\|_{0,\Omega}\quad\forall s\in (0,\widehat{s}_{\infty}).
%\end{equation}
%O
\begin{equation*}
\label{eq:reg_u_infty}
||\widehat{\boldsymbol{\rho}}_{\infty}\|_{s,\O}+\|\widehat{\bu}_{\infty}\|_{1+s,\Omega}\leq\widehat{C}\|\boldsymbol{f}\|_{0,\Omega}\quad\forall s\in (0,\widehat{s}_{\infty}).
\end{equation*}
Also,  the operator $\bT_{\infty}$ is self-adjoint, well defined and compact implying that its   spectrum consists in a sequence of real eigenvalues $\{\kappa_{\infty_k}\}_{k\in\mathbb{N}}$ that converge to zero.
 
%Notice that $\bT_{\infty}$ is self-adjoint, well defined and compact \FL{aca hay que vero lo del mail de pablo}, implying that its spectrum consists in a sequence of real eigenvalues $\{\kappa_{\infty_k}\}_{k\in\mathbb{N}}$ that converge to zero.

The main result of this section is the following.
\begin{lemma}[convergence of $\bT_{\lambda}$ to $\bT_{\infty}$]
There exists a constant $C>0$ such that
\begin{equation*}
\|(\bT_{\lambda}-\bT_{\infty})\boldsymbol{f}\|_{0,\O}\leq \frac{C}{\lambda}\|\boldsymbol{f}\|_{0,\O}\quad\forall\boldsymbol{f}\in\mathbf{Q}.
\end{equation*}
\end{lemma}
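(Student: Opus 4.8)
The plan is to subtract the two source problems \eqref{def:sourcel_1} and \eqref{def:source_limit} and to exploit that $a(\cdot,\cdot)$ differs from its limit counterpart only by a term of size $O(1/\lambda)$. Set $\gamma_\lambda:=\frac{1}{n(n\lambda+(n+1)\mu)}$ and, recalling \eqref{eq:identity_a}, write $a(\bxi,\btau)=a_\infty(\bxi,\btau)+\gamma_\lambda\int_{\O}\tr(\bxi)\tr(\btau)$, where $a_\infty(\bxi,\btau):=\frac1\mu\int_{\O}\bxi^{\dd}:\btau^{\dd}$ is precisely the bilinear form appearing in \eqref{def:source_limit}; note $\gamma_\lambda\le C/\lambda$. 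Let $(\widehat{\boldsymbol{\rho}},\widehat{\bu})$ and $(\widehat{\boldsymbol{\rho}}_{\infty},\widehat{\bu}_{\infty})$ solve \eqref{def:sourcel_1} and \eqref{def:source_limit} with the same datum $\boldsymbol{f}$. Subtracting the second equations gives $b(\widehat{\boldsymbol{\rho}}-\widehat{\boldsymbol{\rho}}_{\infty},\bv)=0$ for all $\bv\in\mathbf{Q}$, so $\widehat{\boldsymbol{\rho}}-\widehat{\boldsymbol{\rho}}_{\infty}\in\mathbb{V}$; subtracting the first equations and inserting the decomposition of $a$ yields
\[
a_\infty(\widehat{\boldsymbol{\rho}}-\widehat{\boldsymbol{\rho}}_{\infty},\btau)+b(\btau,\widehat{\bu}-\widehat{\bu}_{\infty})=-\gamma_\lambda\int_{\O}\tr(\widehat{\boldsymbol{\rho}})\tr(\btau)\qquad\forall\btau\in\mathbb{H}_{0}.
\]

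First I would estimate $\|\widehat{\boldsymbol{\rho}}-\widehat{\boldsymbol{\rho}}_{\infty}\|_{\bdiv,\O}$. Choosing $\btau=\widehat{\boldsymbol{\rho}}-\widehat{\boldsymbol{\rho}}_{\infty}\in\mathbb{V}$ in the identity above annihilates the $b$-term (because $\bdiv\btau=\boldsymbol{0}$), so $a_\infty(\widehat{\boldsymbol{\rho}}-\widehat{\boldsymbol{\rho}}_{\infty},\widehat{\boldsymbol{\rho}}-\widehat{\boldsymbol{\rho}}_{\infty})=-\gamma_\lambda\int_{\O}\tr(\widehat{\boldsymbol{\rho}})\tr(\widehat{\boldsymbol{\rho}}-\widehat{\boldsymbol{\rho}}_{\infty})$. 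On $\mathbb{V}$ the form $a_\infty$ is coercive with a constant independent of $\lambda$: indeed $a_\infty(\btau,\btau)=\frac1\mu\|\btau^{\dd}\|_{0,\O}^2$ and, since $\bdiv\btau=\boldsymbol{0}$, inequality \eqref{eq:des_deviator} yields $\|\btau\|_{\bdiv,\O}^2=\|\btau\|_{0,\O}^2\le C\|\btau^{\dd}\|_{0,\O}^2$. Combining this with $\big|\int_{\O}\tr(\widehat{\boldsymbol{\rho}})\tr(\btau)\big|\le n\|\widehat{\boldsymbol{\rho}}\|_{0,\O}\|\btau\|_{0,\O}$ and then with the a priori bound \eqref{eq_cotasupfuente} (whose constant does not depend on $\lambda$) gives $\|\widehat{\boldsymbol{\rho}}-\widehat{\boldsymbol{\rho}}_{\infty}\|_{\bdiv,\O}\le C\gamma_\lambda\|\widehat{\boldsymbol{\rho}}\|_{0,\O}\le\frac{C}{\lambda}\|\boldsymbol{f}\|_{0,\O}$.

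Finally I would recover the displacement error from the inf-sup condition \eqref{eq:in_sup_cont}. From the error identity, $b(\btau,\widehat{\bu}-\widehat{\bu}_{\infty})=-a_\infty(\widehat{\boldsymbol{\rho}}-\widehat{\boldsymbol{\rho}}_{\infty},\btau)-\gamma_\lambda\int_{\O}\tr(\widehat{\boldsymbol{\rho}})\tr(\btau)$; using the boundedness of $a_\infty$, the bound just obtained for $\|\widehat{\boldsymbol{\rho}}-\widehat{\boldsymbol{\rho}}_{\infty}\|_{\bdiv,\O}$, the estimate $\gamma_\lambda\|\widehat{\boldsymbol{\rho}}\|_{0,\O}\le\frac{C}{\lambda}\|\boldsymbol{f}\|_{0,\O}$, and $\|\btau\|_{0,\O}\le\|\btau\|_{\bdiv,\O}$, one gets $|b(\btau,\widehat{\bu}-\widehat{\bu}_{\infty})|\le\frac{C}{\lambda}\|\boldsymbol{f}\|_{0,\O}\|\btau\|_{\bdiv,\O}$ for every $\btau\in\mathbb{H}_{0}$. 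Dividing by $\|\btau\|_{\bdiv,\O}$, taking the supremum and invoking \eqref{eq:in_sup_cont} gives $\|\widehat{\bu}-\widehat{\bu}_{\infty}\|_{0,\O}\le\frac{C}{\lambda}\|\boldsymbol{f}\|_{0,\O}$, which is the claim since $(\bT_{\lambda}-\bT_{\infty})\boldsymbol{f}=\widehat{\bu}-\widehat{\bu}_{\infty}$. The argument is essentially routine; the only point demanding care is checking that \emph{all} the constants used — the one in \eqref{eq:des_deviator}, the norm of $a_\infty$, the inf-sup constant $\beta$, and the constant in \eqref{eq_cotasupfuente} — are independent of $\lambda$, which is exactly what the quoted results provide (and $a_\infty$ itself does not involve $\lambda$), so nothing further is needed there.
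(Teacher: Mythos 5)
Your proposal is correct and follows essentially the same route as the paper: subtract the two source problems, test the first error equation with $\widehat{\boldsymbol{\rho}}-\widehat{\boldsymbol{\rho}}_{\infty}$ (which lies in $\mathbb{V}$ since the divergences coincide), control the trace term by $O(1/\lambda)\|\boldsymbol{f}\|_{0,\O}$ via \eqref{eq_cotasupfuente}, upgrade the deviatoric bound to the full norm using \eqref{eq:des_deviator}, and recover $\|\widehat{\bu}-\widehat{\bu}_{\infty}\|_{0,\O}$ from the inf-sup condition \eqref{eq:in_sup_cont}. The only difference is presentational (you isolate $a_\infty$ and the $\gamma_\lambda$-term explicitly), and your attention to the $\lambda$-independence of all constants matches what the paper relies on.
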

\begin{proof}
Let $\boldsymbol{f}\in\mathbf{Q}$ and let $\bu:=\bT_{\lambda}\boldsymbol{f}$ and $\bu_{\infty}:=\bT_{\infty}\boldsymbol{f}$.
Subtracting problems \eqref{def:sourcel_1} and \eqref{def:source_limit} we have
\begin{align*}
\frac{1}{\mu}\int_{\O}(\boldsymbol{\rho}^{\tD}-\boldsymbol{\rho}_{\infty}^{\tD}):\btau+\frac{1}{n(n\lambda+(n+1)\mu)}\int_{\O}\tr(\boldsymbol{\rho})\tr(\btau)+\int_{\O}\bdiv\btau\cdot(\bu-\bu_{\infty})&=0,\\
\int_{\O}\bdiv(\boldsymbol{\rho}-\boldsymbol{\rho}_{\infty})\cdot\bv&=0,
\end{align*}
for all $\btau\in\mathbb{H}_0$ and for all $\bv\in\mathbf{Q}$.

Set $\btau=\boldsymbol{\rho}-\boldsymbol{\rho}_{\infty}$ and $\bv=\bu-\bu_{\infty}$ in problem above. Then we have
\begin{multline}
\label{eq:limit_rho}
\displaystyle\frac{1}{\mu}\|\boldsymbol{\rho}^{\tD}-\boldsymbol{\rho}^{\tD}_{\infty}\|^2_{0,\O}=-\frac{1}{n(n\lambda+(n+1)\mu)}\int_{\O}\tr(\boldsymbol{\rho})\tr(\boldsymbol{\rho}-\boldsymbol{\rho}_{\infty})\\
\leq \frac{1}{n\lambda}\|\boldsymbol{\rho}\|_{0,\O}\|\boldsymbol{\rho}-\boldsymbol{\rho}_{\infty}\|_{0,\O},
\end{multline}
where we have used the inequality $\|\tr(\btau)\|_{0,\O}\leq \sqrt{n}\|\btau\|_{0,\O}$. Since $\boldsymbol{\rho}$ solves \eqref{def:sourcel_1}, we have that there exists a constant $C>0$ such that $\|\boldsymbol{\rho}\|_{0,\O}\leq C\|\boldsymbol{f}\|_{0,\O}$. Replacing this in \eqref{eq:limit_rho} we have
\begin{equation}
\label{eq:bound_deviators}
\displaystyle\frac{1}{\mu}\|\boldsymbol{\rho}^{\tD}-\boldsymbol{\rho}^{\tD}_{\infty}\|^2_{0,\O}\leq \frac{C}{n\lambda}\|\boldsymbol{f}\|_{0,\O}\|\boldsymbol{\rho}-\boldsymbol{\rho}_{\infty}\|_{0,\O}.
\end{equation}
From \eqref{eq:des_deviator} we have
\begin{equation*}
\|\boldsymbol{\rho}-\boldsymbol{\rho}_{\infty}\|^2\leq C\|\boldsymbol{\rho}^{\tD}-\boldsymbol{\rho}_{\infty}^{\tD}\|_{0,\O}^2+\|\bdiv(\boldsymbol{\rho}-\boldsymbol{\rho}_{\infty})\|_{0,\O}^2,
\end{equation*}
which implies immediately  that $\|\boldsymbol{\rho}-\boldsymbol{\rho}_{\infty}\|_{0,\O}\leq\|\boldsymbol{\rho}^{\tD}-\boldsymbol{\rho}_{\infty}^{\tD}\|_{0,\O}$. Replacing this in \eqref{eq:bound_deviators} leads to 
\begin{equation}
\label{eq:cota_desv_f}
\displaystyle\frac{1}{\mu}\|\boldsymbol{\rho}^{\tD}-\boldsymbol{\rho}^{\tD}_{\infty}\|_{0,\O}\leq \frac{C}{n\lambda}\|\boldsymbol{f}\|_{0,\O}.
\end{equation}

On the other hand, from the inf-sup condition \eqref{eq:in_sup_cont}, Cauchy-Schwarz inequality and \eqref{eq:cota_desv_f} we obtain
\begin{align*}
%\label{eq:in_sup_cont}
&\beta\|\bu-\bu_{\infty}\|_{0,\O}\leq\displaystyle\sup_{\boldsymbol{0}\neq\btau\in \mathbb{H}_{0}}\frac{b(\btau,\bu-\bu_{\infty})}{\|\btau\|_{\bdiv,\O}}\\
&=\displaystyle\sup_{\boldsymbol{0}\neq\btau\in \mathbb{H}_{0}}\frac{-a(\boldsymbol{\rho}-\boldsymbol{\rho}_{\infty}, \btau)}{\|\btau\|_{\bdiv,\O}}\\
&=\displaystyle\sup_{\boldsymbol{0}\neq\btau\in \mathbb{H}_{0}}\frac{\displaystyle-\frac{1}{\mu}\int_{\O}(\boldsymbol{\rho}-\boldsymbol{\rho}_{\infty})^{\tD}:\btau^{\tD}-\frac{1}{n(n\lambda+(n+1)\mu)}\int_{\O}\tr(\boldsymbol{\rho}-\boldsymbol{\rho}_{\infty})\tr(\btau)}{\|\btau\|_{\bdiv,\O}}\\
&\leq \displaystyle\sup_{\boldsymbol{0}\neq\btau\in \mathbb{H}_{0}}\frac{\displaystyle\frac{1}{\mu}\|(\boldsymbol{\rho}-\boldsymbol{\rho}_{\infty})^{\tD}\|_{0,\O}\|\btau^{\tD}\|_{0,\O}+\frac{1}{n(n\mu+\mu)}\|\tr(\boldsymbol{\rho}-\boldsymbol{\rho}_{\infty})\|_{0,\O}\|\tr(\btau)\|_{0,\O}}{\|\btau\|_{\bdiv,\O}}\\
&\leq \displaystyle\sup_{\boldsymbol{0}\neq\btau\in \mathbb{H}_{0}}\frac{\displaystyle\frac{C\left(n+\sqrt{n}\right)}{n^2\lambda}\|\boldsymbol{f}\|_{0,\O}\|\btau\|_{0,\O}+\frac{1}{n(n+1)}\frac{C}{n\lambda}\|\boldsymbol{f}\|_{0,\O}\|\btau\|_{0,\O}}{\|\btau\|_{\bdiv,\O}}\\
&\leq \frac{C_*}{\lambda}\|\boldsymbol{f}\|_{0,\O},
\end{align*}
where $\displaystyle C_*=\frac{C\left(1+n+\sqrt{n}\right)}{n^2}$.  This concludes the proof.
\end{proof}

We end this section presenting a well known consequence of the convergence in norm established in the previous lemma (see \cite{ MR1115235}, for instance).
\begin{theorem}
Let $\xi_{\infty}>0$ be an eigenvalue of $\bT_{\infty}$ of multiplicity $m$. Let $D$ be any disc of the complex plane centered at  $\xi_{\infty}$ containing no other element of the spectrum of $\bT_{\infty}.$ Then, for $\lambda$ large enough, $D$ contains exactly $m$ eigenvalues of $\bT_{\lambda}$ (repeated according to their respective multiplicities). Consequently, each eigenvalue $\xi_{\infty}>0$ of $\bT_{\infty}$ is a limit of eigenvalues $\xi$ of $\bT_{\lambda}$, as $\lambda$ goes to infinity.
\end{theorem}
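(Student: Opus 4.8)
The plan is to deduce this statement as a direct application of the classical spectral perturbation theory for families of operators converging in norm, exactly as packaged in \cite{MR1115235}. The key input is already in hand: the preceding lemma gives $\|\bT_\lambda - \bT_\infty\|_{\mathcal L(\mathbf Q)} \le C/\lambda \to 0$ as $\lambda \to \infty$, together with the fact (recorded above) that each $\bT_\lambda$ and $\bT_\infty$ is a self-adjoint compact operator on $\mathbf Q = \L^2(\O)^n$. Since $\xi_\infty > 0$ is an isolated eigenvalue of $\bT_\infty$ of finite multiplicity $m$ (isolation being automatic because the nonzero spectrum of a compact operator consists of isolated eigenvalues), one can fix a circle $\partial D$ around $\xi_\infty$ separating it from the rest of $\sp(\bT_\infty)$ and from $0$.

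The steps, in order, would be: First, record that $\partial D$ lies in the resolvent set of $\bT_\infty$, so that $\dist(\partial D, \sp(\bT_\infty)) =: \rho_0 > 0$; by the standard resolvent estimate $\|(z - \bT_\infty)^{-1}\| \le 1/\rho_0$ uniformly for $z \in \partial D$. Second, invoke the norm convergence of the previous lemma: for $\lambda$ large enough that $C/\lambda < \rho_0$, a Neumann-series argument shows $z - \bT_\lambda$ is invertible for every $z \in \partial D$ with $\|(z - \bT_\lambda)^{-1}\|$ bounded uniformly in $z$ and $\lambda$, and moreover $(z - \bT_\lambda)^{-1} \to (z - \bT_\infty)^{-1}$ in operator norm, uniformly on $\partial D$. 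Third, introduce the Riesz spectral projectors $E_\lambda := \frac{1}{2\pi i}\int_{\partial D}(z - \bT_\lambda)^{-1}\,dz$ and $E_\infty := \frac{1}{2\pi i}\int_{\partial D}(z - \bT_\infty)^{-1}\,dz$; integrating the uniform resolvent convergence over the compact contour $\partial D$ yields $\|E_\lambda - E_\infty\| \to 0$. Fourth, use the elementary fact that two projections at operator-norm distance strictly less than $1$ have equal rank; hence for $\lambda$ large, $\dim \operatorname{Range}(E_\lambda) = \dim \operatorname{Range}(E_\infty) = m$. Since $\operatorname{Range}(E_\lambda)$ is precisely the invariant subspace of $\bT_\lambda$ associated with the part of $\sp(\bT_\lambda)$ inside $D$, this says $D$ contains exactly $m$ eigenvalues of $\bT_\lambda$ counted with multiplicity. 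Finally, let $D$ shrink toward $\xi_\infty$: for each radius we obtain eigenvalues of $\bT_\lambda$ in the disc once $\lambda$ is large, which gives the last sentence, namely that $\xi_\infty$ is a limit of eigenvalues of $\bT_\lambda$.

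The only genuine content beyond bookkeeping is the uniform resolvent bound on $\partial D$, and it is standard: for $z \in \partial D$ write $z - \bT_\lambda = (z - \bT_\infty)\bigl(I - (z - \bT_\infty)^{-1}(\bT_\lambda - \bT_\infty)\bigr)$; the second factor is invertible by Neumann series as soon as $\|(z - \bT_\infty)^{-1}\|\,\|\bT_\lambda - \bT_\infty\| \le (C/\lambda)/\rho_0 < 1$, with inverse bounded by $(1 - (C/\lambda)/\rho_0)^{-1} \le 2$ for $\lambda$ large, whence $\|(z - \bT_\lambda)^{-1}\| \le 2/\rho_0$ and $\|(z - \bT_\lambda)^{-1} - (z - \bT_\infty)^{-1}\| \le (2/\rho_0^2)(C/\lambda)$. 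I expect the main (very mild) obstacle to be purely expository: making clear that compactness of $\bT_\infty$ is what guarantees $\xi_\infty$ is isolated with finite-dimensional eigenspace, so that the disc $D$ in the statement exists, and that the whole argument is simply a citation of \cite[Section~2]{MR1115235} or \cite{MR1115235} applied to the pair $(\bT_\lambda, \bT_\infty)$. In the write-up it therefore suffices to verify the hypotheses of that reference and invoke it.
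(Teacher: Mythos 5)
Your argument is correct and is essentially the same as the paper's, which simply presents this theorem as a well-known consequence of the norm convergence $\|\bT_{\lambda}-\bT_{\infty}\|\le C/\lambda$ together with the classical spectral approximation theory of \cite{MR1115235}. You have merely unpacked the standard resolvent/Riesz-projector argument that underlies that citation, so no further changes are needed.
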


In what follows, and only for simplify notations,  we will drop the subindex $\lambda$ to denote the solution operator.

\section{The mixed finite element method}
\label{sec:mixed_method}
The present section deals with the finite element approximation for the eigenvalue 
problem.  To do this task, we begin by introducing a regular family of triangulations of $\bar{\O}\subset\mathbb{R}^n$ denoted by $\{\CT_h\}_{h>0}$. Let $h_T$ the diameter of a triangle/tetrahedron $T\subset\CT_{h}$ and let us define $h:=\max\{h_T\,:\, T\in \CT_h\}$.
\subsection{The finite element spaces}
Given an integer $\ell\geq 0$ and a subset $D$ of $\mathbb{R}^n$, we denote by $\mathbb{P}_\ell(S)$ the space of polynomials of degree at most $\ell$ defined in $D$. We mention that, for tensorial fields we will define $\mathbf{P}_\ell(D):=[\mathbb{P}_\ell(D)]^{n\times n}$ and for vector fields $P_\ell(D):=[\mathbb{P}_\ell(D)]^n$.
With these ingredients at hand, for $k\geq 0$ we define the local Raviart-Thomas space of order $k$
 as follows  (see \cite{MR3097958})
 \begin{equation*}
 \mathbf{RT}_k(T)=[\mathbf{P}_k(T)]\oplus P_k(T)\boldsymbol{x},
 \end{equation*}
 where $\boldsymbol{x}\in\mathbb{R}^n$. With this local space, we define the global Raviart-Thomas space, which we denote by $\mathbb{RT}_k(\CT_h)$, as follows
 \begin{equation*}
 \mathbb{RT}_k(\CT_h):=\{\btau\in\mathbb{H}\,:\,(\tau_{i1},\cdots,\tau_{in})^{\texttt{t}}\in\mathbf{RT}_k(T)\,\,\forall i\in\{1,\ldots,n\},\,\,\forall T\in\CT_h\},
 \end{equation*}
 and we introduce the global space of piecewise polynomials of degree $\leq k$ defined by
 \begin{equation*}
 P_k(\CT_h):=\{\bv\in \L^2(\O)\,:\, v|_T\in P_k(T)\,\,\,\,\forall T\in\CT_h\}.
 \end{equation*}
 
 Also, we define
 \begin{equation*}
 \mathbb{H}_{h,0}:=\left\{ \btau_h\in\mathbb{RT}_k(\CT_h)\,\,:\,\,\int_{\O}\tr(\btau_h)=0  \right\},
 \end{equation*}
 and $\mathbf{Q}_h:=\mathbf{P}_k(\CT_h)$. 
 
Now we recall some well known approximation properties for the spaces defined above (see \cite{MR2009375} for instance).  Let $\bPi_h^k:\mathbb{H}^t (\O)\rightarrow \mathbb{RT}_k(\CT_h)$ be the Raviart-Thomas interpolation operator. For $t\in (0,1]$ and $\btau\in\mathbb{H}^t(\O)\cap\mathbb{H}(\bdiv;\O)$ the following error estimate holds true
 \begin{equation} \label{daniel1}
 \|\btau-\bPi_h^k\btau\|_{0,\O}\leq Ch^t \big(\|\btau\|_{t,\O}+\|\bdiv\btau\|_{0,\O}\big).
 \end{equation}
 
 Also, for $\btau\in\mathbb{H}^t(\O)$ with $t>1/2$, there holds
 \begin{equation}\label{daniel2}
 \|\btau-\bPi_h^k\btau\|_{0,\O}\leq Ch^{\min\{t,k+1\}} |\btau|_{t,\O}.
 \end{equation} 
 
 Let $\mathcal{P}_h^k:\L^2(\O)^n\rightarrow\mathbf{Q}_h$ be the $\L^2(\O)$-orthogonal projector. As a first property, we have the following commutative diagram
 \begin{equation}
 \label{eq:commutative}
 \bdiv(\bPi_h^k\btau)=\mathcal{P}_h^k(\bdiv\btau).
 \end{equation}
 
 If $\bv\in\H^t (\O)^{n}$ with $t>0$, there holds
 \begin{equation}\label{daniel3}
 \|\bv-\mathcal{P}_h^k\bv\|_{0,\O}\leq Ch^{\min\{t,k+1 \}} |\bv|_{t,\O}.
 \end{equation}
 
 Finally, for each $\btau\in\mathbb{H}^t(\O)$ such that $\bdiv\btau\in\H^t (\O)^{n}$, there holds
 \begin{equation} \label{daniel4}
 \|\bdiv(\btau-\bPi_h^k\btau)\|_{0,\O}\leq Ch^{\min\{t,k+1\}} |\bdiv\btau|_{t,\O}.
 \end{equation}

 \subsection{The discrete mixed eigenvalue problem}
Now we introduce the finite element discretization of \eqref{def:spectral_1}, which  reads as follows:
 Find $\kappa_h\in\mathbb{R}$ and $(\boldsymbol{\rho}_h,\bu_h)\in \mathbb{H}_{h,0}\times \mathbf{Q}_h $ such that
 
 \begin{equation}\label{def:spectral_1h}
\left\{
\begin{array}{rcll}
a(\boldsymbol{\rho}_h,\btau_h)+b(\btau_h,\bu_h) & = &0&  \forall\btau_h\in\mathbb{H}_{h,0}, \\
b(\boldsymbol{\rho}_h,\bv_h)& = & -\kappa_h(\bu_h,\bv_h)_{0,\O} &  \forall\bv_h\in \mathbf{Q}_h.
\end{array}
\right.
\end{equation}

We introduce the discrete kernel of $b(\cdot,\cdot)$ as follows
\begin{equation*}
\mathbb{V}_h:=\{\btau_h\in\mathbb{H}_{0,h}\,:\, b(\btau_h,\bv_h)=0\,\,\forall\bv_h\in\mathbf{Q}_h\}\subset\mathbb{V}.
%=\{\btau_h\in\mathbb{H}_{0,h}\,:\, \bdiv\btau_h=\boldsymbol{0}\,\,\text{in}\,\O\}\subset\mathbb{V}.
\end{equation*}

It is clear that  $a(\cdot,\cdot)$ is elliptic in this space, i.e, there exists a positive constant $\overline{\alpha}$, independent of $h$,
such that
\begin{equation}
\label{eq:elipt_h}
a(\btau_h,\btau_h)\geq \overline{\alpha}\|\btau_h\|^2_{\bdiv,\O}\quad\forall \btau_h\in \mathbb{V}_h.
\end{equation}

Also, the following inf-sup condition holds (see \cite[Lemma 3.1]{MR3453481})
\begin{equation}
\label{eq_newinf-sup}
\displaystyle\sup_{\boldsymbol{0}\neq\btau_h\in \mathbb{H}_{0,h}}\frac{b(\btau_h,\bv_h)}{\|\btau_h\|_{\bdiv,\O}}\geq\beta\|\bv_h\|_{0,\O}\quad\forall\bv_h\in \mathbf{Q}_h,
\end{equation}
where $\beta>0$ is independent of $h$. 

Now, we introduce the discrete counterpart of $\bT$ 
\begin{align*}
\bT_{h}: \mathbf{Q}&\rightarrow\mathbf{Q}_h,\\
           \boldsymbol{f}&\mapsto \bT_h\boldsymbol{f}:= \widehat{\bu}_h, 
\end{align*}
%\GR{and
%\begin{align*}
%\bS_{h}: \mathbf{Q}&\rightarrow\mathbb{H}_{h,0},\\
%              \boldsymbol{f}&\mapsto \bS_h   \boldsymbol{f}= \widehat{\boldsymbol{\rho}}_h, 
%\end{align*}}
where the pair $(\widehat{\boldsymbol{\rho}}_h, \widehat{\bu}_h)$ is the solution of the following source problem
 \begin{equation}\label{def:sourcel_1h}
\left\{
\begin{array}{rcll}
a(\widehat{\boldsymbol{\rho}}_h,\btau_h)+b(\btau_h,\widehat{\bu}_h) & = &0&  \forall\btau_h\in\mathbb{H}_{h,0}, \\
b(\widehat{\boldsymbol{\rho}}_h,\bv_h)& = & - (\boldsymbol{f},\bv_h)_{0,\O} &  \forall\bv_h\in \mathbf{Q}_h.
\end{array}
\right.
\end{equation}

Applying the Babu\^{s}ka-Brezzi theory, we have that the discrete operator $\bT_{h}$ is well defined and from \cite[Theorem 3.1]{MR3453481}, the following estimate holds 
\begin{equation*}
\|\widehat{\boldsymbol{\rho}}_h\|_{\bdiv,\O}+\|\widehat{\bu}_h\|_{0,\O}\leq C \|\boldsymbol{f}\|_{0,\O},
\end{equation*}
with $C>0$, independent of $h$ and $\lambda$.

\section{Convergence and Error estimates}
\label{sec:error}
We begin this section recalling some definitions of spectral theory. Let $\mathcal{X}$ be a generic Hilbert space and let $\bS$ be a linear bounded operator defined by $\bS:\mathcal{X}\rightarrow\mathcal{X}$. If $\boldsymbol{I}$ represents the identity operator, the spectrum of $\bS$ is defined by $\sp(\bS):=\{z\in\mathbb{C}:\,\,(z\boldsymbol{I}-\bS)\,\,\text{is not invertible} \}$ and the resolvent is its complement $\rho(\bS):=\mathbb{C}\setminus\sp(\bS)$. For any $z\in\rho(\bS)$, we define the resolvent operator of $\bS$ corresponding to $z$ by $R_z(\bS):=(z\boldsymbol{I}-\bS)^{-1}:\mathcal{X}\rightarrow\mathcal{X}$. 

Also, if $\mathcal{X}$ and $\mathcal{Y}$ are vectorial fields, we denote by $\mathcal{L}(\mathcal{X},\mathcal{Y})$ the space of all the linear and bounded operators acting from $\mathcal{X}$ to $\mathcal{Y}$.

The purpose of this section is to analyze the convergence of the mixed method and derive error estimates for the eigenvalues and eigenfunctions. Due the compactness of $\bT$, the convergence of the eigenvalues is derived from the classic theory of \cite{MR1115235}. 

The following result, which is a consequence of the convergence in norm between $\bT$ and $\bT_{h}$, reveals the convergence between the continuous and discrete solution operators.

\begin{lemma}
\label{lemma:P1}
Let $\boldsymbol{f}\in\mathbf{Q}$. There holds
\begin{equation*}
\|(\bT-\bT_h)\boldsymbol{f}\|_{0,\O}\leq C h^{\min\{s,k+1\}}\|\boldsymbol{f}\|_{0,\O},
\end{equation*}
%\GR{\begin{equation*}
%\|(\bS-\bS_h)\boldsymbol{f}\|_{0,\O}+\|(\bT-\bT_h)\boldsymbol{f}\|_{0,\O}\leq C h^{\min\{s,k+1\}}\|\boldsymbol{f}\|_{0,\O},
%\end{equation*}
%}
where the positive constant $C$ is independent of $h$ and $\lambda$.
\end{lemma}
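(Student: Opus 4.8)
The plan is to derive the $L^2$ estimate for $(\bT-\bT_h)\bF$ from the standard mixed finite element error analysis (Babuška–Brezzi) applied to the source problems \eqref{def:sourcel_1} and \eqref{def:sourcel_1h}, combined with the regularity provided by Lemma \ref{lmm:add_eigen} and Remark \ref{daniel0}, and the approximation properties \eqref{daniel1}–\eqref{daniel4}. Write $(\widehat{\boldsymbol{\rho}},\widehat{\bu}):=$ solution of \eqref{def:sourcel_1} (so $\bT\bF=\widehat\bu$) and $(\widehat{\boldsymbol{\rho}}_h,\widehat{\bu}_h):=$ solution of \eqref{def:sourcel_1h} (so $\bT_h\bF=\widehat\bu_h$). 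Since $\bF\in\mathbf Q$ and the source problem here plays the role of the resolvent at $0$, by the regularity of the elasticity problem (Lemma \ref{lmm:add_eigen} applied to the load problem, equivalently the bootstrap there) we have $\widehat\bu\in\H^{1+s}(\O)^n$, $\widehat{\boldsymbol{\rho}}\in\mathbb H^s(\O)$ and $\bdiv\widehat{\boldsymbol{\rho}}=-\bF$, with $\|\widehat{\boldsymbol{\rho}}\|_{s,\O}+\|\widehat\bu\|_{1+s,\O}\le C\|\bF\|_{0,\O}$, the constant independent of $\lambda$ by the Assumption.

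The core step is the Céa-type estimate in the natural mixed norm. Using the discrete ellipticity \eqref{eq:elipt_h} on $\mathbb V_h$, the discrete inf-sup condition \eqref{eq_newinf-sup}, the commuting diagram \eqref{eq:commutative} (which gives $\bdiv(\bPi_h^k\widehat{\boldsymbol{\rho}}-\widehat{\boldsymbol{\rho}}_h)=0$ because $\bdiv\widehat{\boldsymbol{\rho}}=-\bF$ and $\mathcal P_h^k\bF\in\mathbf Q_h$ is tested against — more precisely, $\bPi_h^k\widehat{\boldsymbol{\rho}}-\widehat{\boldsymbol{\rho}}_h\in\mathbb V_h$), and the standard argument, one obtains
\[
\|\widehat{\boldsymbol{\rho}}-\widehat{\boldsymbol{\rho}}_h\|_{\bdiv,\O}+\|\widehat\bu-\widehat\bu_h\|_{0,\O}
\le C\Bigl(\|\widehat{\boldsymbol{\rho}}-\bPi_h^k\widehat{\boldsymbol{\rho}}\|_{\bdiv,\O}+\|\widehat\bu-\mathcal P_h^k\widehat\bu\|_{0,\O}\Bigr),
\]
with $C$ independent of $h$ and $\lambda$ (this is essentially \cite[Theorem 3.1]{MR3453481}, whose constant is $\lambda$-independent). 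Then \eqref{daniel1}–\eqref{daniel4} together with Remark \ref{daniel0} ($\bdiv\widehat{\boldsymbol{\rho}}=-\bF\in\H^{1}(\O)^n$ in fact, but certainly $\bdiv\widehat{\boldsymbol{\rho}}=-\bF$ is controlled by $\|\bF\|_{0,\O}$, and $\widehat{\boldsymbol{\rho}}\in\mathbb H^s$) bound each interpolation term by $Ch^{\min\{s,k+1\}}(\|\widehat{\boldsymbol{\rho}}\|_{s,\O}+\|\bdiv\widehat{\boldsymbol{\rho}}\|_{0,\O}+\|\widehat\bu\|_{1+s,\O})\le Ch^{\min\{s,k+1\}}\|\bF\|_{0,\O}$. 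In particular $\|\widehat\bu-\widehat\bu_h\|_{0,\O}=\|(\bT-\bT_h)\bF\|_{0,\O}\le Ch^{\min\{s,k+1\}}\|\bF\|_{0,\O}$, which is the claim.

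The main technical obstacle is tracking the $\lambda$-independence of all constants: the ellipticity constant $\overline\alpha$ in \eqref{eq:elipt_h} and the inf-sup constant $\beta$ in \eqref{eq_newinf-sup} are stated as $h$- and $\lambda$-independent, and the bound \eqref{eq_cotasupfuente} on $\|\widehat{\boldsymbol{\rho}}\|_{\bdiv,\O}$ is $\lambda$-independent; the only delicate point is the regularity constant $\widehat C$, which is handled by the Assumption. A secondary point requiring a little care is verifying $\bPi_h^k\widehat{\boldsymbol{\rho}}-\widehat{\boldsymbol{\rho}}_h\in\mathbb V_h$: for any $\bv_h\in\mathbf Q_h$, $b(\bPi_h^k\widehat{\boldsymbol{\rho}}-\widehat{\boldsymbol{\rho}}_h,\bv_h)=\int_\O\bv_h\cdot(\mathcal P_h^k\bdiv\widehat{\boldsymbol{\rho}}-\bdiv\widehat{\boldsymbol{\rho}}_h)=\int_\O\bv_h\cdot(\mathcal P_h^k(-\bF)+\bF)=0$ by \eqref{eq:commutative}, the second equation of \eqref{def:sourcel_1h}, and the definition of $\mathcal P_h^k$; one also needs $\bPi_h^k\widehat{\boldsymbol{\rho}}\in\mathbb H_{h,0}$, which follows since $\int_\O\tr(\bPi_h^k\widehat{\boldsymbol{\rho}})$ relates to $\int_\O\tr\widehat{\boldsymbol{\rho}}=0$ — or, if it does not vanish exactly, one subtracts the (small, or in fact zero by \cite[Lemma 2.1]{MR3453481}) mean correction, which is controlled by the same interpolation error. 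Everything else is routine substitution of the approximation estimates.
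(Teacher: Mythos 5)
Your overall strategy coincides with the paper's: triangle inequalities through $\mathcal P_h^k\widehat\bu$ and $\bPi_h^k\widehat{\boldsymbol{\rho}}$, the discrete inf-sup condition \eqref{eq_newinf-sup}, the ellipticity \eqref{eq:elipt_h} on $\mathbb V_h$ together with the commuting diagram \eqref{eq:commutative} (which you correctly use to show $\bPi_h^k\widehat{\boldsymbol{\rho}}-\widehat{\boldsymbol{\rho}}_h\in\mathbb V_h$), and finally the approximation properties plus regularity. However, the intermediate estimate you actually state — quasi-optimality with $\|\widehat{\boldsymbol{\rho}}-\bPi_h^k\widehat{\boldsymbol{\rho}}\|_{\bdiv,\O}$ on the right-hand side — does not deliver the claimed rate, and this is a genuine gap. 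Indeed, by \eqref{eq:commutative},
\begin{equation*}
\|\bdiv(\widehat{\boldsymbol{\rho}}-\bPi_h^k\widehat{\boldsymbol{\rho}})\|_{0,\O}=\|\bdiv\widehat{\boldsymbol{\rho}}-\mathcal P_h^k(\bdiv\widehat{\boldsymbol{\rho}})\|_{0,\O}=\|\boldsymbol{f}-\mathcal P_h^k\boldsymbol{f}\|_{0,\O},
\end{equation*}
and for a generic $\boldsymbol{f}\in\mathbf Q=\L^2(\O)^n$ this quantity is only bounded by $\|\boldsymbol{f}\|_{0,\O}$; it carries no power of $h$ (your parenthetical claim that $\bdiv\widehat{\boldsymbol{\rho}}=-\boldsymbol{f}\in\H^1(\O)^n$ is false for general data, and "controlled by $\|\boldsymbol{f}\|_{0,\O}$" is not enough). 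So the full $\H(\bdiv)$-norm Céa bound gives at best $O(1)$, not $O(h^{\min\{s,k+1\}})$, and would not even yield convergence in operator norm uniformly over the unit ball of $\mathbf Q$.

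The repair is exactly what the paper does, and you already hold all the needed pieces: never let the divergence of the interpolation error enter. Concretely, the inf-sup argument combined with the first equations of \eqref{def:sourcel_1} and \eqref{def:sourcel_1h} gives $\|\mathcal P_h^k\widehat\bu-\widehat\bu_h\|_{0,\O}\le C\|\widehat{\boldsymbol{\rho}}-\widehat{\boldsymbol{\rho}}_h\|_{0,\O}$ (only the $\L^2$ norm of the pseudostress error appears, because the boundedness of $a(\cdot,\cdot)$ involves only $\L^2$ norms), and the ellipticity of $a(\cdot,\cdot)$ on $\mathbb V_h$ applied to $\bPi_h^k\widehat{\boldsymbol{\rho}}-\widehat{\boldsymbol{\rho}}_h$, whose divergence vanishes, gives $\|\bPi_h^k\widehat{\boldsymbol{\rho}}-\widehat{\boldsymbol{\rho}}_h\|_{0,\O}\le C\|\bPi_h^k\widehat{\boldsymbol{\rho}}-\widehat{\boldsymbol{\rho}}\|_{0,\O}$. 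Then only $\|\widehat{\boldsymbol{\rho}}-\bPi_h^k\widehat{\boldsymbol{\rho}}\|_{0,\O}$ and $\|\widehat\bu-\mathcal P_h^k\widehat\bu\|_{0,\O}$ remain, and estimate \eqref{daniel1} needs only $\widehat{\boldsymbol{\rho}}\in\mathbb H^s(\O)$ and $\bdiv\widehat{\boldsymbol{\rho}}\in\L^2$ to produce the rate $h^{\min\{s,k+1\}}\|\boldsymbol{f}\|_{0,\O}$. With that substitution your argument matches the paper's proof; your remarks on the $\lambda$-independence of $\overline\alpha$, $\beta$ and the regularity constant are correct and consistent with the paper's Assumption.
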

\begin{proof}
Let $\boldsymbol{f}\in\mathbf{Q}$. Then, since $\bT\boldsymbol{f}=\widehat{\bu}$ and $\bT_h\boldsymbol{f}=\widehat{\bu}_h$,  we have that 
\begin{equation}
\label{eq_firsteqP1}
\displaystyle \|(\bT-\bT_h)\boldsymbol{f}\|_{0,\O}=\|\widehat{\bu}-\widehat{\bu}_h\|_{0,\O}\leq \|\widehat{\bu}-\mathcal{P}_h^k\widehat{\bu}\|_{0,\O}+ \|\mathcal{P}_h^k\widehat{\bu}-\widehat{\bu}_{h}\|_{0,\O}.
\end{equation}

Set $\bv_{h}:=\mathcal{P}_h^k\widehat{\bu}-\widehat{\bu}_{h}\in \mathbf{Q}_h$ in \eqref{eq_newinf-sup}. Then
\begin{align*}
\|\mathcal{P}_h^k\widehat{\bu}-\widehat{\bu}_{h}\|_{0,\O}\leq \dfrac{1}{\beta}\displaystyle\sup_{\boldsymbol{0}\neq\btau_h\in \mathbb{H}_{0,h}}\frac{b(\btau_h,\mathcal{P}_h^k\widehat{\bu}-\widehat{\bu}_{h})}{\|\btau_h\|_{\bdiv,\O}}.
\end{align*}
Now, the fact that  $\btau_h\in \mathbb{H}_{0,h}$, then $\bdiv(\btau_{h})\in \mathbf{Q}_h$ and using that $\mathcal{P}_h^k$ is the $\L^2(\O)$-orthogonal projector, we have
\begin{multline*}
b(\btau_h,\mathcal{P}_h^k\widehat{\bu}-\widehat{\bu}_{h})=b(\btau_h,\widehat{\bu})-b(\btau_h,\widehat{\bu}_{h})\\
=a(\widehat{\boldsymbol{\rho}}_h,\btau_h)-a(\widehat{\boldsymbol{\rho}},\btau_h)
\leq C \|\widehat{\boldsymbol{\rho}}_h-\widehat{\boldsymbol{\rho}}\|_{0,\O}\|\btau_h\|_{0,\O},
\end{multline*}
where we have used the first equations of \eqref{def:sourcel_1} and  \eqref{def:sourcel_1h}.  
Therefore
\begin{align}
\label{eq_firsteqP2}
\|\mathcal{P}_h^k\widehat{\bu}-\widehat{\bu}_{h}\|_{0,\O}\leq C \|\widehat{\boldsymbol{\rho}}_h-\widehat{\boldsymbol{\rho}}\|_{0,\O}.
\end{align}

The following step is to bound $\|\widehat{\boldsymbol{\rho}}-\widehat{\boldsymbol{\rho}}_{h}\|_{0,\O}$. first note that 
\begin{align}
\label{eq_firsteqP3}
\|\widehat{\boldsymbol{\rho}}-\widehat{\boldsymbol{\rho}}_{h}\|_{0,\O}\leq \|\widehat{\boldsymbol{\rho}}-\bPi_h^k\widehat{\boldsymbol{\rho}}\|_{0,\O}+\|\bPi_h^k\widehat{\boldsymbol{\rho}}-\widehat{\boldsymbol{\rho}}_{h}\|_{0,\O}.
\end{align}
Now, using that $\left(\bPi_h^k\widehat{\boldsymbol{\rho}}-\widehat{\boldsymbol{\rho}}_{h}\right)\in \mathbb{H}_{0,h}$, \eqref{eq:commutative}, the second  equations of \eqref{def:sourcel_1}, and   \eqref{def:sourcel_1h}, we obtain the following 
\begin{equation*}
 \bdiv(\bPi_h^k\widehat{\boldsymbol{\rho}})=\mathcal{P}_h^k(\bdiv\widehat{\boldsymbol{\rho}})=\mathcal{P}_h^k(-\boldsymbol{f})=\bdiv\widehat{\boldsymbol{\rho}}_{h},
\end{equation*}
where it is straightforward that  $\bdiv\left(\bPi_h^k\widehat{\boldsymbol{\rho}}-\widehat{\boldsymbol{\rho}}_{h}\right)\in \mathbb{V}_h$.

Now, set  $\btau_{h}:=\bPi_h^k\widehat{\boldsymbol{\rho}}-\widehat{\boldsymbol{\rho}}_{h}$ in \eqref{eq:elipt_h}. Hence, 
\begin{align*}
\overline{\alpha}\|\bPi_h^k\widehat{\boldsymbol{\rho}}-\widehat{\boldsymbol{\rho}}_{h}\|_{0,\O}^{2}&=\overline{\alpha}\|\bPi_h^k\widehat{\boldsymbol{\rho}}-\widehat{\boldsymbol{\rho}}_{h}\|_{\bdiv,\O}^{2}\leq a(\bPi_h^k\widehat{\boldsymbol{\rho}},\bPi_h^k\widehat{\boldsymbol{\rho}}-\widehat{\boldsymbol{\rho}}_{h})-a(\widehat{\boldsymbol{\rho}}_{h},\bPi_h^k\widehat{\boldsymbol{\rho}}-\widehat{\boldsymbol{\rho}}_{h})\\
%&=a(\bPi_h^k\widehat{\boldsymbol{\rho}},\btau_{h})+b(\btau_{h},\widehat{\bu}_{h})\\
%&=a(\bPi_h^k\widehat{\boldsymbol{\rho}},\btau_{h})\\
&=a(\bPi_h^k\widehat{\boldsymbol{\rho}},\bPi_h^k\widehat{\boldsymbol{\rho}}-\widehat{\boldsymbol{\rho}}_{h})-a(\widehat{\boldsymbol{\rho}},\bPi_h^k\widehat{\boldsymbol{\rho}}-\widehat{\boldsymbol{\rho}}_{h})-b(\bPi_h^k\widehat{\boldsymbol{\rho}}-\widehat{\boldsymbol{\rho}}_{h},\widehat{\bu})\\
&=a(\bPi_h^k\widehat{\boldsymbol{\rho}}-\widehat{\boldsymbol{\rho}},\bPi_h^k\widehat{\boldsymbol{\rho}}-\widehat{\boldsymbol{\rho}}_{h}) \\
&\leq C \|\bPi_h^k\widehat{\boldsymbol{\rho}}-\widehat{\boldsymbol{\rho}}\|_{0,\O}\|\bPi_h^k\widehat{\boldsymbol{\rho}}-\widehat{\boldsymbol{\rho}}_{h}\|_{0,\O}.
\end{align*}

These calculations imply that 
\begin{align}
\label{eq_firsteqP4}
\|\bPi_h^k\widehat{\boldsymbol{\rho}}-\widehat{\boldsymbol{\rho}}_{h}\|_{0,\O}
\leq C \|\bPi_h^k\widehat{\boldsymbol{\rho}}-\widehat{\boldsymbol{\rho}}\|_{0,\O},
\end{align}
and, invoking \eqref{eq_firsteqP1}, \eqref{eq_firsteqP2}, \eqref{eq_firsteqP3} and \eqref{eq_firsteqP4}, we have
\begin{equation*}
\|(\bT-\bT_h)\boldsymbol{f}\|_{0,\O} \leq C \left(\|\widehat{\bu}-\mathcal{P}_h^k\widehat{\bu}\|_{0,\O}+\|\bPi_h^k\widehat{\boldsymbol{\rho}}-\widehat{\boldsymbol{\rho}}\|_{0,\O}\right).
\end{equation*}

Finally, the proof is concluded  from the above estimate, \eqref{daniel2}, \eqref{daniel3}  and  \eqref{eq_cotasupfuente}.
%Hence, we conclude the proof.
\end{proof}

As a direct consequence of Lemma~\ref{lemma:P1}, standard results about
spectral approximation (see \cite{MR0203473}, for instance) show that isolated
parts of $\sp(\bT)$ are approximated by isolated parts of $\sp(\bT_h)$. More
precisely, let $\xi\in(0,1)$ be an isolated eigenvalue of $T$ with
multiplicity $m$ and let $\mathcal{E}$ be its associated eigenspace. Then, there
exist $m$ eigenvalues $\xi^{(1)}_h,\dots,\xi^{(m)}_h$ of $\bT_h$ (repeated
according to their respective multiplicities) which converge to $\xi$.

%
%\FL{Observe that in virtue of the previous result,  property P1 is satisfied. 
%As a direct consequence of Lemma~\ref{lemma:P1}, standard results about
%spectral approximation (see \cite{MR0203473}, for instance) show that isolated
%parts of $\sp(T)$ are approximated by isolated parts of $\sp(T_h)$. More
%precisely, let $\mu\in(0,1)$ be an isolated eigenvalue of $T$ with
%multiplicity $m$ and let $\CE$ be its associated eigenspace. Then, there
%exist $m$ eigenvalues $\mu^{(1)}_h,\dots,\mu^{(m)}_h$ of $T_h$ (repeated
%%according to their respective multiplicities) which converge to $\mu$.
 Now we are in position to establish that our method does not introduce spurious
eigenvalues, which is stated in the following result (see \cite{MR0203473} for instance).
\begin{theorem}[Spurious free]
\label{thm:spurious_free}
Let $V\subset\mathbb{C}$ be an open set containing $\sp(\bT)$. Then, there exists $h_0>0$ such that $\sp(\bT_h)\subset V$ for all $h<h_0$.
\end{theorem}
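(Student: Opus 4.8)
The plan is to derive this ``no spurious eigenvalues'' statement as a standard consequence of the norm convergence $\|\bT-\bT_h\|_{\mathcal{L}(\mathbf{Q},\mathbf{Q})}\to 0$ established in Lemma~\ref{lemma:P1}, together with the fact that $\sp(\bT)=\{0\}\cup\{\zeta_k\}_{k\in\mathbb{N}}$ with $\zeta_k\to 0$ (Theorem~\ref{thrm:spec_char_T}). First I would observe that, since $V$ is open and contains the compact-plus-limit set $\sp(\bT)$, only finitely many of the eigenvalues $\zeta_k$ lie outside $V$; more importantly, the complement $\mathbb{C}\setminus V$ is closed and, because $0\in\sp(\bT)\subset V$, it does not contain $0$. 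Hence $\mathbb{C}\setminus V$ is a closed set bounded away from $0$, and $\sp(\bT)\cap(\mathbb{C}\setminus V)=\emptyset$.

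The key analytic tool is uniform boundedness of the resolvent of $\bT$ on the relevant region. I would argue as follows: the set $F:=(\mathbb{C}\setminus V)\cap\{z:|z|\le\|\bT\|+1\}$ is compact and contained in the resolvent set $\rho(\bT)$, so $z\mapsto\|R_z(\bT)\|_{\mathcal{L}(\mathbf{Q},\mathbf{Q})}$ is continuous and finite on $F$, hence bounded there by some constant $M$; for $|z|>\|\bT\|+1$ the Neumann series gives $\|R_z(\bT)\|\le(|z|-\|\bT\|)^{-1}\le 1$. Thus $\sup_{z\in\mathbb{C}\setminus V}\|R_z(\bT)\|\le\max\{M,1\}=:C_0<\infty$. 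Then, for $z\in\mathbb{C}\setminus V$, write the identity $z\bI-\bT_h=(z\bI-\bT)\bigl(\bI-R_z(\bT)(\bT-\bT_h)\bigr)$. Choosing $h_0$ small enough that $\|\bT-\bT_h\|<1/(2C_0)$ for all $h<h_0$ (possible by Lemma~\ref{lemma:P1}, since $h^{\min\{s,k+1\}}\to 0$ with constant independent of $h$), the operator $\bI-R_z(\bT)(\bT-\bT_h)$ is invertible by Neumann series, uniformly in $z\in\mathbb{C}\setminus V$, and therefore $z\bI-\bT_h$ is invertible. This means $z\notin\sp(\bT_h)$ for every $z\in\mathbb{C}\setminus V$ and every $h<h_0$, i.e.\ $\sp(\bT_h)\subset V$, which is the assertion.

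One technical point worth addressing explicitly is that $\bT_h$ maps $\mathbf{Q}$ into the finite-dimensional subspace $\mathbf{Q}_h\subset\mathbf{Q}$, so as an operator on $\mathbf{Q}$ it is compact (finite rank) and the notions of spectrum used above apply without difficulty; its spectrum consists of $0$ together with the finitely many discrete eigenvalues $\kappa_h^{-1}$ of \eqref{def:spectral_1h}. Since $0\in V$ by hypothesis, the statement concerns only the nonzero discrete eigenvalues, and the resolvent argument above handles exactly those.

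The main obstacle is essentially bookkeeping rather than depth: one must be careful that the constant in Lemma~\ref{lemma:P1} is genuinely independent of $h$ (it is, as stated) so that $\|\bT-\bT_h\|\to 0$ really holds, and that the uniform resolvent bound $C_0$ is finite on the \emph{unbounded} set $\mathbb{C}\setminus V$ — which is why the argument must be split into the compact part $F$ (continuity of the resolvent) and the far field $|z|>\|\bT\|+1$ (Neumann series). Beyond that, the proof is the textbook perturbation argument for spectral convergence under norm convergence, for which one may simply cite \cite{MR0203473}.
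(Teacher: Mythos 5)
Your argument is precisely the standard perturbation proof that the paper itself does not write out---it simply states the theorem and cites \cite{MR0203473} after establishing the norm convergence $\|\bT-\bT_h\|\to 0$ in Lemma~\ref{lemma:P1}---so you are taking essentially the same route, just making the resolvent/Neumann-series mechanism explicit, and your splitting of $\mathbb{C}\setminus V$ into a compact piece (where the resolvent norm is continuous, hence bounded) and the far field $|z|>\|\bT\|+1$ is exactly what is needed for the uniform bound. The only slip is a sign in the factorization: it should read $z\bI-\bT_h=(z\bI-\bT)\bigl(\bI+R_z(\bT)(\bT-\bT_h)\bigr)$, since $(z\bI-\bT)+(\bT-\bT_h)=z\bI-\bT_h$; the Neumann-series invertibility argument is unaffected.
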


Let us  recall the definition of the resolvent operator of $\bT$ and $\bT_h$ respectively:
\begin{gather*}
	R_z(\bT):=(z\bI-\bT)^{-1}\,:\, \mathbf{Q} \to \mathbf{Q}\,, \quad z\in\mathbb{C}\setminus \sp(\bT), \\
	R_z(\bT_{h}):=(z\bI-\bT_h)^{-1}\,:\, \mathbf{Q}_h \to \mathbf{Q}_h\,, \quad z\in\mathbb{C}\setminus\sp(\bT_h) .
\end{gather*}

We also  invoke the following result for the resolvent of $\bT$.
\begin{prop}\label{prop:bounded_resolv}
If $z\notin\sp(\bT_{\lambda})$, then there exists a positive constant $C$, independent of $\lambda$ and $z$ such that
\begin{equation*}
\|(z\bI-\bT_{\lambda})\bu\|_{0,\O}\geq C\dist(z,\sp(\bT_{\lambda}))\|\bu \|_{0,\O},
\end{equation*}
where $\dist(z,\sp(\bT ))$ represents the distance between $z$ and the spectrum of $\bT$ in the complex plane, which in principle depends on $\lambda$. 
\end{prop}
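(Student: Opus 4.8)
The plan is to exploit the self-adjointness of $\bT_\lambda$ with respect to the $\L^2(\O)$ inner product, established earlier in Section~\ref{sec:model_problem}, since for self-adjoint operators on a Hilbert space the norm of the resolvent equals the reciprocal of the distance from $z$ to the spectrum. First I would note that, because $\bT_\lambda$ is self-adjoint and bounded on $\mathbf{Q}=\L^2(\O)^n$, its spectrum is real and the spectral theorem applies; hence for every $z\notin\sp(\bT_\lambda)$ one has the operator-norm identity $\|(z\bI-\bT_\lambda)^{-1}\|_{\mathcal{L}(\mathbf{Q},\mathbf{Q})}=1/\dist(z,\sp(\bT_\lambda))$. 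Rewriting this bound applied to an arbitrary $\bw\in\mathbf{Q}$ and then substituting $\bw=(z\bI-\bT_\lambda)\bu$ for $\bu\in\mathbf{Q}$ yields precisely
\[
\|\bu\|_{0,\O}=\|(z\bI-\bT_\lambda)^{-1}(z\bI-\bT_\lambda)\bu\|_{0,\O}\le\frac{1}{\dist(z,\sp(\bT_\lambda))}\,\|(z\bI-\bT_\lambda)\bu\|_{0,\O},
\]
which rearranges to the claimed inequality with $C=1$.

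The remaining point — and the one that makes the statement nontrivial for the purposes of the paper — is the \emph{uniformity in $\lambda$} of the constant. Since the argument above gives $C=1$ independently of $\lambda$ and of $z$, this is automatic: the self-adjointness of $\bT_\lambda$ holds for every finite $\lambda$ with the \emph{same} underlying inner product $(\cdot,\cdot)_{0,\O}$, which does not depend on $\lambda$, so the resolvent identity is $\lambda$-independent. I would emphasize in the write-up that although $\dist(z,\sp(\bT_\lambda))$ itself depends on $\lambda$ (as the spectrum moves with $\lambda$), the proportionality constant does not. One should also recall, for completeness, that $\bT_\lambda$ is compact (Theorem~\ref{thrm:spec_char_T}), so $\sp(\bT_\lambda)$ is a countable set accumulating only at $0$ and $\dist(z,\sp(\bT_\lambda))>0$ for $z\notin\sp(\bT_\lambda)$, making the right-hand side finite and the statement meaningful.

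The main obstacle, if one wanted to avoid invoking the spectral theorem directly, would be to prove the lower bound $\|(z\bI-\bT_\lambda)\bu\|_{0,\O}\ge\dist(z,\sp(\bT_\lambda))\|\bu\|_{0,\O}$ by hand; this can be done via a spectral decomposition $\bu=\sum_k c_k\,\bphi_k$ in an orthonormal eigenbasis $\{\bphi_k\}$ of $\bT_\lambda$ (together with the kernel), whence $(z\bI-\bT_\lambda)\bu=\sum_k(z-\zeta_k)c_k\,\bphi_k$ and, by Parseval,
\[
\|(z\bI-\bT_\lambda)\bu\|_{0,\O}^2=\sum_k|z-\zeta_k|^2|c_k|^2\ge\Bigl(\inf_k|z-\zeta_k|\Bigr)^2\sum_k|c_k|^2=\dist(z,\sp(\bT_\lambda))^2\|\bu\|_{0,\O}^2.
\]
Either route delivers the result with constant $C=1$, uniformly in $\lambda$ and $z$; I would present the short spectral-theorem argument in the body and relegate the explicit eigenexpansion to a remark if desired.
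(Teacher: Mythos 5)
Your argument is correct. Note that the paper itself gives no proof here---it simply cites \cite[Proposition 2.4]{MR3376135}---and the self-adjointness-plus-spectral-theorem argument you give (equivalently, the Parseval computation in an orthonormal eigenbasis augmented by the kernel, where the $|z|^2\|\bu_0\|^2$ term is also bounded below by $\dist(z,\sp(\bT_\lambda))^2\|\bu_0\|^2$ since $0\in\sp(\bT_\lambda)$) is exactly the standard proof of that cited fact, yielding the sharp constant $C=1$ uniformly in $\lambda$ and $z$. The only prerequisites are the self-adjointness of $\bT_\lambda$ with respect to $(\cdot,\cdot)_{0,\O}$ and its compactness, both of which are established in Section~\ref{sec:model_problem}, so your write-up is self-contained where the paper is not.
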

\begin{proof}
See \cite[Proposition 2.4]{MR3376135}.
\end{proof}

Now we prove the analogous result presented above, but for the resolvent of the discrete solution operator:
  \begin{lemma}
 \label{thm:bounded_resolvent}
 Let $F\subset\rho(\bT)$ be closed. Then, there exist
 positive constants $C$ and $h_0$, independent of $h$, such that for $h<h_0$
 \begin{equation*}
 \displaystyle\|(z\bI-\bT_h)^{-1}\boldsymbol{f}\|_{0,\O}\leq C\|\boldsymbol{f}\|_{0,\O}\qquad\forall z\in F.
 \end{equation*}
 \end{lemma}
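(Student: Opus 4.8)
The plan is to deduce the statement from the norm convergence $\bT_h\to\bT$ proved in Lemma~\ref{lemma:P1}, combined with the uniform resolvent estimate for $\bT$ of Proposition~\ref{prop:bounded_resolv}, via the classical perturbation argument (see \cite{MR0203473}). Throughout, $\bT_h$ is regarded as a bounded operator $\mathbf{Q}\to\mathbf{Q}$ with range in $\mathbf{Q}_h$.

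First I would dispose of the behaviour of $z$ at infinity. The uniform bound $\|\bT_h\|_{\mathcal{L}(\mathbf{Q},\mathbf{Q})}\le C_0$ established right after \eqref{def:sourcel_1h} (with $C_0$ independent of $h$ and $\lambda$) shows that for every $z$ with $|z|\ge 2C_0$ the operator $z\bI-\bT_h$ is invertible on $\mathbf{Q}$, with $\|(z\bI-\bT_h)^{-1}\|\le(|z|-C_0)^{-1}\le C_0^{-1}$. Hence it suffices to establish the desired bound for $z$ ranging over the compact set $F_0:=F\cap\{z\in\mathbb{C}:\,|z|\le 2C_0\}$.

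Next, since $F_0$ is compact and contained in the open set $\rho(\bT)$, while $\sp(\bT)$ is closed, we have $\delta:=\dist(F_0,\sp(\bT))>0$, and Proposition~\ref{prop:bounded_resolv} gives $\|R_z(\bT)\|_{\mathcal{L}(\mathbf{Q},\mathbf{Q})}\le M:=(C\delta)^{-1}$ for all $z\in F_0$, with $M$ independent of $h$. I would then use the factorization, valid on $\mathbf{Q}$,
\[
z\bI-\bT_h=(z\bI-\bT)+(\bT-\bT_h)=(z\bI-\bT)\bigl(\bI+R_z(\bT)(\bT-\bT_h)\bigr).
\]
By Lemma~\ref{lemma:P1}, $\|\bT-\bT_h\|_{\mathcal{L}(\mathbf{Q},\mathbf{Q})}\le Ch^{\min\{s,k+1\}}\to 0$, so there is $h_0>0$ such that $M\,\|\bT-\bT_h\|\le\tfrac12$ for all $h<h_0$; consequently $\bI+R_z(\bT)(\bT-\bT_h)$ is invertible, uniformly in $z\in F_0$, with inverse bounded by $2$ via a Neumann series. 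Therefore $z\bI-\bT_h$ is invertible on $\mathbf{Q}$, with
\[
(z\bI-\bT_h)^{-1}=\bigl(\bI+R_z(\bT)(\bT-\bT_h)\bigr)^{-1}R_z(\bT),
\]
so that $\|(z\bI-\bT_h)^{-1}\|\le 2M$ for every $z\in F_0$ and $h<h_0$. Combining this with the estimate for $|z|\ge 2C_0$ yields the claim with $C:=\max\{2M,\,C_0^{-1}\}$.

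The argument is essentially routine, so there is no serious obstacle; the only point requiring a little care is that $F$ need not be bounded, which is why the reduction to the compact set $F_0$ (using the $h$-uniform bound on $\|\bT_h\|$) is performed first. A secondary point, worth stating explicitly, is that all constants involved ($C_0$, $M$, $h_0$) are independent of $h$ — and, under the running Assumption, of $\lambda$ — which is inherited directly from the corresponding uniformities in the estimate following \eqref{def:sourcel_1h}, in Lemma~\ref{lemma:P1}, and in Proposition~\ref{prop:bounded_resolv}.
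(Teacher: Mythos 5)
Your proof is correct and rests on the same two ingredients as the paper's own argument --- the resolvent bound of Proposition~\ref{prop:bounded_resolv} and the norm convergence of Lemma~\ref{lemma:P1} --- applied to the decomposition $z\bI-\bT_h=(z\bI-\bT)+(\bT-\bT_h)$; the paper simply derives the lower bound $\|(z\bI-\bT_h)\boldsymbol{f}\|_{0,\O}\geq \widetilde{C}\|\boldsymbol{f}\|_{0,\O}$ by the reverse triangle inequality instead of your Neumann-series factorization. Your write-up is in fact slightly more complete, since it establishes invertibility of $z\bI-\bT_h$ (rather than only an a priori bound on the inverse) and explicitly reduces an unbounded $F$ to a compact set where $\dist(z,\sp(\bT))$ is uniformly bounded below, points the paper leaves implicit.
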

\begin{proof}
	Let $\boldsymbol{f} \in \bQ$. From Proposition \ref{prop:bounded_resolv}, there exists $C>0$, independent of $\lambda$ and $z$ such that
	\[
		\| (z\bI-\bT)\boldsymbol{f} \|_{0,\O}\geq C\,\dist( z, \sp(\bT) )\, \| \boldsymbol{f} \|_{0,\O} \quad \forall z\in F.
	\]
	Then, we have
	\begin{align*}
		\| (z\bI-\bT_h)\boldsymbol{f} \|_{0,\O} &= \| (z\bI-\bT)\boldsymbol{f}+(\bT-\bT_h)\boldsymbol{f} \|_{0,\O} \\
		                         & \geq \| (z\bI-\bT)\boldsymbol{f} \|_{0,\O} - \| (\bT-\bT_h)\boldsymbol{f} \|_{0,\O} \\
		                         & \geq C\,\dist( z, \sp(\bT) )\| \boldsymbol{f} \|_{0,\O}  -\| (\bT-\bT_h)\boldsymbol{f} \|_{0,\O} \\
		                         & \geq  \widetilde{C}\,  \| \boldsymbol{f} \|_{0,\O}.
	\end{align*}
	Then, the result follows from the previous inequality and Lemma \ref{lemma:P1}, where 
	\[
	 	\widetilde{C}:= C\,\dist( z, \sp(\bT)).
	\]
	 % and Lemma \ref{lemma:P1}.
\end{proof}

Our next task is to derive error estimates for the eigenvalues and eigenfunctions. 
Let $\boldsymbol{E}:\mathbf{Q}\rightarrow\mathbf{Q}$ be the spectral projector of $\bT$ corresponding to the isolated 
eigenvalue $\xi$, namely
\begin{equation*}
\displaystyle \boldsymbol{E}:=\frac{1}{2\pi i}\int_{\gamma} R_{z}(\bT)dz.
\end{equation*}
On the other, we define $\boldsymbol{E}_h:\mathbf{Q}\rightarrow\mathbf{Q}$ as the spectral projector of $\bT_h$ corresponding to the isolated 
eigenvalue $\xi_h$, namely
\begin{equation*}
\displaystyle \boldsymbol{E}_h:=\frac{1}{2\pi i}\int_{\gamma} R_{z}(\boldsymbol{T}_h)dz.
\end{equation*}
%
%From \cite[Lemma 1]{DNR1} we have the following result.
%\begin{lemma}
%\label{lmm:Fbound}
%There exist strictly positive constants $h_0$ and $C$ such that
%\begin{equation*}
%\|R_z(\boldsymbol{B}_h)\|\leq C\qquad\forall h<h_0,\quad\forall z\in\gamma.
%\end{equation*}
%\end{lemma}

Let $\kappa$ be an isolated eigenvalue of $\bT$. We define the following distance
\begin{equation*}
\texttt{d}_\kappa:=\frac{1}{2}\dist\left(\kappa,\sp(\bT)\setminus\{\kappa\}\right).
\end{equation*}

With this distance at hand, we define the disk centered in $\kappa$ and boundary $\gamma$ as follows
\begin{equation*}
D_\kappa:=\{z\in\mathbb{C}:\,\,|z-\kappa|\leq \texttt{d}_\kappa\}.
\end{equation*}
We observe that the disk defined above satisfies $D_\kappa\cap\sp(\bT)=\{\kappa\}$.

\begin{lemma}
\label{lemma:spectral_projectors}
Let $\boldsymbol{f}\in \bQ$. There exist constants $C>0$ and $h_{0}>0$ such that, for all $h<h_{0}$,
\begin{equation*}
	\|  (\boldsymbol{E}-\boldsymbol{E}_{h})\boldsymbol{f} \|_{0,\O}\leq \dfrac{C}{\texttt{d}_{\kappa}}\|(\bT-\bT_h)\boldsymbol{f}\|_{0,\Omega}\leq\dfrac{C}{\texttt{d}_{\kappa}}\,h^{ \min\{ s,k+1 \} } \| \boldsymbol{f} \|_{0,\O}.
\end{equation*}

\begin{proof} The proof follows by repeating the same arguments of those in \cite[Lemma 5.3]{MR3962898}.

%	Let $h_{0}>0$ be such that, for all $h<h_{0}$, $\gamma\cap \sp(\bT_\FL{h})=\emptyset$.
%	From the \FL{identity} $$(\FL{\boldsymbol{E}}-\FL{\boldsymbol{E}}_{h})\boldsymbol{f}= (z\bI-\bT)^{-1}(\bT-\bT_h)(z\bI-\bT_h)^{-1}\boldsymbol{f},$$ we have
%	\begin{align*}
%		\|  (\FL{\boldsymbol{E}}-\FL{\boldsymbol{E}}_{h})\boldsymbol{f} \|_{\bQ} & \leq \dfrac{1}{2\pi}\int_{\gamma} \| (z\bI-\bT)^{-1}(\bT-\bT_h)(z\bI-\bT_h)^{-1} \boldsymbol{f} \|_{\bQ}\, | dz | \\
%		                         &  \leq \dfrac{1}{2\pi} \int_{\gamma} \widetilde{C}_{1} \|(\bT-\bT_h)(z\bI-\bT_h)^{-1} \boldsymbol{f} \|_{\bQ} \,|dz | \\
%		                         &  \leq \dfrac{1}{2\pi} \int_{\gamma} \widetilde{C}_{1}\,\widetilde{C}_{2}\, h^{ \min\{s,k+1\} }  \|(z\bI-\bT_h)^{-1} \boldsymbol{f} \|_{\bQ} \,|dz | \\
%		                         &  \leq \dfrac{1}{2\pi} \int_{\gamma} \widetilde{C}_{1}\,\widetilde{C}_{2}\, \widetilde{C}_{3}\, h^{ \min\{s,k+1\} }  \| \boldsymbol{f} \|_{\bQ} \,|dz | \\
%		                         &  \leq C\,h^{ \min\{ s,k+1 \} }\,\| \boldsymbol{f} \|_{\bQ}.
%	\end{align*}
%	where the last inequality follows from Proposition \ref{prop:bounded_resolv}, Lemma \ref{lemma:P1} and Theorem \ref{thm:bounded_resolvent}. 
\end{proof}

\end{lemma}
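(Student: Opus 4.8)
The plan is to bound $\|(\boldsymbol{E}-\boldsymbol{E}_h)\boldsymbol{f}\|_{0,\O}$ via the standard contour-integral representation of spectral projectors, reducing everything to a bound on the difference of resolvents along the fixed circle $\gamma=\partial D_\kappa$. Writing
\[
(\boldsymbol{E}-\boldsymbol{E}_h)\boldsymbol{f}=\frac{1}{2\pi i}\int_{\gamma}\bigl(R_z(\bT)-R_z(\bT_h)\bigr)\boldsymbol{f}\,dz,
\]
I would use the resolvent identity $R_z(\bT)-R_z(\bT_h)=R_z(\bT_h)(\bT-\bT_h)R_z(\bT)$, so that for $z\in\gamma$,
\[
\|\bigl(R_z(\bT)-R_z(\bT_h)\bigr)\boldsymbol{f}\|_{0,\O}\le \|R_z(\bT_h)\|\,\|(\bT-\bT_h)R_z(\bT)\boldsymbol{f}\|_{0,\O}.
\]
The first factor is bounded uniformly in $z\in\gamma$ and in $h<h_0$ by Lemma~\ref{thm:bounded_resolvent} (applied with $F=\gamma\subset\rho(\bT)$, which is legitimate since $D_\kappa\cap\sp(\bT)=\{\kappa\}$ means $\gamma$ avoids the spectrum). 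The second factor is bounded by $\|(\bT-\bT_h)\|_{\mathcal L(\bQ,\bQ)}\,\|R_z(\bT)\boldsymbol{f}\|_{0,\O}$, and $\|R_z(\bT)\|$ on $\gamma$ is controlled by $C/\texttt{d}_\kappa$ via Proposition~\ref{prop:bounded_resolv} (indeed $\dist(z,\sp(\bT))=\texttt{d}_\kappa$ for $z\in\gamma$, by definition of the disk). Integrating over $\gamma$, whose length is $2\pi\texttt{d}_\kappa$, the factor $\texttt{d}_\kappa$ from the perimeter cancels one power of $\texttt{d}_\kappa^{-1}$ but the argument is cleanest if one keeps track only of the $\|R_z(\bT_h)\|$ estimate; in any case one arrives at
\[
\|(\boldsymbol{E}-\boldsymbol{E}_h)\boldsymbol{f}\|_{0,\O}\le \frac{C}{\texttt{d}_\kappa}\,\|(\bT-\bT_h)\boldsymbol{f}\|_{0,\O},
\]
where I would absorb the operator-norm passage $\|(\bT-\bT_h)R_z(\bT)\boldsymbol{f}\|_{0,\O}\le \|(\bT-\bT_h)\|\,\|R_z(\bT)\|\,\|\boldsymbol{f}\|_{0,\O}$ into the constant, using that $\|\bT-\bT_h\|\to 0$ by Lemma~\ref{lemma:P1}. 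The second inequality in the statement is then immediate from Lemma~\ref{lemma:P1}, which gives $\|(\bT-\bT_h)\boldsymbol{f}\|_{0,\O}\le Ch^{\min\{s,k+1\}}\|\boldsymbol{f}\|_{0,\O}$.

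A slightly subtle point worth addressing explicitly is the domain mismatch: $\bT_h$ maps into $\mathbf{Q}_h$, so $R_z(\bT_h)$ and $\boldsymbol{E}_h$ should be understood as acting on $\mathbf Q$ by composing with the natural inclusion (or equivalently one works with $\bT_h$ viewed as an operator on $\mathbf Q$ whose range happens to lie in $\mathbf Q_h$); the resolvent identity and all the norm estimates go through verbatim once this convention is fixed, exactly as in \cite[Lemma 5.3]{MR3962898}. The main (and essentially only) obstacle is ensuring the uniform-in-$h$ bound on $\|R_z(\bT_h)\|$ over the whole circle $\gamma$, not just at isolated points — but this is precisely the content of Lemma~\ref{thm:bounded_resolvent}, applied to the compact set $F=\gamma$, so the proof reduces to assembling the three ingredients (resolvent identity, Lemma~\ref{thm:bounded_resolvent}, Proposition~\ref{prop:bounded_resolv}) and invoking Lemma~\ref{lemma:P1} for the final rate. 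Since these are all in place, the cleanest write-up is simply to refer to \cite[Lemma 5.3]{MR3962898} after recording the contour representation, which is what the excerpt already does.
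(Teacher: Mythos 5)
Your argument is precisely the one the paper delegates to \cite[Lemma 5.3]{MR3962898}: the contour representation of the two projectors over $\gamma=\partial D_\kappa$, the resolvent identity $R_z(\bT)-R_z(\bT_h)=R_z(\bT_h)(\bT-\bT_h)R_z(\bT)$, the uniform bound on $R_z(\bT_h)$ over $\gamma$ from Lemma~\ref{thm:bounded_resolvent}, the bound $\|R_z(\bT)\|\le C/\texttt{d}_\kappa$ from Proposition~\ref{prop:bounded_resolv}, and Lemma~\ref{lemma:P1} for the rate, so it matches the paper's intended proof. The only cosmetic mismatch is that the identity yields $(\bT-\bT_h)R_z(\bT)\boldsymbol{f}$ rather than $(\bT-\bT_h)\boldsymbol{f}$ in the intermediate expression, but since Lemma~\ref{lemma:P1} is a uniform operator-norm estimate this is harmless and both inequalities of the statement follow exactly as you describe.
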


We recall the definition of the \textit{gap} $\hdel$ between two closed
subspaces $\CM$ and $\CN$ of $\L^2(\O)$:
$$
\hdel(\CM,\CN)
:=\max\big\{\delta(\CM,\CN),\delta(\CN,\CM)\big\},
$$
where
$$
\delta(\CM,\CN)
:=\sup_{x\in\CM:\ \left\|x\right\|_{0,\O}=1}
\left(\inf_{y\in\CN}\left\|x-y\right\|_{0,\O}\right).
$$

\begin{theorem}
\label{millar2015}
There exists strictly positive constant C, such that 
\begin{equation*}
	\widehat{\delta} ( \mathcal{E}, \mathcal{E}_{h} )\leq \dfrac{C}{d_{\kappa}}\,h^{ \min\{ s,k+1 \} } \quad \mbox{and} \quad | \xi-\xi_{h}(i) | \leq \dfrac{C}{d_{\kappa}}\,h^{ \min\{s,k+1 \} },
\end{equation*}
where $\xi_{h}(1), \ldots , \xi_{h}(m)$ are the eigenvalues of $\bT_{h}$.
\begin{proof}
	As consequence of Lemma \ref{lemma:P1}, the convergence in norm to $\bT -\bT_{h}$ as $h$ goes to zero. Then, the proof follows as a direct consequence of Lemma \ref{lemma:spectral_projectors} and \cite[Theorems 7.3]{MR1115235}.   
\end{proof}
\end{theorem}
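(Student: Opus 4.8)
The plan is to obtain both estimates as direct corollaries of the abstract spectral approximation theory of Babu\v{s}ka and Osborn \cite{MR1115235}, whose hypotheses have all been verified above. The two workhorses are Lemma~\ref{lemma:P1}, which gives the convergence in norm $\|\bT-\bT_h\|\le Ch^{\min\{s,k+1\}}\to 0$ on $\mathbf{Q}$, and Lemma~\ref{lemma:spectral_projectors}, which controls the difference of spectral projectors by $\tfrac{C}{d_\kappa}h^{\min\{s,k+1\}}$. I would also use the structural facts established in Section~\ref{sec:model_problem}: $\bT$ is compact and self-adjoint, so the isolated eigenvalue $\xi\in(0,1)$ has finite multiplicity $m$, a well-defined eigenspace $\mathcal{E}=\mathrm{range}(\boldsymbol{E})$, and can be enclosed by the circle $\gamma$ of radius comparable to $d_\kappa$ separating $\xi$ from $\sp(\bT)\setminus\{\xi\}$; by Theorem~\ref{thm:spurious_free} together with the norm convergence, for $h$ below some threshold $h_0$ the projector $\boldsymbol{E}_h$ has rank exactly $m$, so that $\dim\mathcal{E}_h=\dim\mathcal{E}=m$.

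For the gap estimate I would argue as follows. For any $x\in\mathcal{E}$ with $\|x\|_{0,\O}=1$ one has $\boldsymbol{E}x=x$ and $\boldsymbol{E}_hx\in\mathcal{E}_h$, whence $\inf_{y\in\mathcal{E}_h}\|x-y\|_{0,\O}\le\|(\boldsymbol{E}-\boldsymbol{E}_h)x\|_{0,\O}$; taking the supremum over such $x$ gives $\delta(\mathcal{E},\mathcal{E}_h)\le\|(\boldsymbol{E}-\boldsymbol{E}_h)|_{\mathcal{E}}\|$. The reverse inequality for $\delta(\mathcal{E}_h,\mathcal{E})$ follows from the same computation after exchanging the roles of the two equidimensional spaces, together with the uniform bound on $\|\boldsymbol{E}_h\|$, exactly as in \cite[Theorem~7.1]{MR1115235} (see also \cite{MR0203473}). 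Combining these, $\widehat{\delta}(\mathcal{E},\mathcal{E}_h)\le C\|(\boldsymbol{E}-\boldsymbol{E}_h)|_{\mathcal{E}}\|$, and Lemma~\ref{lemma:spectral_projectors} then delivers $\widehat{\delta}(\mathcal{E},\mathcal{E}_h)\le\tfrac{C}{d_\kappa}h^{\min\{s,k+1\}}$.

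For the eigenvalue estimate I would invoke \cite[Theorem~7.3]{MR1115235}: since $\bT$ is self-adjoint, that result bounds each $|\xi-\xi_h(i)|$, $i=1,\dots,m$, by a constant times $\|(\bT-\bT_h)|_{\mathcal{E}}\|$ --- this is the weaker of the two classical bounds, but it is the one claimed in the statement --- the constant being governed by the length of $\gamma$ and its distance to $\sp(\bT)$, i.e.\ by $1/d_\kappa$. Since $\|(\bT-\bT_h)|_{\mathcal{E}}\|\le\|\bT-\bT_h\|$, Lemma~\ref{lemma:P1} immediately gives $|\xi-\xi_h(i)|\le\tfrac{C}{d_\kappa}h^{\min\{s,k+1\}}$, which is the second assertion.

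The only real difficulty is the bookkeeping of constants: one must check that the constants produced by the abstract theorems of \cite{MR1115235} depend on the eigenvalue $\xi$ only through $d_\kappa$ (via the choice of $\gamma$) and are otherwise independent of $h$ and of $\lambda$, the $\lambda$-independence resting on the corresponding property already recorded in Lemmas~\ref{lemma:P1} and~\ref{lemma:spectral_projectors}, and that the threshold $h_0$ can be taken uniform. No new analytical ingredient is required: once the norm convergence of Lemma~\ref{lemma:P1} and the projector bound of Lemma~\ref{lemma:spectral_projectors} are available, the theorem is exactly the conclusion of \cite[Theorems~7.1 and~7.3]{MR1115235}.
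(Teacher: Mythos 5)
Your proposal is correct and follows essentially the same route as the paper: the paper's own proof simply cites the norm convergence of Lemma~\ref{lemma:P1}, the projector estimate of Lemma~\ref{lemma:spectral_projectors}, and the abstract results of \cite{MR1115235}, which is exactly the skeleton you flesh out. The additional detail you supply (equidimensionality of the eigenspaces, the reduction of the gap to the projector difference, and the constant bookkeeping) is a faithful expansion of that same argument rather than a different approach.
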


%
%The following error estimates for the approximation of eigenvalues and
%eigenfunctions hold true.
%
%\begin{theorem}
%\label{gap}
%There exists a strictly positive constant $C$ such that
%\begin{align*}
%\hdel(\boldsymbol{F}_h(\mathbf{Q}),\boldsymbol{E}(\mathbf{Q})) 
%& \le \dfrac{C}{\texttt{d}_{\kappa}}\xi_h,
%\\
%\left|\mu-\mu_h^{(i)}\right|
%& \le \dfrac{C}{\texttt{d}_{\kappa}}\xi_h,\qquad i=1,\dots,m,
%\end{align*}
%where
%$$
%\xi_h
%:=\sup_{\boldsymbol{f}\in E(\mathcal{Q}):\ \left\|\boldsymbol{f}\right\|_{0,\O}=1}
%\left\|(\bT-\bT_h)\boldsymbol{f}\right\|_{0,\O}.
%$$
%\end{theorem}
%
%\begin{proof}
%The proof runs identically as in \cite[Theorem 1]{DNR2}.
%%As a consequence of Lemma~\ref{lemcotste}, $T_h$ converges in norm to $T$
%%%as $h$ goes to zero. Then, the proof follows as a direct consequence of
%%%Theorems~7.1 and 7.3 from \cite{BO}.
%\end{proof}

The next
step is to show an optimal order estimate for this term.

As is customary in eigenvalue problems, we can improve the simple order obtained in Theorem \ref{millar2015} for the eigenvalues, which is stated in  next result.
\begin{theorem}
\label{thm:double}
There exists a strictly positive constant $h_0$ such that, for $h<h_0$ there holds
\begin{equation*}
|\kappa-\kappa_h| \leq \dfrac{C}{d_{\kappa}} h^{2\min\{s,k+1\}},
\end{equation*}
where the positive constant $C$ is independent of $h$ and $\lambda$.
\end{theorem}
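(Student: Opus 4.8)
The plan is to establish the standard double-order estimate for self-adjoint eigenvalue problems, adapting the classical identity that expresses the eigenvalue error in terms of the error of the solution operator restricted to the eigenspace. First I would recall that since $\bT$ and $\bT_h$ are self-adjoint with respect to the $\L^2(\O)$ inner product, and $\kappa$ is an isolated eigenvalue with eigenspace $\mathcal{E}$ (of dimension $m$), we may pick for each discrete eigenvalue $\kappa_h=\xi_h(i)$ an associated normalized discrete eigenfunction $\bu_h$, and by Theorem~\ref{millar2015} there exists $\bu\in\mathcal{E}$ with $\|\bu\|_{0,\O}=1$ such that $\|\bu-\bu_h\|_{0,\O}\leq \tfrac{C}{d_\kappa}h^{\min\{s,k+1\}}$. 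The key algebraic step is the identity
\begin{equation*}
(\bT-\bT_h)\bu_h = (\kappa^{-1}-\kappa_h^{-1})\bT\bu_h + \kappa_h^{-1}(\bT-\bT_h)\bu_h \quad\text{interpreted via}\quad \bT\bu=\kappa^{-1}\bu,\ \bT_h\bu_h=\kappa_h^{-1}\bu_h,
\end{equation*}
which, after taking the $\L^2$-inner product with $\bu_h$ and using self-adjointness, yields a representation of $(\kappa^{-1}-\kappa_h^{-1})$ as $\big((\bT-\bT_h)\bu,\bu\big)_{0,\O}$ plus a remainder controlled by $\|(\bT-\bT_h)(\bu-\bu_h)\|_{0,\O}\|\bu-\bu_h\|_{0,\O}$ and by $\|(\bT-\bT_h)\bu\|_{0,\O}\|\bu-\bu_h\|_{0,\O}$.

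Next I would bound each piece. Both remainder terms are of order $h^{2\min\{s,k+1\}}$ (with the $d_\kappa^{-1}$ factor) by combining Lemma~\ref{lemma:P1} with the eigenfunction convergence of Theorem~\ref{millar2015}. The main term $\big((\bT-\bT_h)\bu,\bu\big)_{0,\O}$ is the one that must be shown to be $O(h^{2\min\{s,k+1\}})$ rather than just $O(h^{\min\{s,k+1\}})$; here I would exploit the mixed structure. Writing $\widehat\bsig:=\bT\text{-data}$, more precisely using that $\bu=\bT\bu\cdot\kappa$, i.e. $\bu$ and $\bu_h$ (the Galerkin solution of the source problem with right-hand side $\kappa\bu$) together with the associated pseudostresses $\boldsymbol{\rho}$, $\boldsymbol{\rho}_h$ satisfy \eqref{def:sourcel_1} and \eqref{def:sourcel_1h}, I would expand $\big((\bT-\bT_h)\bu,\bu\big)_{0,\O}=\kappa^{-1}(\bu-\bu_h,\bu)_{0,\O}$ using Galerkin orthogonality. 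The standard trick: subtract the continuous and discrete source problems, test the error equations with the errors themselves, and use the commuting diagram \eqref{eq:commutative} together with the fact that $\bdiv(\boldsymbol{\rho}-\boldsymbol{\rho}_h)=\mathcal{P}_h^k(\bdiv\boldsymbol{\rho})-\bdiv\boldsymbol{\rho}=0$ when the data is a polynomial projection — or more carefully, use that $(\bu-\bu_h,\bu)_{0,\O}=(\bu-\bu_h,\bu-\mathcal{P}_h^k\bu)_{0,\O}+(\bu-\bu_h,\mathcal{P}_h^k\bu)_{0,\O}$ and manipulate the second term through the bilinear forms to produce a product of two approximation errors, each of order $h^{\min\{s,k+1\}}$, using the regularity $\bu\in\H^{1+s}$, $\boldsymbol{\rho}\in\mathbb{H}^s$, $\bdiv\boldsymbol{\rho}\in\H^{1+s}$ from Lemma~\ref{lmm:add_eigen} and Remark~\ref{daniel0}, the estimates \eqref{daniel1}--\eqref{daniel4}, and \eqref{eq_cotasupfuente}.

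Concretely, setting $e_{\boldsymbol{\rho}}:=\boldsymbol{\rho}-\boldsymbol{\rho}_h$ and $e_{\bu}:=\bu-\bu_h$, the error equations give $a(e_{\boldsymbol{\rho}},\btau_h)+b(\btau_h,e_{\bu})=0$ and $b(e_{\boldsymbol{\rho}},\bv_h)=0$ for all discrete test functions; then writing $\|e_{\bu}\|_{0,\O}^2$-type quantities in terms of $a(e_{\boldsymbol{\rho}},\boldsymbol{\rho}-\bPi_h^k\boldsymbol{\rho})$ and $b(\boldsymbol{\rho}-\bPi_h^k\boldsymbol{\rho},e_{\bu})$ via insertion of interpolants and the orthogonalities, one obtains $\|e_{\boldsymbol{\rho}}\|_{0,\O}\lesssim \|\boldsymbol{\rho}-\bPi_h^k\boldsymbol{\rho}\|_{0,\O}$ and finally a Cauchy--Schwarz bound $|(e_{\bu},\bu)_{0,\O}|\lesssim \|e_{\boldsymbol{\rho}}\|_{0,\O}\,\|\boldsymbol{\rho}-\bPi_h^k\boldsymbol{\rho}\|_{0,\O}+\|e_{\bu}\|_{0,\O}\,\|\bu-\mathcal{P}_h^k\bu\|_{0,\O}\lesssim h^{2\min\{s,k+1\}}$. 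The inequality $|\kappa-\kappa_h|=|\kappa\kappa_h|\,|\kappa^{-1}-\kappa_h^{-1}|\leq C\,|\kappa^{-1}-\kappa_h^{-1}|$ (valid since $\kappa_h\to\kappa$, hence $\kappa_h$ is bounded away from $0$ and $\infty$ for $h$ small) then converts the estimate on the reciprocals into the claimed bound, tracking the $d_\kappa^{-1}$ dependence throughout. The main obstacle is the bookkeeping in the mixed setting: one must be careful that testing the error equations is only legitimate with \emph{discrete} functions, so interpolation operators $\bPi_h^k$ and $\mathcal{P}_h^k$ and the commuting property \eqref{eq:commutative} must be inserted at exactly the right places to avoid losing a power of $h$, and to keep the constant independent of $\lambda$ one must use the $\lambda$-uniform bounds \eqref{eq_cotasupfuente}, \eqref{eq:identity_a} and the Assumption on $\widehat s,\widehat C$.
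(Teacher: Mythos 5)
Your argument is correct, but it takes a genuinely different route from the paper's. The paper's proof is essentially two lines: it invokes, as a black box, the algebraic identity for mixed eigenvalue approximations (cited from the literature, e.g.\ the references given before the displayed identity) expressing $\kappa-\kappa_h$ as a sum of \emph{squared} eigenfunction errors (the identity as printed in the paper is missing the squares --- evidently a typo, since without them no doubling would follow), and then bounds $\|\bu-\bu_h\|_{0,\O}$ and $\|\boldsymbol{\rho}-\boldsymbol{\rho}_h\|_{0,\O}$ by $C h^{\min\{s,k+1\}}/d_\kappa$ via Theorem \ref{millar2015}, Remark \ref{daniel0} and the interpolation estimates. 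You instead work at the operator level in the Babu\v{s}ka--Osborn framework: you reduce $|\kappa^{-1}-\kappa_h^{-1}|$ to the consistency term $\bigl((\bT-\bT_h)\bu,\bu\bigr)_{0,\O}$ plus quadratic remainders controlled by Lemma \ref{lemma:P1} and Theorem \ref{millar2015}, and then prove by hand that the consistency term superconverges, using the error equations, the commuting property \eqref{eq:commutative} and the orthogonality of $\mathcal{P}_h^k$ to rewrite it as a product of two first-order approximation errors; this computation does go through (one obtains, for the source solutions with datum $\bu$, that $-(\bu,\widehat{\bu}-\widehat{\bu}_h)_{0,\O}= a(\widehat{\boldsymbol{\rho}}-\widehat{\boldsymbol{\rho}}_h,\widehat{\boldsymbol{\rho}}-\widehat{\boldsymbol{\rho}}_h)+2(\mathcal{P}_h^k\bu-\bu,\widehat{\bu}-\widehat{\bu}_h)_{0,\O}$, both terms of order $h^{2\min\{s,k+1\}}$). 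The two routes rest on the same underlying algebra --- the identity the paper cites is derived by precisely the manipulations you perform --- but yours is self-contained where the paper defers to references, at the cost of more bookkeeping, and it makes the mechanism of the doubling explicit. Two minor points of care: your displayed ``key algebraic identity'' has $(\bT-\bT_h)\bu_h$ on both sides and is not literally an identity as written; what you actually need (and what works, by self-adjointness of $\bT$) is $(\kappa_h^{-1}-\kappa^{-1})(\bu_h,\bu)_{0,\O}=\bigl((\bT_h-\bT)\bu_h,\bu\bigr)_{0,\O}$ followed by splitting off $\bigl((\bT_h-\bT)(\bu_h-\bu),\bu\bigr)_{0,\O}$. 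Also, be careful to distinguish the discrete eigenfunction $\bu_h$ from the discrete source solution $\bT_h(\kappa\bu)$; your superconvergence computation is for the latter, and the former enters only through the remainder terms, which you handle correctly.
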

\begin{proof}
	Let $(\kappa , (\boldsymbol{\rho}, \boldsymbol{u}))$ and $(\kappa_{h} , (\boldsymbol{\rho}_{h}, \boldsymbol{u}_{h}))$ be the solutions of problems \eqref{def:spectral_1} and \eqref{def:spectral_1h} respectively, with $\| \boldsymbol{u} \|_{0,\Omega}=\| \boldsymbol{u}_{h} \|_{0,\Omega}=1$. 
	
 For the proof, we use the following estimate that  is proved for mixed
methods in general (see \cite{MR4080229,MR1722056} for more details).
%rewrite  problems \eqref{def:spectral_H0} and \eqref{def:spectral_1h} as follows
%	\begin{gather*}
%		A( (\boldsymbol{\rho}, \boldsymbol{u}), (\boldsymbol{\tau}, \boldsymbol{v}) )= -\kappa B(  \bu, \bv ), \\
%		A( (\boldsymbol{\rho}_{h}, \boldsymbol{u}_{h}), (\boldsymbol{\tau}_{h}, \boldsymbol{v}_{h}) )= -\kappa_{h} B( \boldsymbol{u}_{h}, \boldsymbol{v}_{h}), 
%	\end{gather*}
%	where $A(\cdot,\cdot)$ and $B(\cdot,\cdot)$ are the symmetric and bounded bilinear forms defined by
%	\begin{gather*}
%		A( (\boldsymbol{\rho}, \boldsymbol{u}), (\boldsymbol{\tau}, \boldsymbol{v}) ):= a(\boldsymbol{\rho}, \boldsymbol{\tau})+b(\boldsymbol{\tau}, \boldsymbol{u})+ b(\boldsymbol{\rho}, \boldsymbol{v}), 
%		\end{gather*}
%		and 
%		\begin{gather*}
%	    B(\boldsymbol{u}, \boldsymbol{v}):= ( \boldsymbol{u},\boldsymbol{v} )_{0,\Omega}.
%	\end{gather*}
%	\cite{MR4080229,MR1722056}
%	In order to simplify the presentation, we define $\boldsymbol{U}:=(\boldsymbol{\rho}, \boldsymbol{u})$ and $\boldsymbol{U}_{h}:=(\boldsymbol{\rho}_{h}, \boldsymbol{u}_{h})$.
%	The following identity is direct:
	\begin{equation*}
		\kappa-\kappa_{h}=\| \boldsymbol{\rho}-\boldsymbol{\rho}_h \|_{0,\O} +\kappa_{h}\| \boldsymbol{u}-\boldsymbol{u}_h \|_{0,\O}.
		\end{equation*}
%}	
%	\begin{equation*}
%		(\kappa-\kappa_{h})B(\boldsymbol{U}_{h}, \boldsymbol{U}_{h})=A(\boldsymbol{U}-\boldsymbol{U}_{h}, \boldsymbol{U}-\boldsymbol{U}_{h})+\kappa\, B(\boldsymbol{U}-\boldsymbol{U}_{h}, \boldsymbol{U}-\boldsymbol{U}_{h}).
%	\end{equation*}
%	Now, since $B(\boldsymbol{U}_{h},\boldsymbol{U}_{h})=\| \bu_{h} \|_{0,\Omega}^{2}=1$, we have that
%	\begin{equation*}
%		|\kappa-\kappa_{h} |\leq |A(\boldsymbol{U}-\boldsymbol{U}_{h}, \boldsymbol{U}-\boldsymbol{U}_{h} ) | +|\kappa|\,| B(\boldsymbol{U}-\boldsymbol{U}_{h}, \boldsymbol{U}-\boldsymbol{U}_{h}) |\leq C \|  \boldsymbol{U}-\boldsymbol{U}_{h} \|^{2}.
%	\end{equation*}
%	
%	Observe that,
%	\begin{equation*} \label{daniel5}
%	\|  \boldsymbol{U}-\boldsymbol{U}_{h} \| \FL{=\| \boldsymbol{\rho}-\boldsymbol{\rho}_h \|_{0,\O} +\| \bdiv(\boldsymbol{\rho}-\boldsymbol{\rho}_h) \|_{0,\O}+\| \boldsymbol{u}-\boldsymbol{u}_h \|_{0,\O}}
%	\end{equation*}		

	According to Remark \ref{daniel0}, in addition to the approximation properties \eqref{daniel1}, \eqref{daniel2}, \eqref{eq:commutative}, \eqref{daniel3}, \eqref{daniel4} and Theorem \ref{millar2015}, we obtain 
%	\begin{equation*} \label{eq:doble_ordenl}
%		\| \boldsymbol{U}-\boldsymbol{U}_{h} \| \leq  C h^{  \min\{s,k+1\}},
%	\end{equation*} 
%which in particular implies
	\begin{equation*}
	\|\bu-\bu_h\|_{0,\O}\leq \dfrac{C}{d_{\kappa}} h^{  \min\{s,k+1\}} \quad \mbox{and}\quad \| \boldsymbol{\rho}-\boldsymbol{\rho}_h \|_{0,\O}\leq \dfrac{C}{d_{\kappa}} h^{  \min\{s,k+1\}},  
	\end{equation*}
	where the positive constant $C$ is uniform on $h$ and $\lambda$.

	This concludes the proof.
\end{proof}

\section{Numerical experiments}
\label{sec:numerics}
The aim of this section is to confirm, computationally, that the proposed method works correctly and delivers an accurate approximation of the spectrum 
of $\bT$, reinforcing  the theoretical results of our study. The reported results have been obtained with a FEniCS code \cite{MR3618064}, considering the meshes provided by this software. 

For our experiments we consider as Young's modulus $E=1$. The Poisson ratio $\nu$ will take different values. It is well known that the Lam\'e constant
$\lambda$ blows up when  $\nu=1/2$. Is for this reason that we are interested in the performance of the method in the case limit case $\lambda=\infty$.
The Lam\'e coefficients are defined by 
$$\lambda:=\frac{E\nu}{(1+\nu)(1-2\nu)}\quad\text{and}\quad\mu:=\frac{E}{2(1+\nu)}.$$

We compute the eigenvalues and eigenfunctions considering different polynomial degrees in the unitary square,  the unitary cube, the unitary  circle and the classic  L-shaped domain. 
We also report in the following  tables  an estimate  of the order of
convergence $\alpha$ and, in the last column,
more accurate values of the vibration frequencies $\omega_{extr}:=\sqrt{\kappa_{extr}}$,
extrapolated from the computed ones by means of a
least-squares fitting of the model
$$\omega_{hi}\approx\omega_{i}+C_ih^{\alpha_i},$$
that has been done for each vibration mode separately. The fitted parameters $\omega_i$ and $\alpha_i$ are the reported
extrapolated vibration frequency $\omega_{extr}$ and  estimated order of
convergence, respectively.

%We begin with the   unitary square.

\subsection{Unitary square.} The considered geometry for this test is $\O:=(0,1)^2$. Since we are interested in the stability
for different values of $\lambda$, we compute the eigenvalues for $\nu=0.35, 0.49, 0.5$. Clearly in the limit 
case $\nu=0.5$, the Lam\'e constant $\lambda=\infty$,  leading to a modification on  \eqref{eq:identity_a}
as follows
\begin{equation*}
\displaystyle a(\bxi,\btau):=\frac{1}{\mu}\int_{\Omega}\bxi^{\texttt{d}}:\btau^{\texttt{d}}\quad\btau \in\mathbb{H}.
\end{equation*}

In this test, we consider meshes like the presented in Figure \ref{meshes_square}.
\begin{figure}[H]
	\begin{center}
		\begin{minipage}{11cm}
			\centering\includegraphics[height=4.6cm, width=3.3cm]{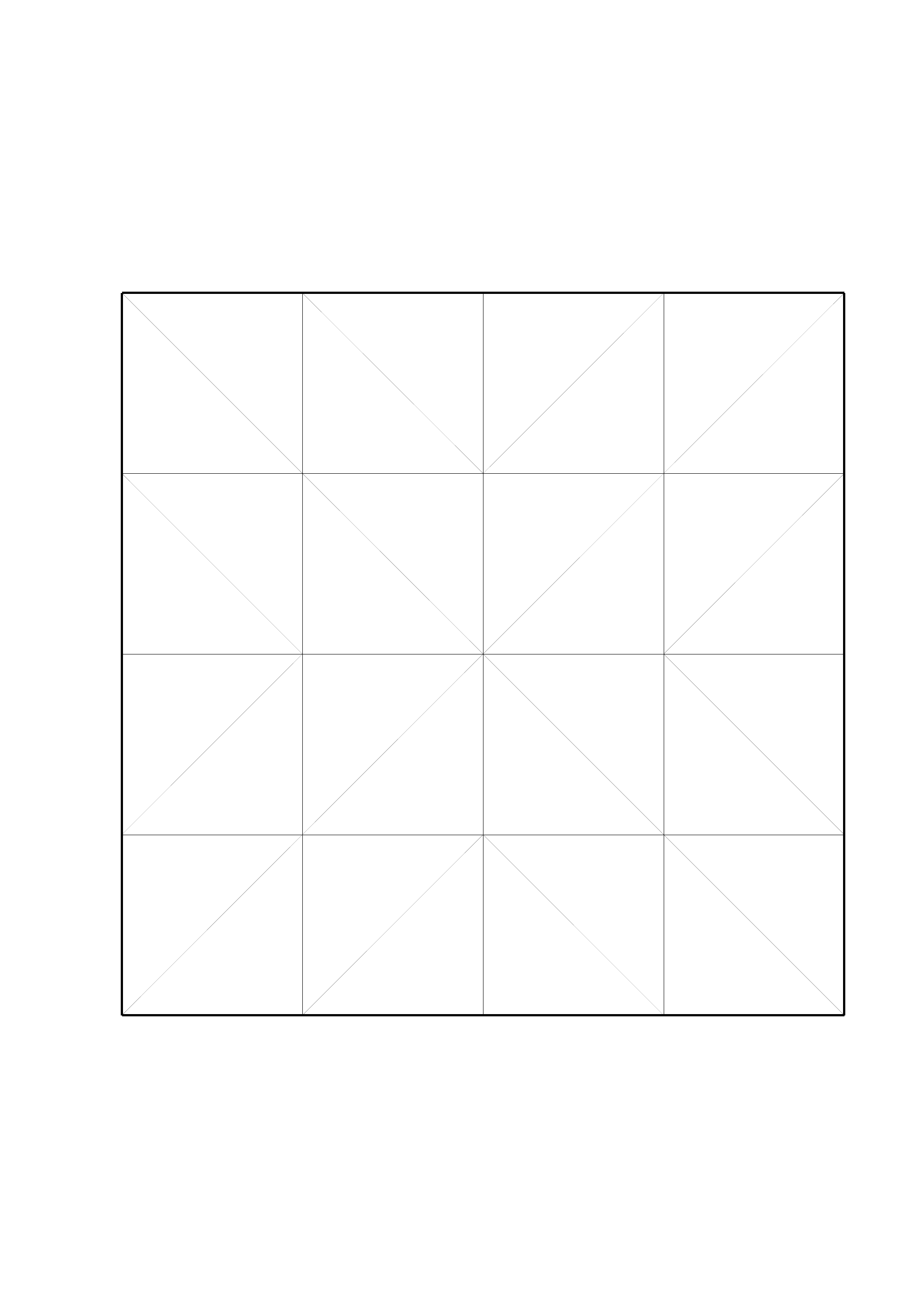}
			\centering\includegraphics[height=4.6cm, width=3.3cm]{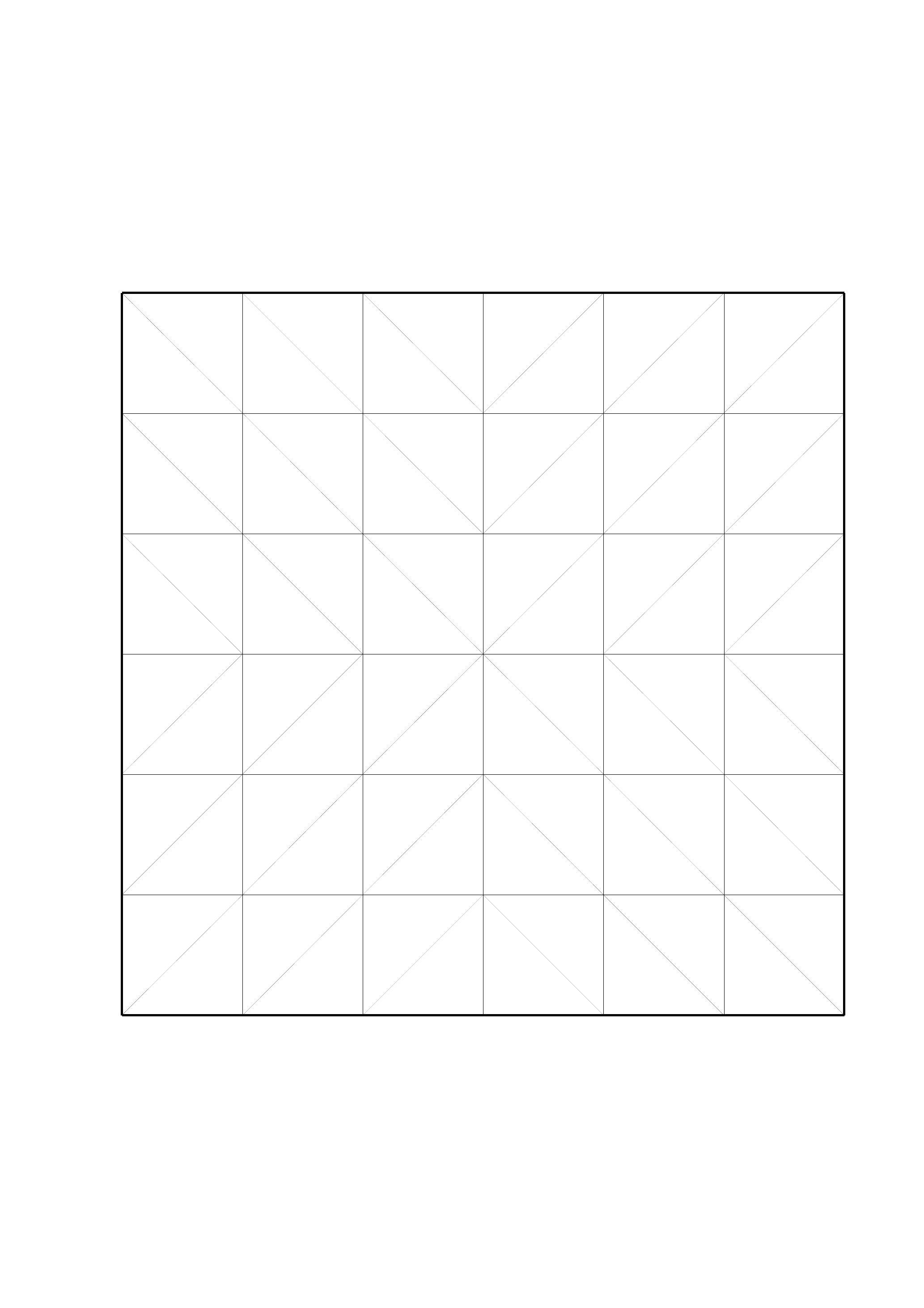}
		%	\centering\includegraphics[height=3.4cm, width=3.4cm]{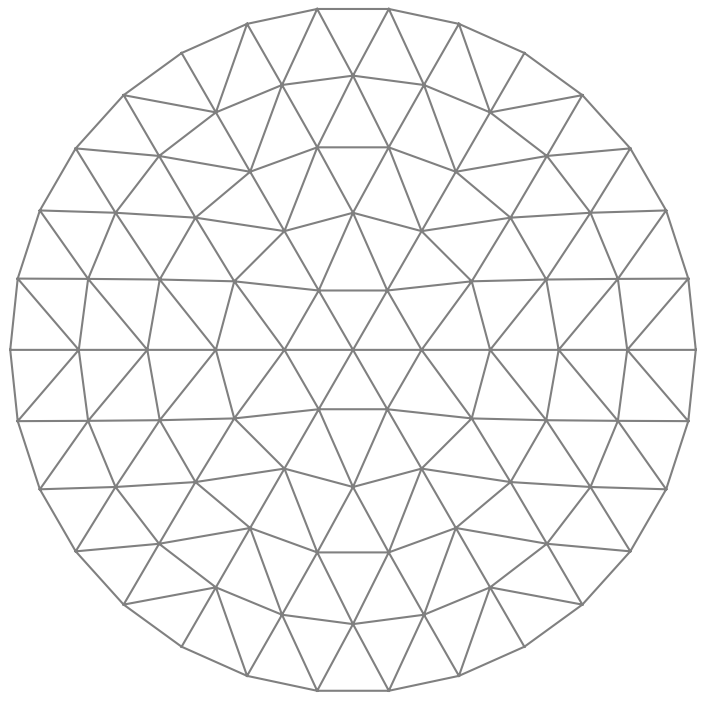}
			%\centering\includegraphics[height=4.4cm, width=5.4cm]{}
                   \end{minipage}
		\caption{Examples of the meshes used in the unit square. The left figure represents a mesh for $N=4$ and the right one for $N=6$.}
		\label{meshes_square}
	\end{center}
\end{figure}
 In the following tables, the parameter $N$ will represent the refinement level of the meshes  and it is chosen as 
 the number of subdivisions in the abscissa.
We report in Table \ref{table:k=0_cuadrado} the lowest vibration frequencies for $k=0$ and different Poisson's ratio in the unitary square. The table also includes the estimated orders of convergence. The accurate values extrapolated are also reported in the last column to allow for comparison.
\begin{table}[H]
%\footnotesize
%\singlespacing

\begin{center}
\begin{tabular}{c |c c c c |c| c }
\toprule
 $\nu $        & $N=40$             &  $N=50$         &   $N=60$         & $N=70$ & $\alpha$& $\omega_{extr}$  \\ 
 \midrule
					 & 4.19038  & 4.19134&  4.19187&   4.19219 &  1.94&  4.19311\\
                                           & 4.19038  & 4.19134&  4.19187&   4.19219 &  1.94&  4.19311\\
 \multirow{2}{1cm}{0.35}     & 4.37189 & 4.37199&  4.37205&   4.37208 &  1.95&  4.37217\\
  					 & 5.92825  & 5.92995&  5.93089&   5.93147 &  1.89&  5.93318\\

          \hline

  					   &4.18710 &  4.18820&  4.18832&  4.18839 &  1.95 &  4.18858\\
  					   &5.51379 &  5.51516&  5.51590&  5.51634 &  2.00 &  5.51758\\
 \multirow{2}{1cm}{0.49}       &5.51379 &  5.51516&  5.51590&  5.51634 &  2.00 &  5.51758\\
					   &6.53985 &  6.54111&   6.54180 & 6.54221  & 1.99  & 6.54337\\

\hline

					&4.17650 &  4.17672 &  4.17683 &  4.17691 &  1.95 &  4.17711\\
					&5.53828 &  5.53944 &  5.54007 &  5.54044 &  2.00 &  5.54149\\
\multirow{2}{1cm}{0.5}       &5.53828 &  5.53944 &  5.54007 &  5.54044 &  2.00 &  5.54149\\
					&6.53394 &  6.53515 &  6.53581 &  6.53621 & 1.99  &  6.53732\\

\bottomrule             
\end{tabular}
\end{center}
\caption{Computed lowest vibration frequencies for $k=0$ and different Poisson's ratio in the unitary square.}
\label{table:k=0_cuadrado}
\end{table}
We remark that this table presents a clear quadratic order of convergence as is expected according to Theorem \ref{thm:double}.

In the following experiment, we will prove the accuracy of the method for other polynomial degrees.
In particular, and for simplicity, we will consider  $\nu=0.49$ and $k=0,1,2$. 
\begin{table}[H]
\footnotesize
\singlespacing
\begin{center}
\begin{tabular}{c |c c c c |c| c }
\toprule
 $k $        & $N=20$             &  $N=30$         &   $N=40$         & $N=50$ & $\alpha$& $\omega_{extr}$  \\ 
 \midrule
					 &4.18639&   4.18757&   4.18710&   4.18820&   1.87 & 4.18860 \\
                                           &5.50235&   5.51084&   5.51379&   5.51516&   2.01 & 5.51757\\
 \multirow{2}{0.5cm}{0}      & 5.50235&   5.51084&   5.51379&  5.51515 &  2.01 & 5.51757\\
  					 &6.52943&   6.53714&   6.53985&   6.54111&  1.99 &  6.54336 \\

          \hline

  					   &4.18857&   4.18857&   4.18858&   4.18858&   3.50 &  4.18858\\
  					   &5.51760&   5.51758&   5.51758&   5.51758&   4.81 &  5.51758\\
 \multirow{2}{0.5cm}{1}        &5.51760&    5.51758 &  5.51758 &  5.51758 &  4.81  & 5.51758\\
					   &6.54340&   6.54337&   6.54336&   6.54336&   5.27 &  6.54336 \\

\hline

					&4.18858&   4.18858 &  4.18858&   4.18858 &  5.79 &  4.18858\\
					&5.51758&   5.51758 &  5.51758&   5.51758 &  3.55 &  5.51758\\
\multirow{2}{0.5cm}{2}       &5.51758&   5.51758 &  5.51758&   5.51758 &  3.55 &  5.51758\\
					&6.54336&   6.54336 &  6.54336&   6.54336 &  5.33 &  6.54336\\

\bottomrule             
\end{tabular}
\end{center}
\caption{Computed lowest vibration frequencies for $k=0,1,2$ and $\nu=0.49$ in the unitary square.}
\label{table:ks_cuadrado}
\end{table}

\begin{table}[H]
\footnotesize
\singlespacing
\begin{center}
\begin{tabular}{c |c c c c |c| c }
\toprule
 $k $        & $N=5$             &  $N=10$         &   $N=20$         & $N=40$ & $\alpha$& $\omega_{extr}$  \\ 
 \midrule
                                         & 4.13253 & 4.17434 & 4.18467  & 4.18763 &  1.98 &  4.18843\\
                                         &5.29856  & 5.47666  & 5.50805  & 5.51501 &  2.46  & 5.51589\\
 \multirow{2}{0.5cm}{0}    &5.32537  & 5.48047  & 5.50814  & 5.51513 &  2.42  & 5.51572\\
                                        &6.16802  & 6.47995  & 6.52542  & 6.53839 &  2.66  & 6.53763\\
                              %          &6.69332  & 7.06247  & 7.12027  & 7.13333 &  2.62  & 7.13375\\		                         

          \hline
                                           &4.18723  & 4.18847 &  4.18857  & 4.18858  & 3.63  & 4.18858\\
                                         & 5.51759 &  5.51757 &  5.51758 &  5.51758  & 10.00&   5.51758\\
 \multirow{2}{0.5cm}{1}     & 5.51978  & 5.51768 &  5.51758 &  5.51758  & 4.49  & 5.51758\\
                                          & 6.53153  & 6.54286  & 6.54332 &  6.54336  & 4.59  & 6.54335\\
                                  %       & 7.14990  & 7.13871  & 7.13761  & 7.13753  & 3.37  & 7.13751\\					   

\hline

                                       &4.18845  & 4.18857  & 4.18858  & 4.18858  & 5.86  & 4.18858\\
                                      & 5.51726 &  5.51758 &  5.51758 &  5.51758 &  6.25 &  5.51758\\
 \multirow{2}{0.5cm}{2} &  5.51755  & 5.51758  & 5.51758 &  5.51758 &  3.66 &  5.51758\\
                                     & 6.54304  & 6.54335  & 6.54336 &  6.54336 &  5.34 &  6.54336\\
                  %                   & 7.13815  & 7.13754  & 7.13753  & 7.13753 &  5.93 &  7.13753\\				

\bottomrule             
\end{tabular}
\end{center}
\caption{Computed lowest vibration frequencies for $k=0,1,2$ and $\nu=0.49$ in the unitary square.}
\label{table:ks_cuadradocul}
\end{table}

It is clear that for $k>0$ the convergence order of the eigenfrequencies is $\mathcal{O}(h^{2(k+1)})$ according to Theorem \ref{thm:double}. We observe from Tables \ref{table:ks_cuadrado} and \ref{table:ks_cuadradocul} that there are vibration frequencies that converge with optimal order, however, some orders are affected when $k>0$ and the meshes are sufficiently refined. For example, in Table \ref{table:ks_cuadrado}, a deterioration in the order of convergence is observed, due to the fact that the vibration frequencies obtained are very close to the extrapolated vibration frequencies. Also,  it is observed in Table \ref{table:ks_cuadradocul} that if coarse meshes are used the optimal order is recovered as is expected.
%We mention that for polynomial degrees larger that $k=2$ the results are similar.

To better visualize the errors, we concentrate on Table \ref{table:ks_cuadradocul}. For this purpose, in Figure \ref{FIG:erroresK} we show the relative errors   for the vibration frequencies $e_{\kappa_{hi}}$ where  $i\in\{1,2,3,4\}$, for  different polynomial degrees $k$, presented in Table \ref{table:ks_cuadradocul}.  In Figure \ref{FIG:erroresK} we present  lines of slopes $2(k+1)$ which we have obtained  using the extrapolated values obtained in Table \ref{table:ks_cuadradocul} as exact eigenvalues. 

Thus $e_{\kappa_{hi}}$ is defined by
$$e_{\kappa_{hi}}:=\dfrac{|\omega_{hi}-\omega_{extr,i}|}{|\omega_{extr,i}|},\qquad i=\{1,2,3,4\}.$$  
\begin{figure}[H]
	\begin{center}
		\begin{minipage}{14cm}
			\hspace{-1.1 cm}\centering\includegraphics[height=5.4cm, width=4.4cm]{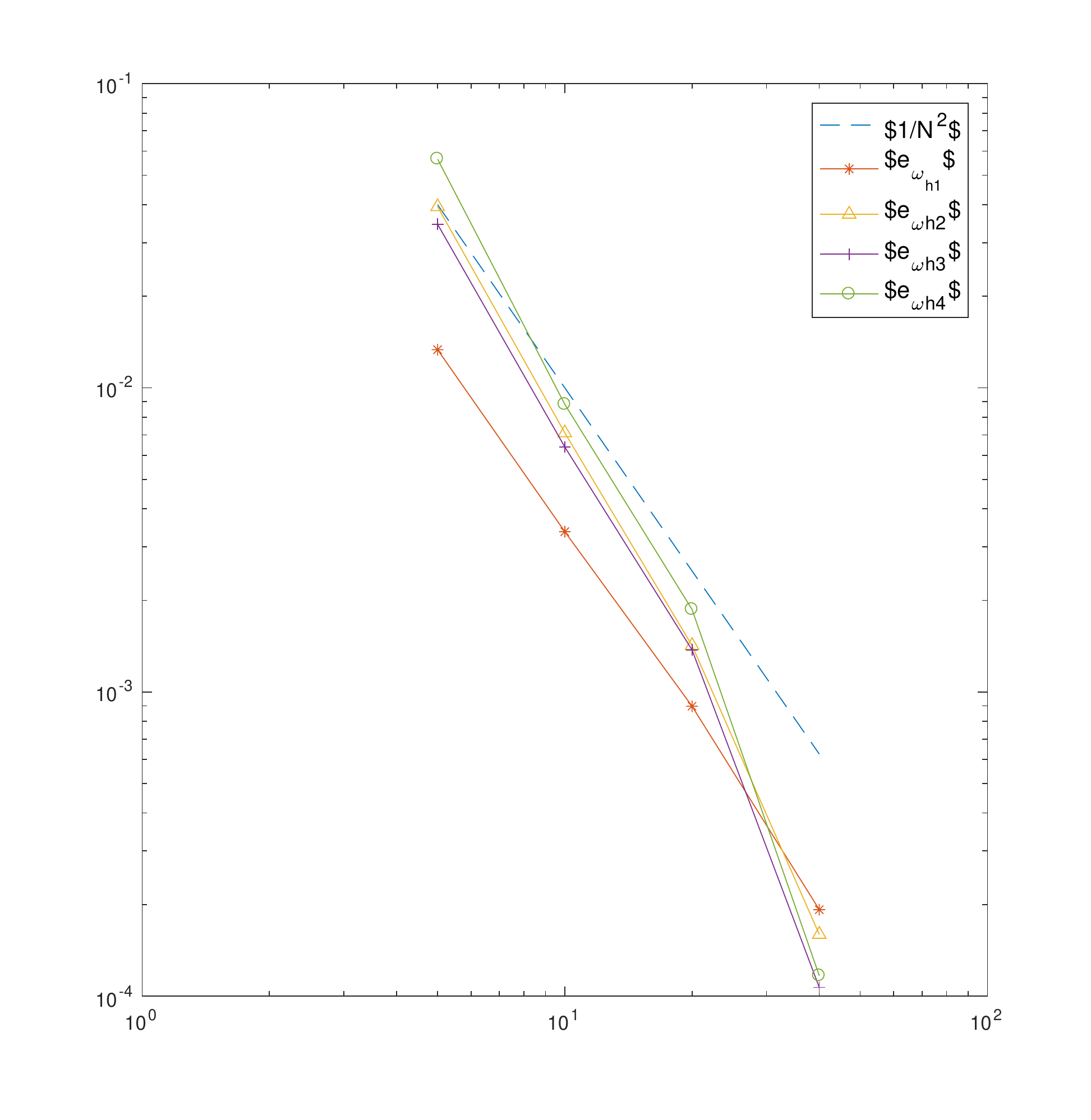}
			\centering\includegraphics[height=5.4cm, width=4.4cm]{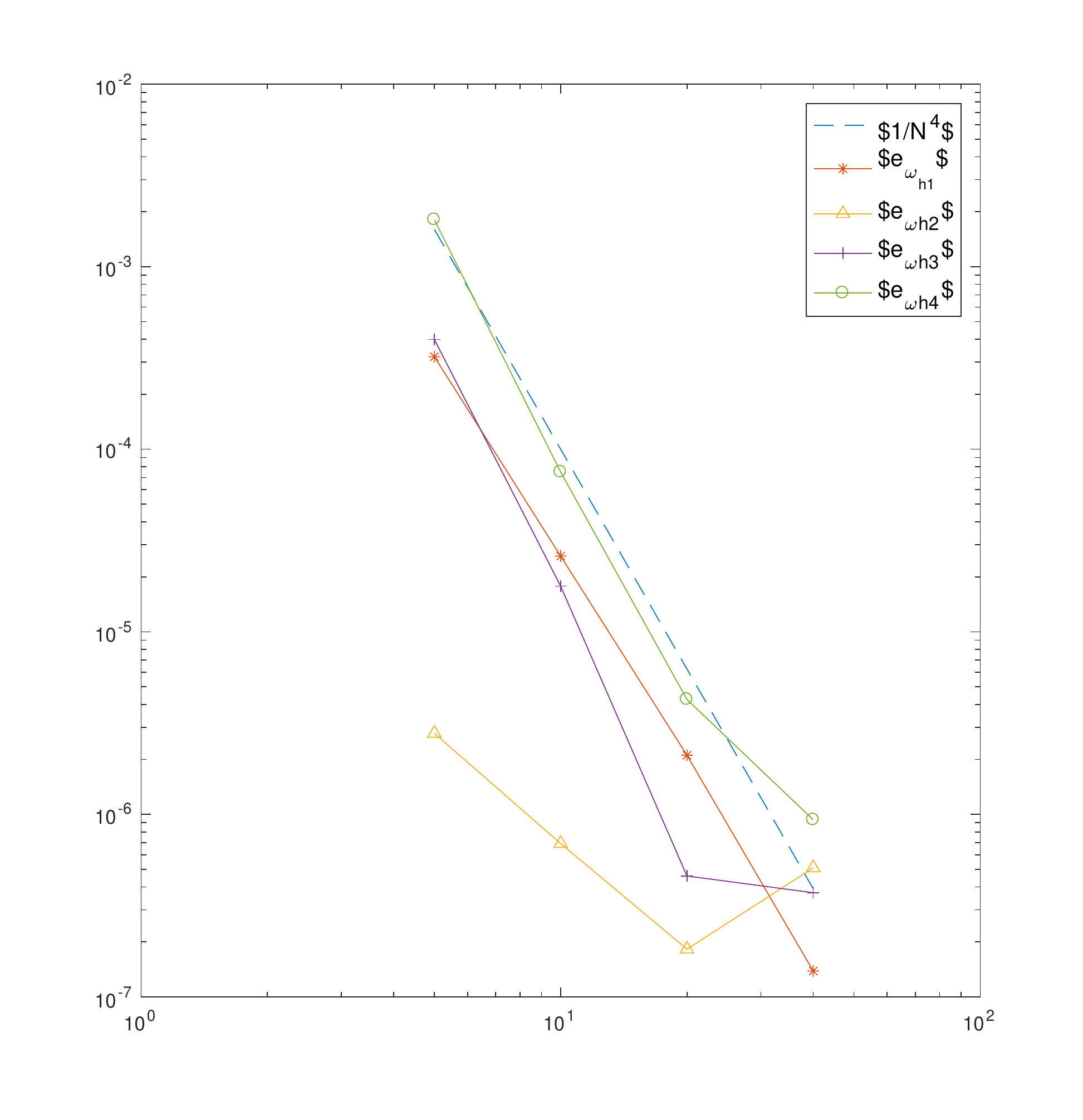}
			\centering\includegraphics[height=5.4cm, width=4.4cm]{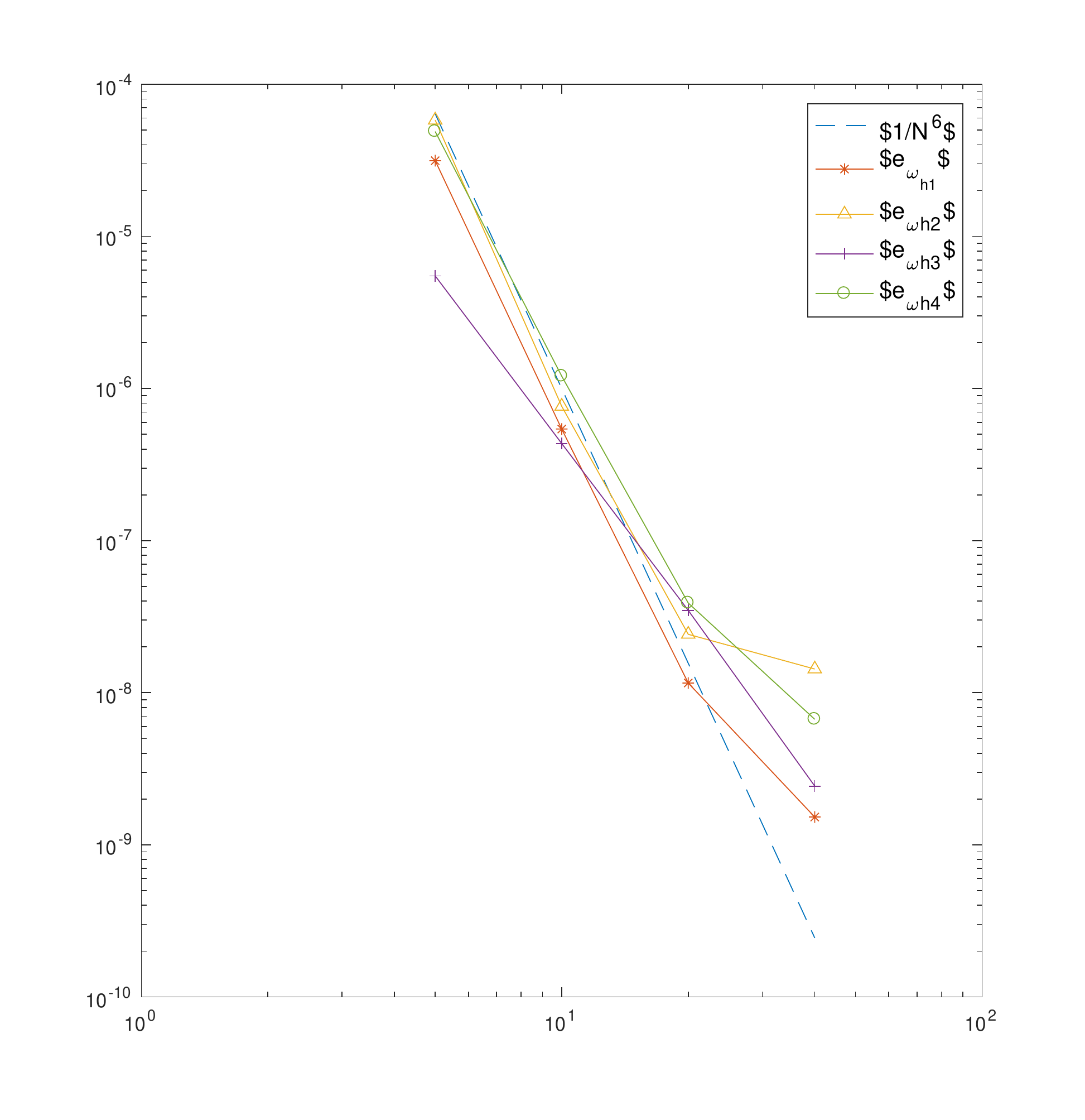}
                   \end{minipage}
		\caption{Relative errors for Table \ref{table:ks_cuadradocul} together with the expected optimal orders, for $k=0$ (left), $k=1$ (middle) and $k=2$ (right).}
		\label{FIG:erroresK}
	\end{center}
\end{figure}
From Figure \ref{FIG:erroresK} we confirm that for $k>0$ the discrete eigenvalues obtained are very similar to the extrapolated eigenvalue, which generates the precision errors in the correct convergence  obtained.

We present in Figure \ref{FIG:squares} plots of the first and third eigenfunctions of the spectral problem in the presented configuration. The colors
represent the magnitude of the displacement $\bu$ of the elastic structure.

%\begin{figure}[H]
%	\begin{center}
%		\begin{minipage}{11cm}
%		%	\centering\includegraphics[height=4.4cm, width=5.4cm]{}
%			%\centering\includegraphics[height=4.4cm, width=4.4cm]{}
%                   \end{minipage}
%		\caption{eigenfunction 1 and eigenfunction 5}
%		\label{FIG:plotdona2}
%	\end{center}
%\end{figure}

\begin{figure}[H]
	\begin{center}
		\begin{minipage}{14cm}
			\hspace{-1.0 cm}\centering\includegraphics[height=4.0cm, width=4.0cm]{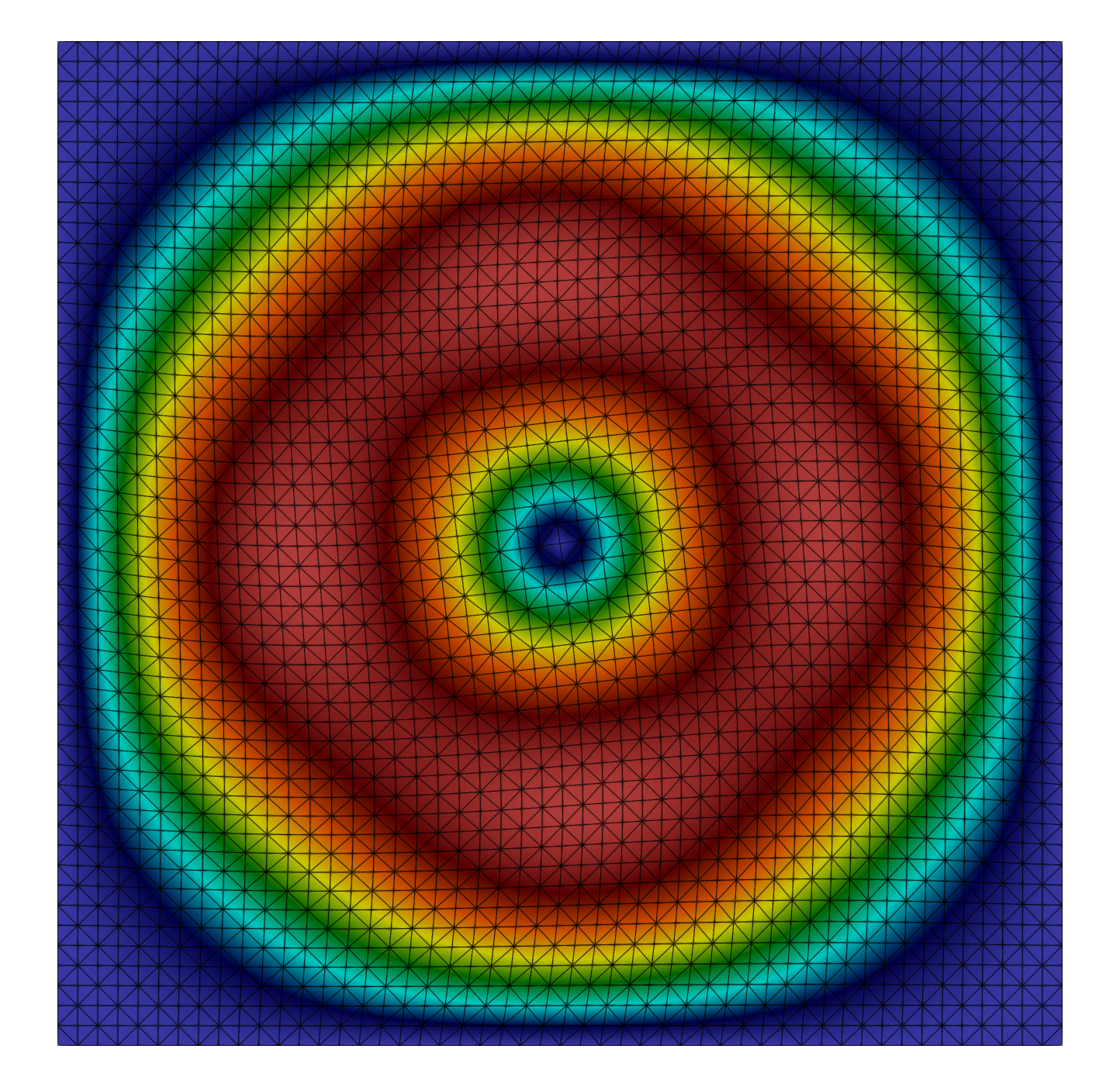}
			\centering\includegraphics[height=4.0cm, width=4.0cm]{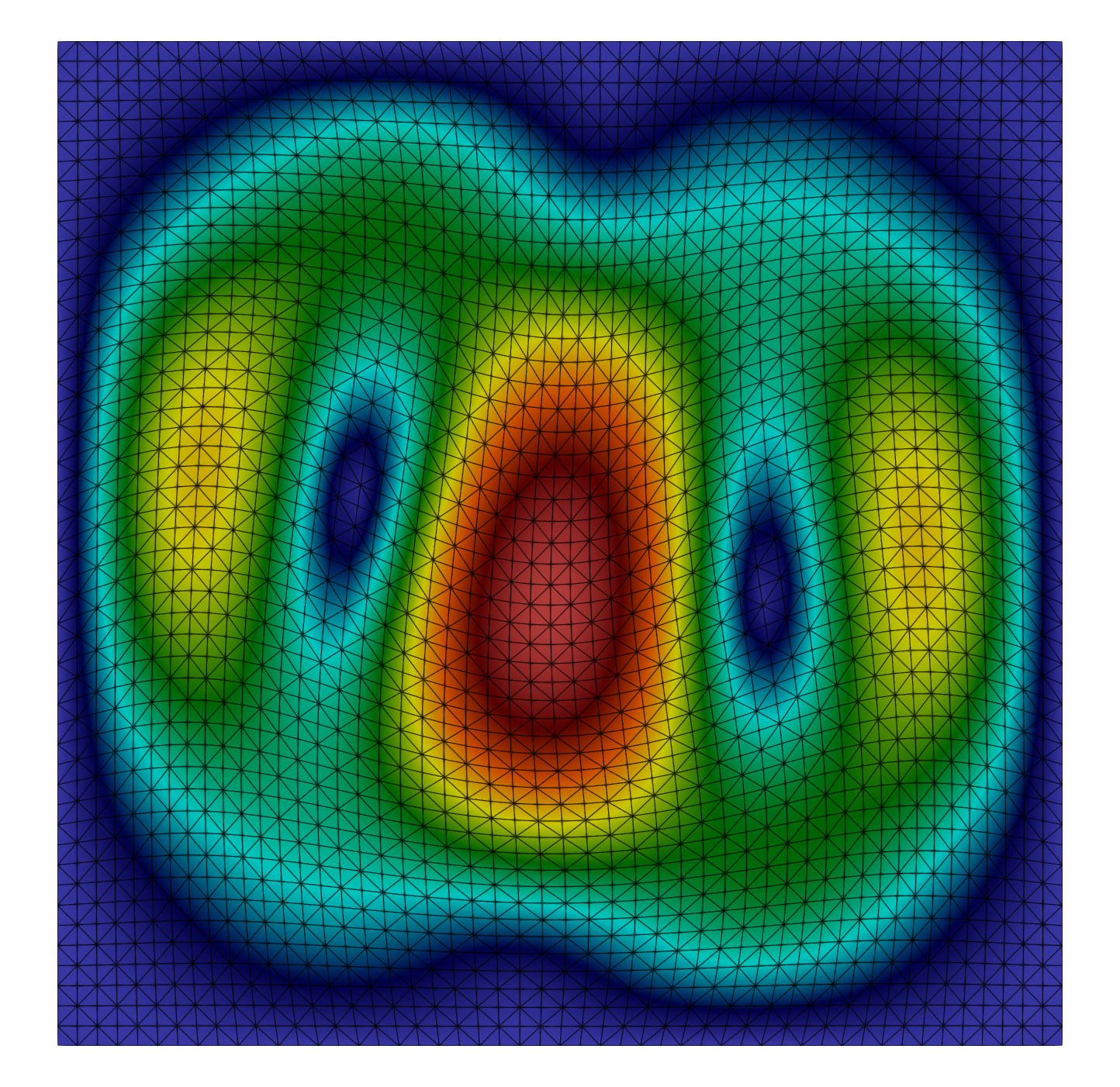}
			\centering\includegraphics[height=4.0cm, width=4.0cm]{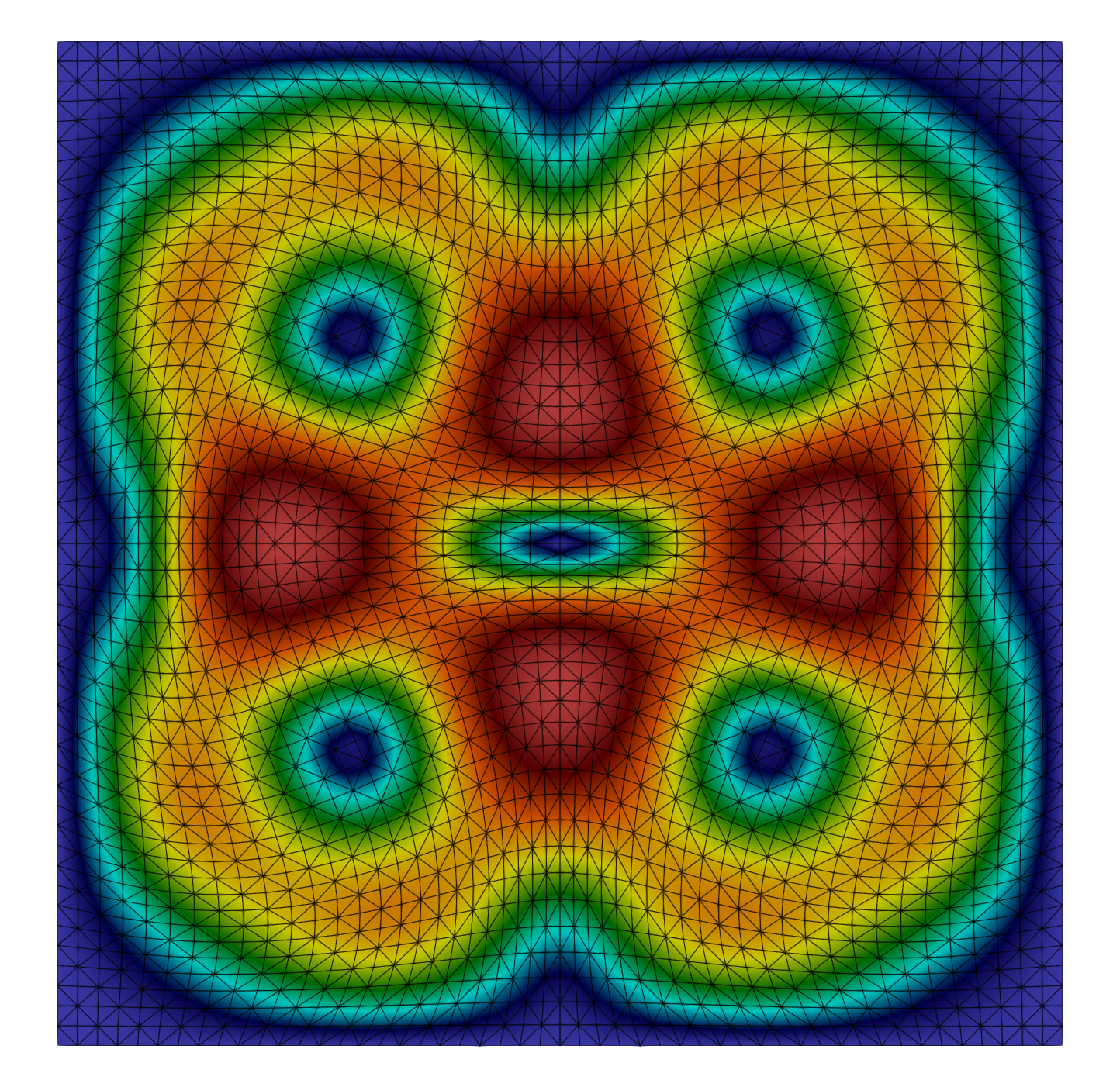}
			\end{minipage}
		\caption{Eigenfunctions corresponding to the first (left), second (middle) and fourth (right) eigenvalues with $\nu=0.49$, $N=50$ and $k=1$.}
		\label{FIG:squares}
	\end{center}
\end{figure}

\subsection{Unitary cube} In the following test we consider a three dimensional domain. For simplicity, we have
chosen the unitary cube $\O:=(0,1)^3$ and the lowest order finite element spaces (i.e. $k=0$). 
The meshes for this tests consist in regular tetrahedrons  and $N$, which we consider as refinement level, corresponds to the number of tetrahedrons in the plane $XY$, with partitions respect to the $X$ axis.
\begin{figure}[H]
	\begin{center}
		\begin{minipage}{14cm}
			\hspace{-1.1 cm}\centering\includegraphics[height=4.4cm, width=4.4cm]{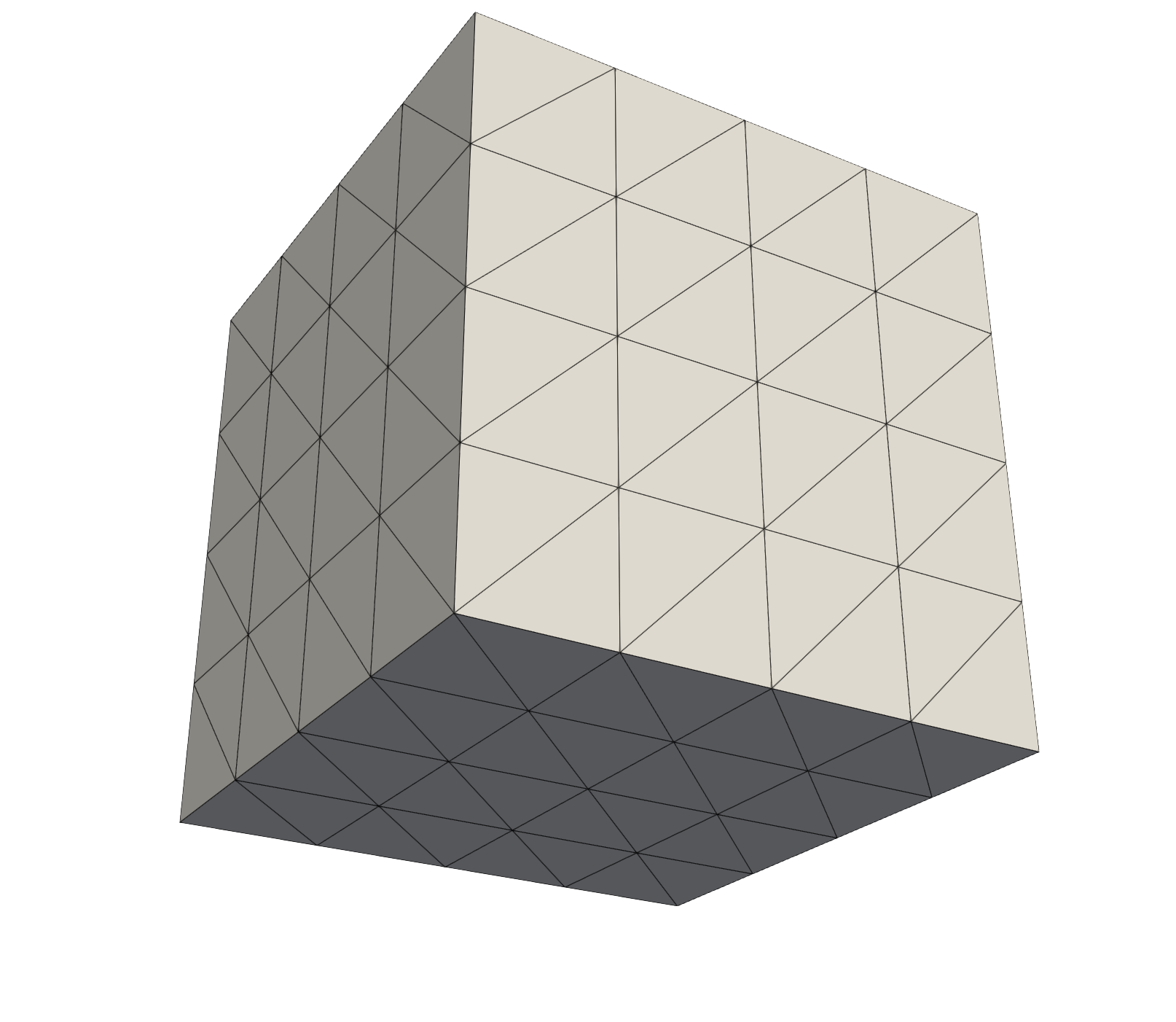}
			\centering\includegraphics[height=4.4cm, width=4.4cm]{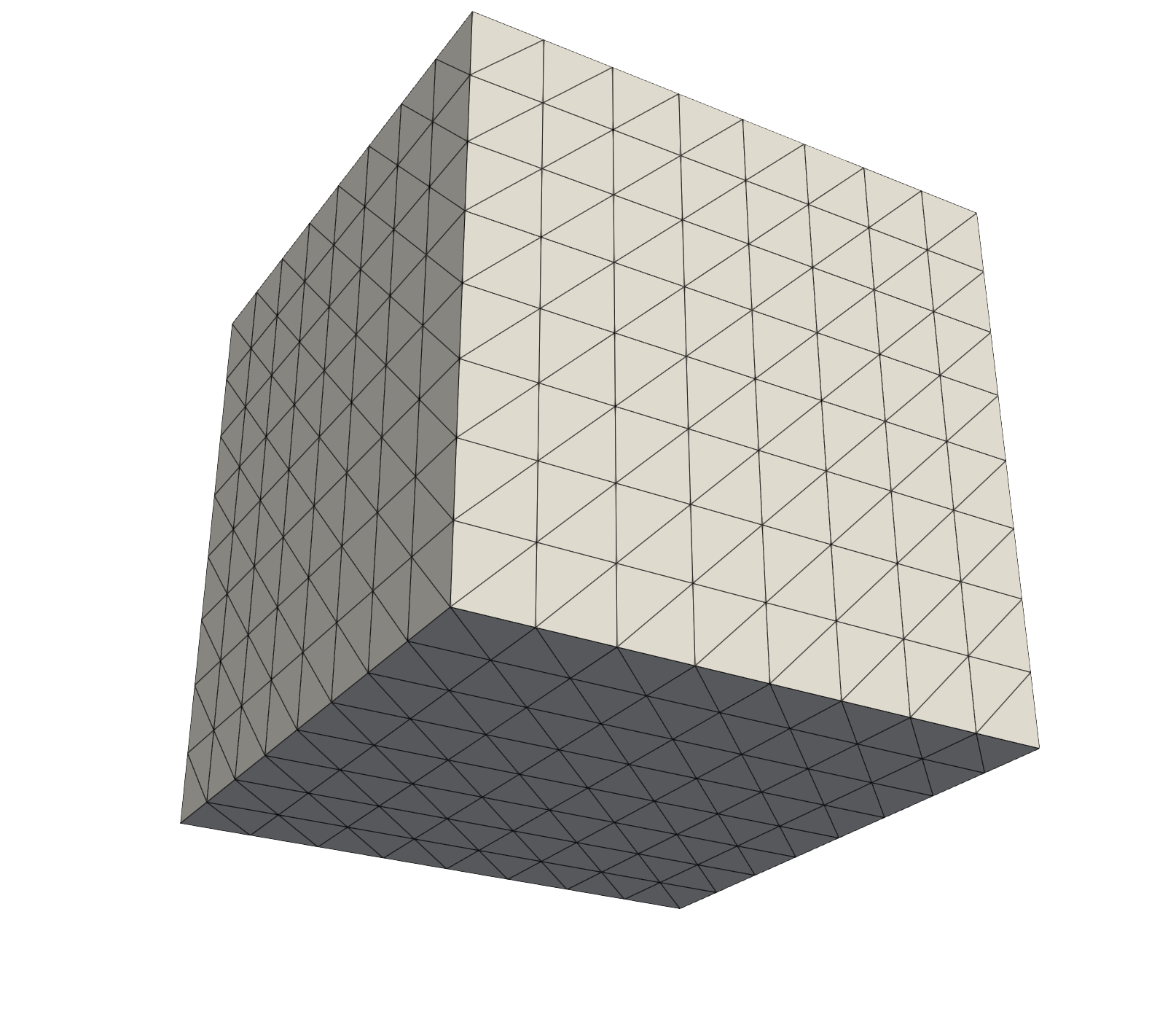}
			\centering\includegraphics[height=4.4cm, width=4.4cm]{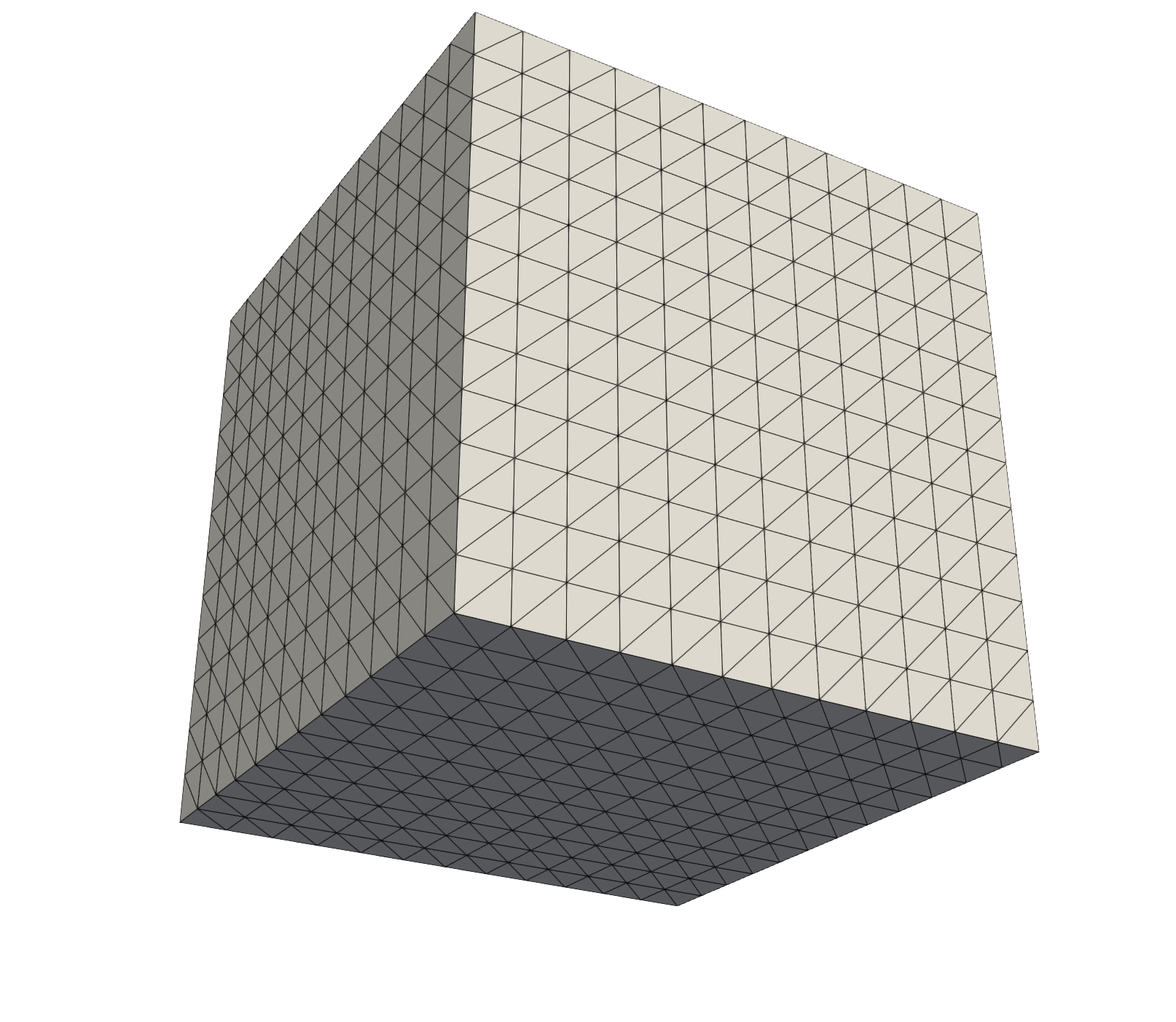}
                   \end{minipage}
		\caption{Examples of the meshes used in the unit cube. The left figure represents a mesh for $N=4$, the figure in for $N=8$  and the right figure for $N=12$.}
		\label{FIG:meshescube}
	\end{center}
\end{figure}

In Table \ref{table:cube} we report the first five  vibration frequencies  computed
for different values of $\nu$ and the corresponding orders of convergence and extrapolated values, considering the lowest order of approximation ($k=0$).
\begin{table}[H]
\footnotesize
\singlespacing
\begin{center}
\begin{tabular}{c |c c c c |c| c }
\toprule
 $\nu $        & $N=14$             &  $N=16$         &   $N=18$         & $N=20$ & $\alpha$& $\omega_{extr}$  \\ 
 \midrule
                                                 &4.43174&   4.43807&   4.44251 &  4.44573&   1.83 &  4.46093 \\
                                                 &4.44878&   4.45122&   4.45296 &  4.45424&   1.72 &  4.46068\\
 \multirow{2}{1cm}{0.35}           &4.44878&   4.45122&  4.45296  & 4.45424 &  1.72  & 4.46068\\
                                                 &4.76330&   4.76410&   4.76617 &  4.76702&   1.91 &  4.77083\\
                                                 &4.76645&   4.76740&   4.76807 &  4.76856&   1.83 &  4.77085\\
          
          \hline

                                            &4.61097  & 4.61289 & 4.61423 & 4.61521&   1.87&   4.61968\\
                                            &4.61396  & 4.61518 & 4.61604 & 4.61667&   1.79&   4.61970\\
 \multirow{2}{1cm}{0.45}      &4.61396  & 4.61518 & 4.61604 & 4.61667&   1.79&   4.61970\\
                                            &5.13647  & 5.15367 & 5.16565 & 5.17432&   1.88&   5.21395\\
                                            &5.17493  & 5.18332 & 5.18919 & 5.19345&   1.85&   5.21328  \\   
\hline

                                            &4.54302&  4.54513  & 4.54660 & 4.54767  &    1.85 &  4.55266\\
                                            &4.54594&  4.54735  & 4.54836 & 4.54909  &    1.76 &  4.55271\\
\multirow{2}{1cm}{0.5}         &4.54594 &  4.54735  &4.54836  & 4.54909  &1.76   & 4.55271\\
                                            &5.52484&  5.52524  & 5.52551 & 5.52571 &	 2.13   &5.52646\\
                                            &5.52484&  5.52524  & 5.52551 & 5.52571 &	 2.13   &5.52646\\
       
\bottomrule             
\end{tabular}
\end{center}
\caption{Computed lowest vibration frequencies for $k=0$ and different Poisson's ratio in the unitary cube.}
\label{table:cube}
\end{table}

It is clear from Table \ref{table:cube} that the double order of convergence for the eigenvalues is obtained in this geometry setting.
Also, for the limit case $\nu=0.5$, the method in the three dimensional domain works perfectly and approximates the eigenvalues 
with the expected double order $\mathcal{O}(h^2)$.

In Figure \ref{FIG:cubes} we present plots of the computed eigenfunctions in the unitary cube where the colors are as in the previous example. Also the plots show the deformation of the cube for each eigenfunction.
\begin{figure}[H]
	\begin{center}
		\begin{minipage}{11cm}
		\centering\includegraphics[height=4.4cm, width=5.4cm]{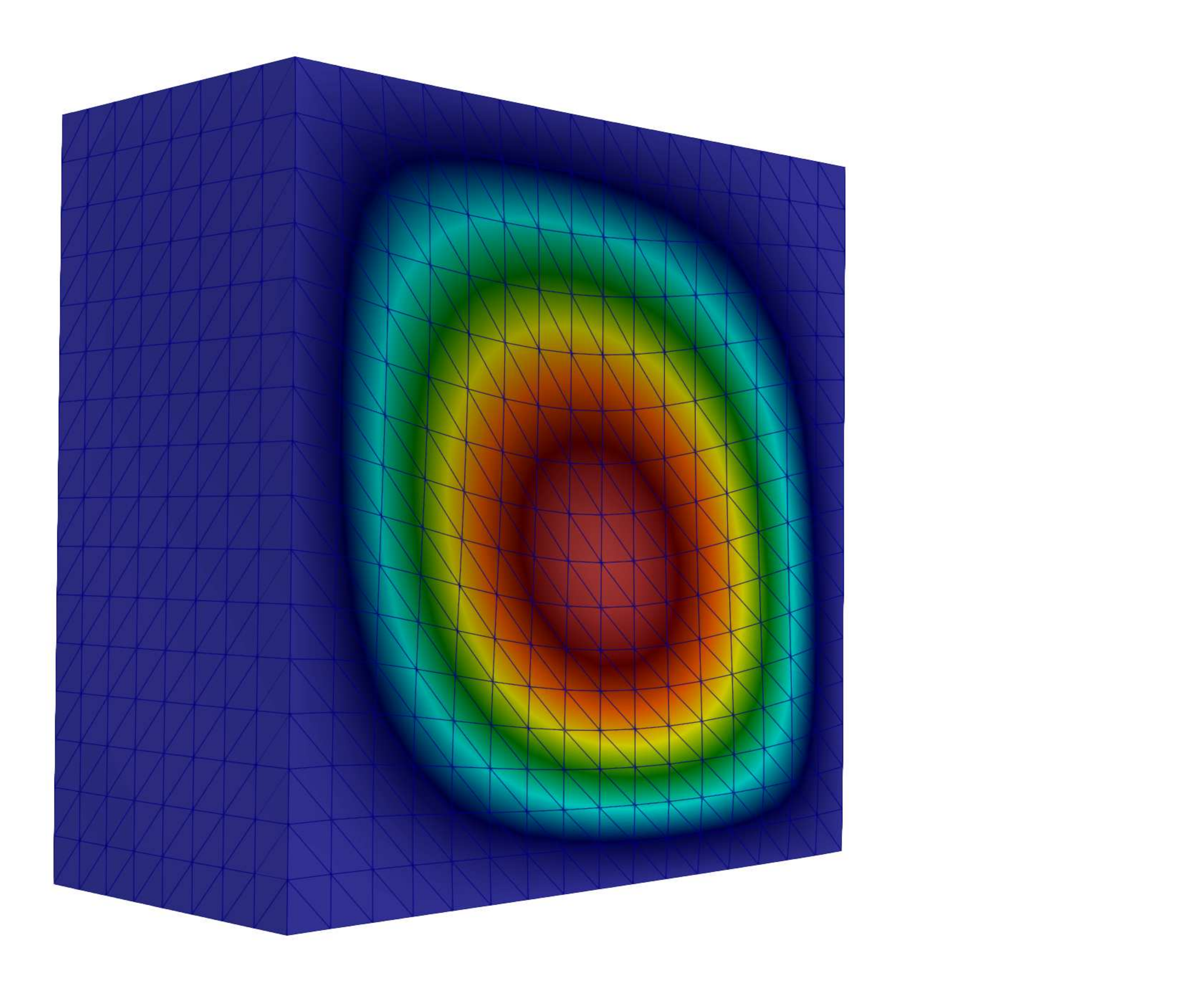}
		\centering\includegraphics[height=4.4cm, width=5.4cm]{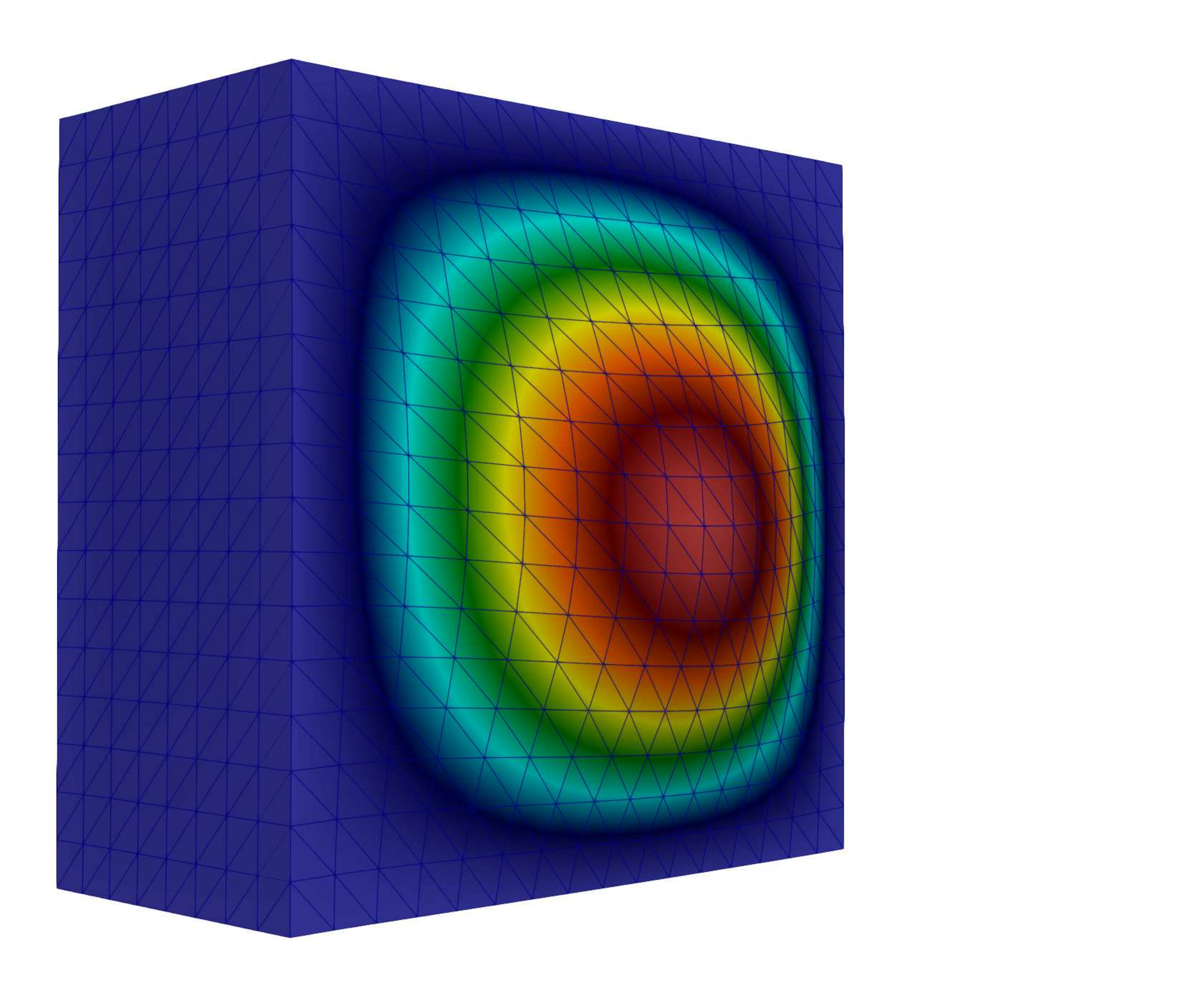}
			%\centering\includegraphics[height=4.4cm, width=5.4cm]{}
			\centering\includegraphics[height=4.4cm, width=5.4cm]{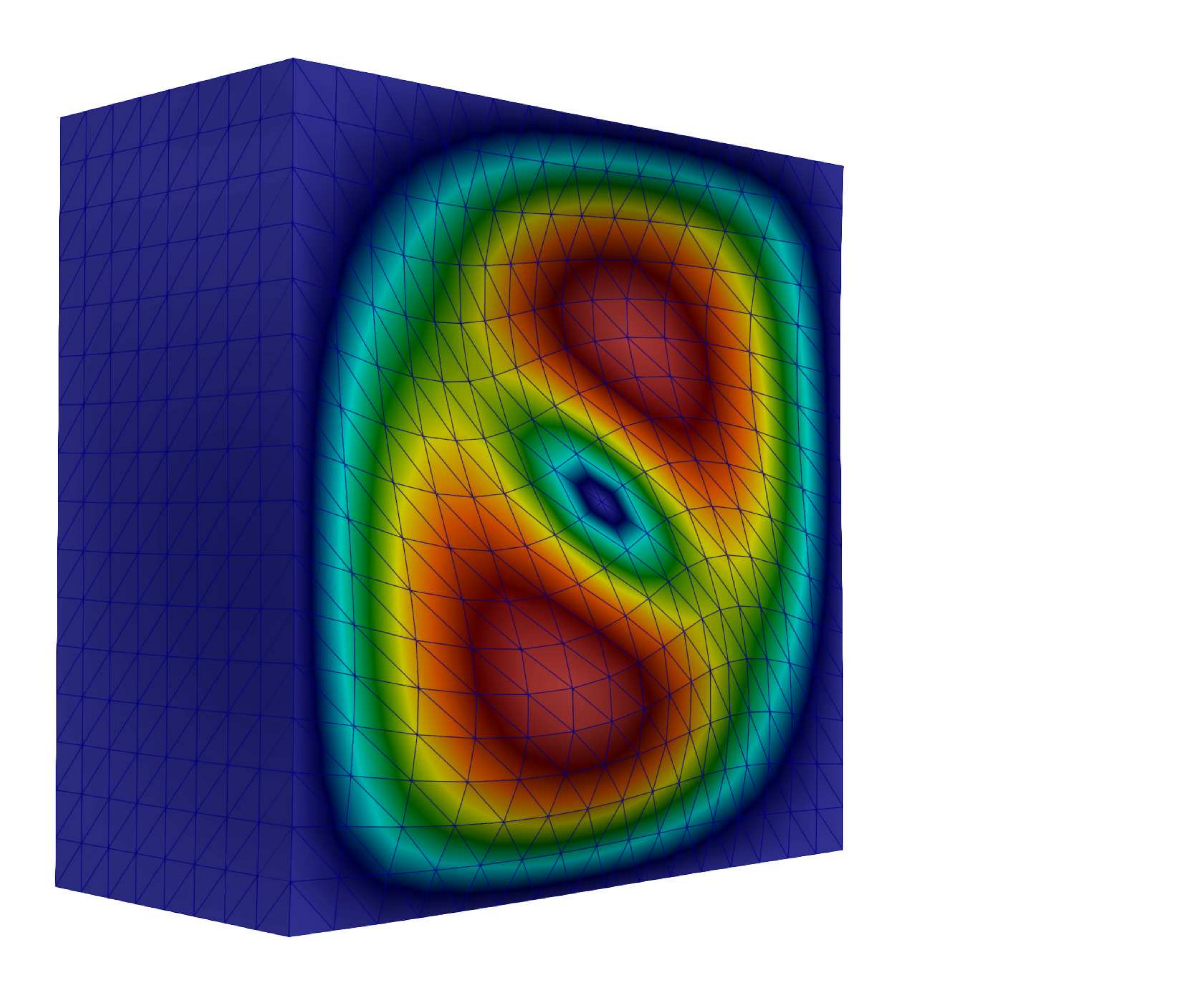}
			\centering\includegraphics[height=4.4cm, width=5.4cm]{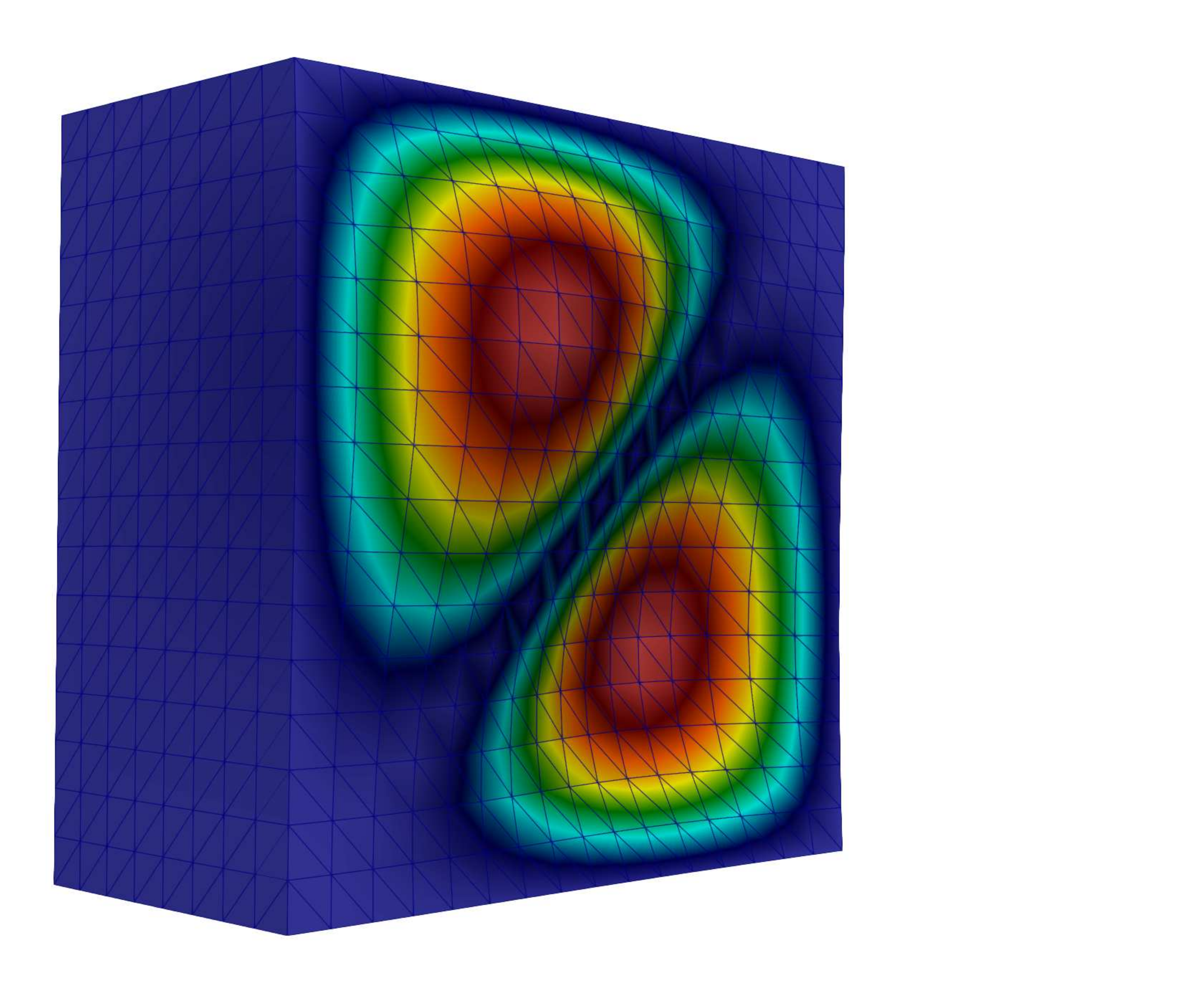}
                   \end{minipage}
		\caption{Eigenfunctions corresponding to the first (upper left), second and third (upper right), fourth (bottom left) and fifth (bottom right) eigenvalues, for $\nu=0.35$, $N=10$ and $k=0$.}
		\label{FIG:cubes}
	\end{center}
\end{figure}
\subsection{Circular domain}

For this test, we will consider as domain the unitary circle $\Omega_C:=\{(x,y)\in\mathbb{R}^2\,:\, x^2+y^2 <1\}$. Clearly, with this test we are considering a particular case where we will approximate a curved domain with triangles. This geometrical features will be reflected in the order of convergence, as it happens, for instance, in \cite{MR4077220} for the DG method.

In Tables \ref{table:circle1}, \ref{table:circle2} and \ref{table:circle3}, the parameter $N$ represents the refinement level of each mesh that, in this case, is such that $N$  is proportional to  $1/h$, where $h$ is the mesh size. In Figure \ref{meshes_circle} we present plots of some meshes for $\Omega_C$.

\begin{figure}[H]
	\begin{center}
		\begin{minipage}{14cm}
			\centering\includegraphics[height=3.4cm, width=3.4cm]{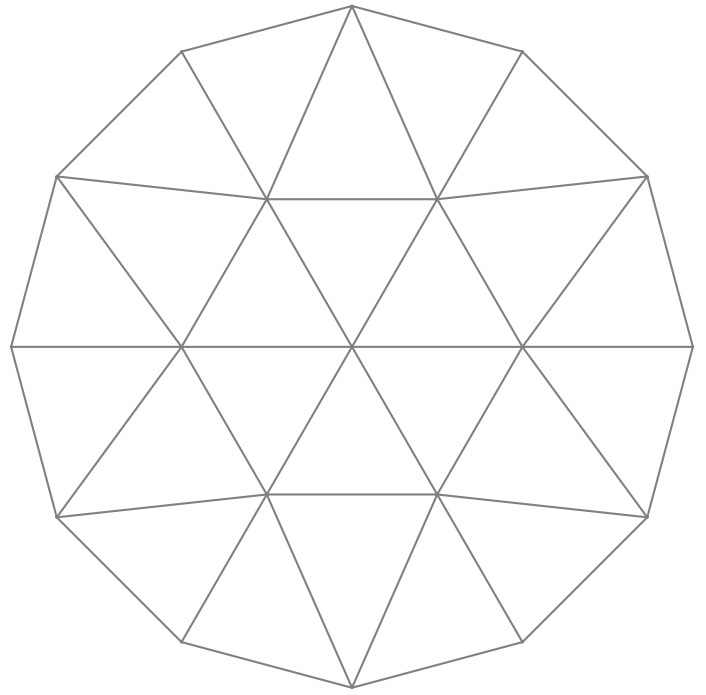}
			\centering\includegraphics[height=3.4cm, width=3.4cm]{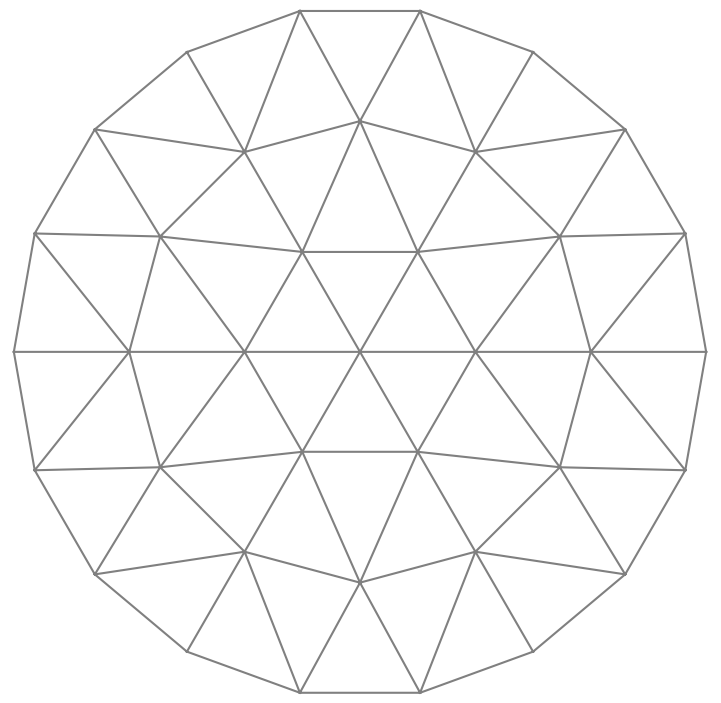}
			\centering\includegraphics[height=3.4cm, width=3.4cm]{graphics/circular3.png}
			%\centering\includegraphics[height=4.4cm, width=5.4cm]{graphics/modo5c.eps}
                   \end{minipage}
		\caption{Examples of the meshes used in the circular domain.}
		\label{meshes_circle}
	\end{center}
\end{figure}

In the following table we report the computed first five vibration frequencies with our method, considering different values of $\nu$ and different polynomial degrees.
\begin{table}[H]
\footnotesize
\singlespacing
\begin{center}
\begin{tabular}{c |c c c c |c| c }
\toprule
 $\nu $        & $N=20$             &  $N=30$         &   $N=40$         & $N=50$ & $\alpha$& $\omega_{extr}$  \\ 
 \midrule
                                                 &2.33142 &  2.33193 &  2.33211  &  2.33216&   2.40 &  2.33225\\
                                                 &2.33142 &  2.33193 &  2.33211  &  2.33219&   1.98 &  2.33234\\
 \multirow{2}{1cm}{0.35}           &2.33344 &  2.33250 &  2.33229 &  2.33219&   2.25 &  2.33110\\
                                                 &3.32033 &  3.31881 &  3.31829  & 3.31805 &  2.02  & 3.31762\\
                                                 &3.32033 &  3.31881 &  3.31829  & 3.31805 &  2.02  & 3.31762\\
          
          \hline

                                            &2.22115 &  2.22032 &  2.22002 &   2.21989&   2.00&   2.21965\\
                                            &2.95910 &  2.95854 &  2.95834 &  2.95825 &  1.99 &  2.95809\\
 \multirow{2}{1cm}{0.49}      &2.95910&   2.95854&   2.95834&   2.95825&   1.99&   2.95809\\
                                            &3.68395 &  3.68339 &  3.68319 &  3.68310 &  1.89 &  3.68291\\
                                            &3.68395 &  3.68339 &  3.68319 &  3.68310 &  1.89 &  3.68291 \\
\hline

                                            &2.21374 &  2.21290 &  2.21261&   2.21248 &  2.00 &  2.21224\\
                                            &2.96637 &  2.96564 &  2.96538&   2.96526 &  2.00 &  2.96505\\
\multirow{2}{1cm}{0.5}         &2.96637 &  2.96564 &  2.96538&   2.96526 &  2.00 &  2.96505\\
                                            &3.68490 &  3.68419 &  3.68393&   3.68381 &  1.92 &  3.68358\\
                                            &3.68490 &  3.68419 &  3.68393&   3.68381 &  1.92 &  3.68358\\
       
\bottomrule             
\end{tabular}
\end{center}
\caption{Computed lowest vibration frequencies for $k=0$ and different Poisson's ratio in the unitary circle.}
\label{table:circle1}
\end{table}

\begin{table}[H]
\footnotesize
\singlespacing
\begin{center}
\begin{tabular}{c |c c c c |c| c }
\toprule
 $\nu $        & $N=10$             &  $N=20$         &   $N=30$         & $N=40$ & $\alpha$& $\omega_{extr}$  \\ 
 \midrule
                                                 &2.33244 &  2.33214&   2.33204&   2.33199&   2.00 &  2.33190\\
                                                 &2.33287 &  2.33258&   2.33247&   2.33243&   2.00 &  2.33234\\
 \multirow{2}{1cm}{0.35}           &2.33287 &  2.33258&   2.33247&   2.33243&   2.00 &  2.33234\\
                                                 &3.31838 &  3.31795&   3.31781&   3.31774&   2.01 &  3.31762\\
                                                 &3.31838 &  3.31795&   3.31781&   3.31774&   2.01 &  3.31762\\
          
          \hline

                                            &2.22016 &  2.21987 & 2.21977&  2.21973 &  2.00 &  2.21965\\
                                            &2.95877 &  2.95839 & 2.95826&  2.95820 &  2.01 &  2.95809\\
 \multirow{2}{1cm}{0.49}      &2.95877 &  2.95839 & 2.95826&  2.95820 &  2.01 &  2.95809\\
                                            &3.68378 &  3.68330 & 3.68314&  3.68306 &  2.02 &  3.68293\\
                                            &3.68378 &  3.68330 & 3.68314&  3.68306 &  2.02 &  3.68293  \\
\hline

                                            &2.21274 &  2.21246&  2.21236&  2.21232 &  2.00 &  2.21224\\
                                            &2.96573 &  2.96535&  2.96522&  2.96516 &  2.01 &  2.96505\\
\multirow{2}{1cm}{0.5}         &2.96573 &  2.96535&  2.96522&  2.96516 &  2.01 &  2.96505\\
                                            &3.68444 &  3.68396&  3.68380&  3.68372 &  2.02 &  3.68359\\
                                            &3.68444 &  3.68396&  3.68380&  3.68372 &  2.02 &  3.68359\\
       
\bottomrule             
\end{tabular}
\end{center}
\caption{Computed lowest vibration frequencies for $k=1$ and different Poisson's ratio in the unitary circle.}
\label{table:circle2}
\end{table}

\begin{table}[H]
\footnotesize
\singlespacing
\begin{center}
\begin{tabular}{c |c c c c |c| c }
\toprule
 $\nu $        & $N=10$             &  $N=20$         &   $N=30$         & $N=40$ & $\alpha$& $\omega_{extr}$  \\ 
 \midrule
                                                 &2.33243 &  2.33213 &  2.33203&   2.33198 &  2.01&   2.33190\\
                                                 &2.33287 &  2.33257 &  2.33247&   2.33242 &  2.00&   2.33234\\
 \multirow{2}{1cm}{0.35}           &2.33287 &  2.33257 &  2.33247&   2.33242 &  2.00&   2.33234\\
                                                 &3.31837 &  3.31795 &  3.31780&   3.31773 &  2.00&   3.31761\\
                                                 &3.31837 &  3.31795 &  3.31780&   3.31773 &  2.00&   3.31761\\
          
          \hline

                                            &2.22015 &  2.21987 &  2.21977 &  2.21972&   2.01&    2.21964\\
                                            &2.95876 &  2.95839&   2.95825 &  2.95819&    2.01&   2.95809\\
 \multirow{2}{1cm}{0.49}      &2.95876 &  2.95839 &  2.95825 &  2.95819&   2.01&   2.95809\\
                                            &3.68377 &  3.68330 &  3.68313 &  3.68305&   2.01&   3.68292\\
                                            &3.68377 &  3.68330 &  3.68313 &  3.68305&   2.01&   3.68292\\
\hline

                                            &2.21274&   2.21246&   2.21236 &  2.21231&  2.01 &  2.21223\\
                                            &2.96573&   2.96535&   2.96522 &  2.96516 &  2.01  & 2.96505\\
\multirow{2}{1cm}{0.5}        &2.96573&    2.96535&   2.96522  & 2.96516 &  2.01 & 2.96505\\
                                            &3.68443&   3.68396&   3.68379&  3.68372 &  2.01  & 3.68358\\
                                            &3.68443&   3.68396&   3.68379&  3.68372 &  2.01  & 3.68358\\
       
\bottomrule             
\end{tabular}
\end{center}
\caption{Computed lowest vibration frequencies for $k=2$ and different Poisson's ratio in the unitary circle.}
\label{table:circle3}
\end{table}

We observe from tables \ref{table:circle1},  \ref{table:circle2} and  \ref{table:circle3} that for different values of the Poisson ratio, even in the limit case when $\lambda=+\infty$, the
proposed method approximates with high accuracy the eigenvalues in the circle. An important phenomena in this experiment is that, independent of the polynomial
that we are considering, the order of convergence is $\mathcal{O}(h^2)$ for any $k\geq 0$ and Poisson ratio $\nu$. We remark that we obtain these orders  of convergence because of the variational crime committed by approximating the curved domain with a polygonal one. 

\begin{figure}[H]
	\begin{center}
		\begin{minipage}{14cm}
		
			\hspace{-0.9cm}\centering\includegraphics[height=4.4cm, width=4.1cm]{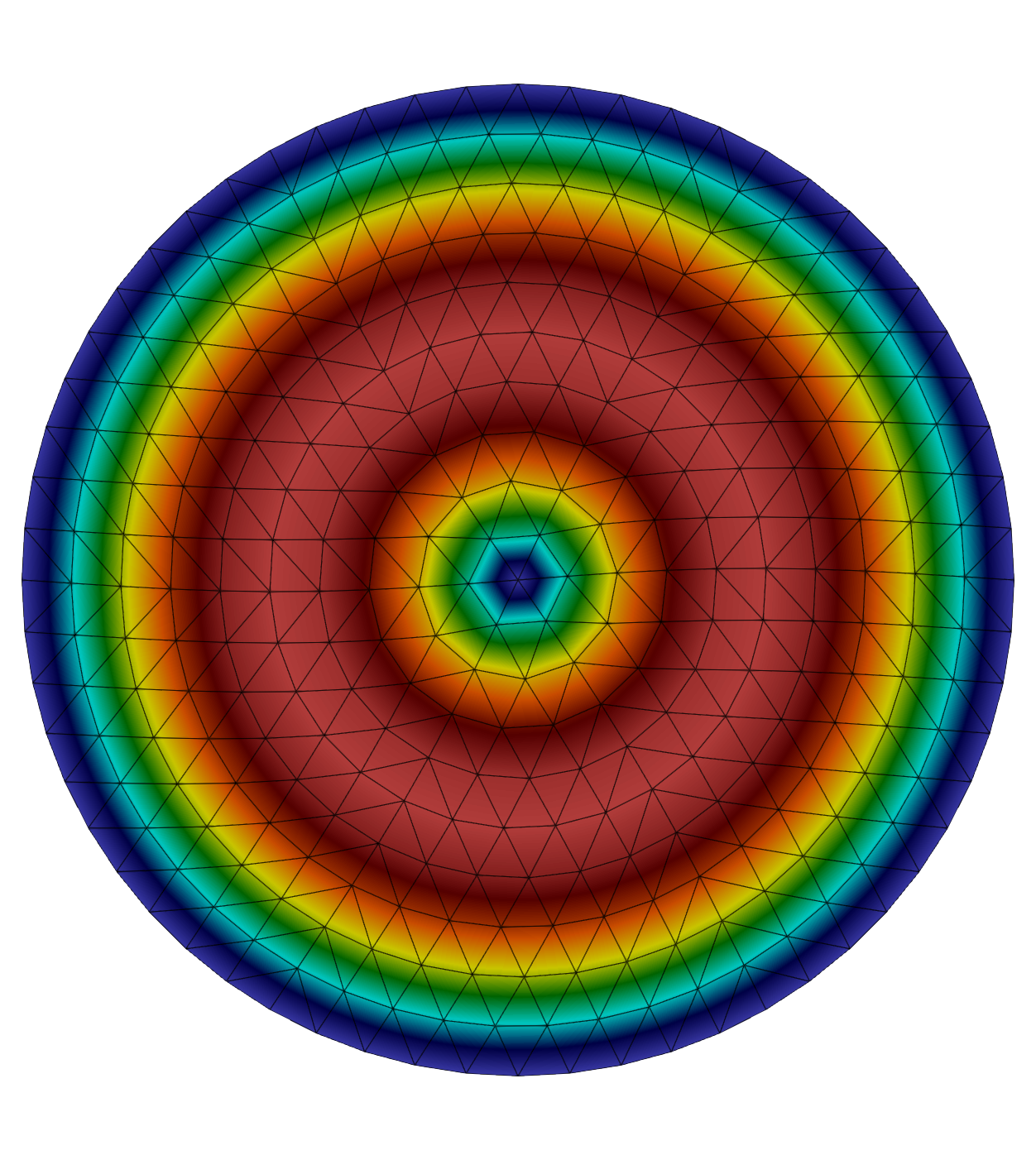}
			\centering\includegraphics[height=4.4cm, width=4.1cm]{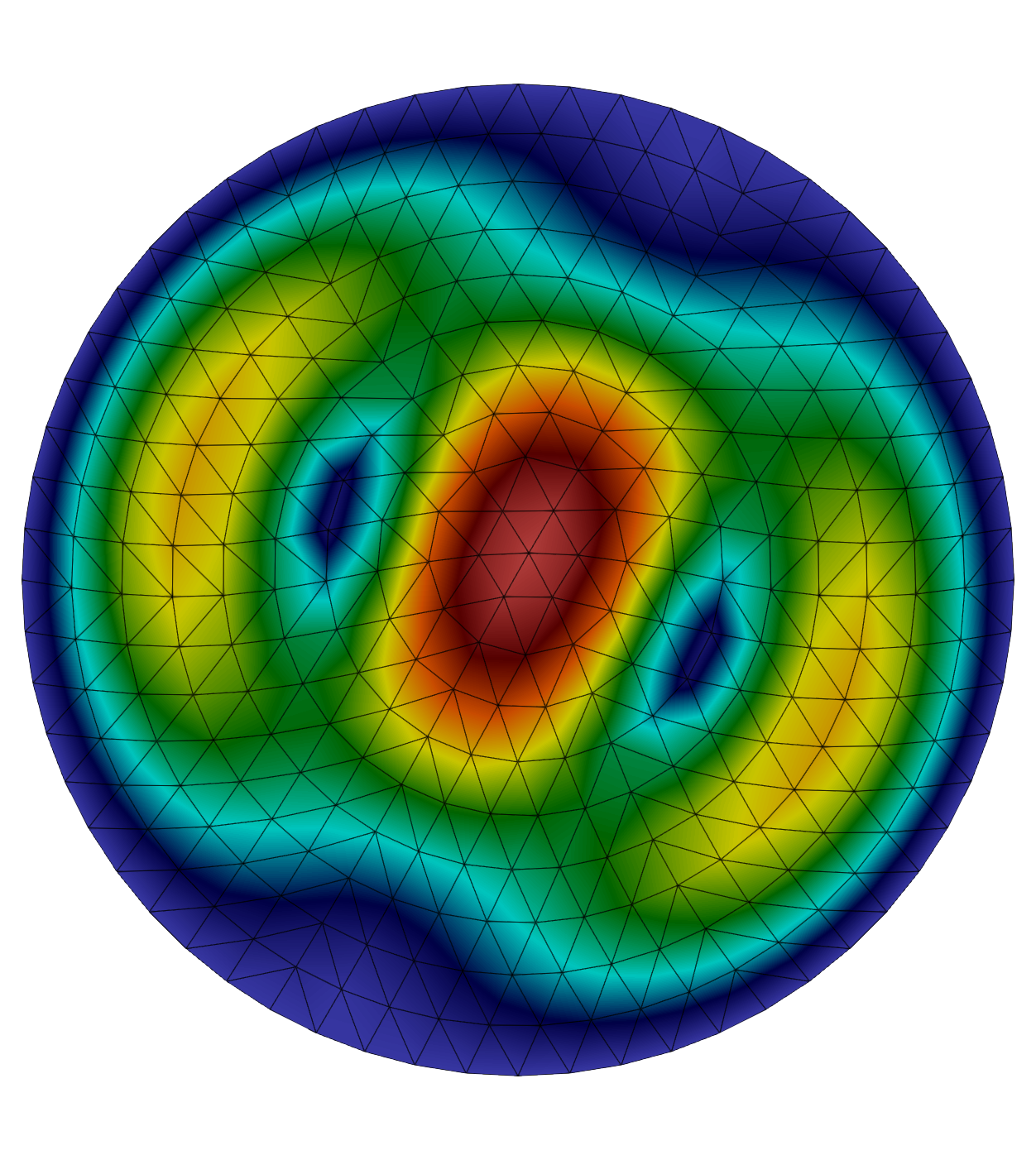}
			\centering\includegraphics[height=4.4cm, width=4.1cm]{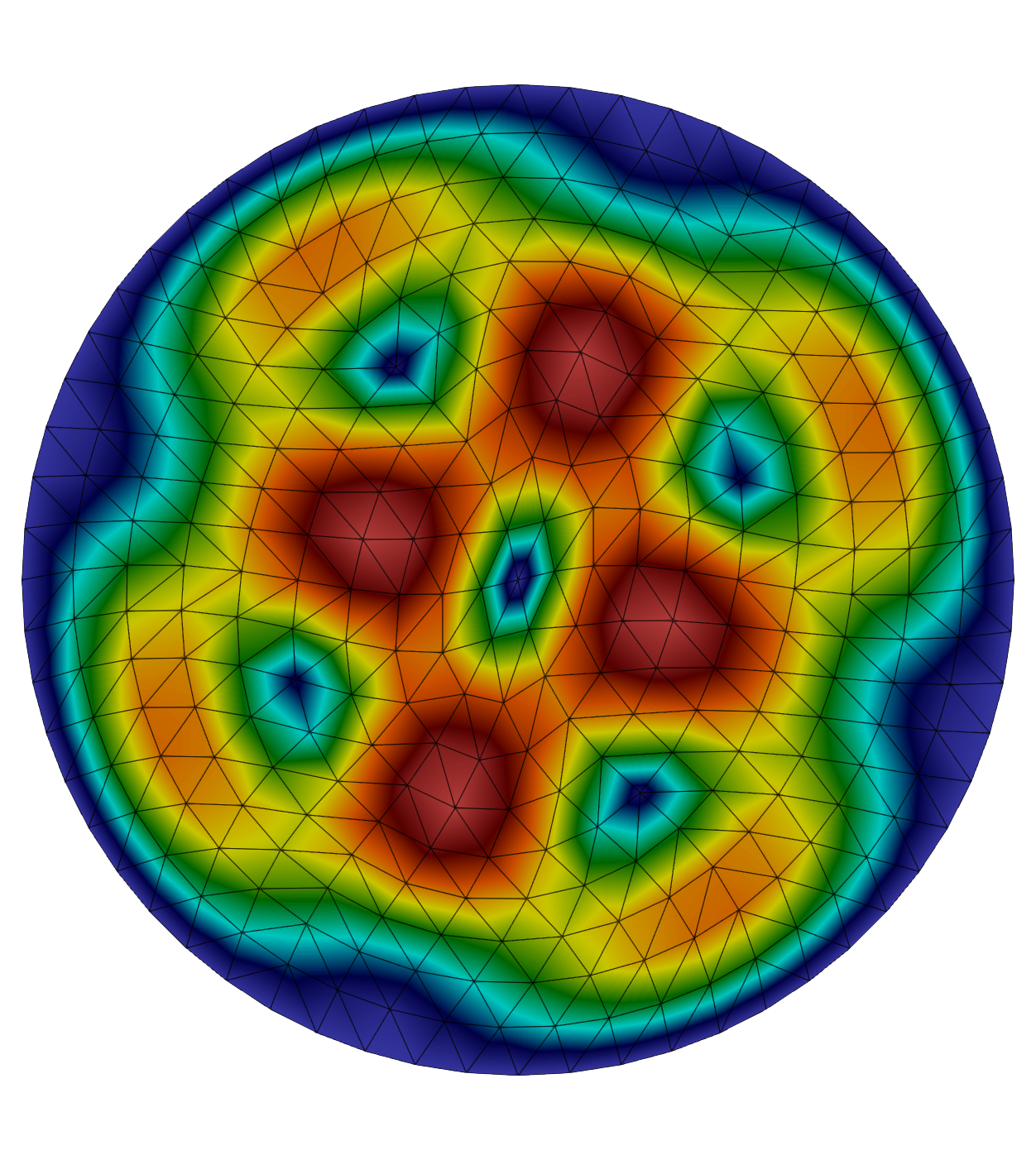}
			%\centering\includegraphics[height=4.4cm, width=5.4cm]{graphics/modo5c.eps}
                   \end{minipage}
		\caption{Eigenfunctions corresponding to the first (upper left), second and third (upper right), and fourth and fifth (bottom)  computed eigenfunctions with $\nu=0.49$, $N=10$ and $k=1$.}
		\label{FIG:plotcircle}
	\end{center}
\end{figure}

\subsection{L-shaped domain}  In this section we consider a non-convex domain that we will call the L-shaped domain which is defined by $\Omega_L:= (-1,1)^2\setminus [-1,0]^2.$
%With the boundary conditions
%considered in our model problem, it turns out that the regularity exponents $\ws$
%defined in Lemma... are those given in Table~\ref{TABLA5}
%for different values of the Poisson ratio $\nu$ (cf. \cite{MMR1}
%and the references therein). 

\begin{table}[H]
\footnotesize
%\singlespacing
\begin{center}
\begin{tabular}{c c}
\toprule
 $\nu$ & $\ws$  \\ 
 \midrule
0.35 & 0.6797\\
0.49&0.5999\\
0.5&0.5946\\
\bottomrule             
\end{tabular}
\end{center}
\caption{Sobolev regularity exponents.}
\label{TABLA:s0}
\end{table}

The eigenfunctions of this problem may present singularities  due the reentrant angles of the domain. According to \cite{MR840970} in this case the estimate Lemma \ref{lmm:add_eigen} holds true for all $s<0.5445$. Comparing this value with those of Table \ref{TABLA:s0}, it is observed that the strongest singularity can arise from the reentrant angle.  Then, the theoretical order of convergence satisfies $2s\geq 1.08$.

In this experiment,  $N$ will represent the refinement level of the meshes  and it is chosen as similar to the circular domain. In tables \ref{table:k=0_cuadrado_L1}, \ref{table:k=1_cuadrado_L2} and \ref{table:k=2_cuadrado_L3} we report the first five vibration frequencies  obtained with our method, the respective order of convergence and extrapolates values for $\nu=0.35, 0.49, 0.5$ and polynomial
degrees $k=0,1,2$.

\begin{table}[H]
\begin{center}
\begin{tabular}{c |c c c c |c| c }
\toprule
 $\nu $        & $N=10$             &  $N=20$         &   $N=30$         & $N=40$ & $\alpha$& $\omega_{extr}$  \\ 
 \midrule
					 & 2.35882&   2.37007  & 2.37385&   2.37452 & 1.35&   2.37768\\
                                           & 2.76390&   2.78752  & 2.79281&   2.79434 & 1.79 &  2.79726\\
 \multirow{2}{1cm}{0.35}     &3.19541  &  3.24358  & 3.26016&   3.26354 &  1.28 & 3.27876\\
  					 &3.56499  &  3.60428  & 3.61377 &  3.61600  & 1.74 & 3.62146\\
                                           &3.73545  &  3.77218  & 3.78016 &  3.78270  & 1.80 & 3.78710\\
          \hline

  					   &3.17244&   3.22333&   3.24295&   3.24667&  1.15&  3.26734\\
  					   &3.43339&   3.48637&   3.49814&   3.50156&  1.80&  3.50800\\
 \multirow{2}{1cm}{0.49}      &3.67087&    3.70410&   3.71116 &  3.71350 & 1.82 & 3.71731\\
					   &3.97744&   4.02607&   4.03584&   4.03817&  2.00&  4.04256\\
					   &4.11516&   4.17884&   4.19661&   4.20125&  1.51&  4.21421\\

\hline

					&3.17263 &  3.22526&   3.24565&   3.24948  & 1.14 &  3.27131\\
					&3.44052 &  3.49132&   3.50264&   3.50594  & 1.79 &  3.51223\\
\multirow{2}{1cm}{0.5}       &3.70134&   3.72848&   3.73388&   3.73579 &  1.89&   3.73851\\
					&3.97643 &  4.02421&   4.03397&   4.03625  & 1.98 &  4.04072\\
                                          &4.22529 &  4.26701&   4.28257&   4.28629  & 1.13 &  4.30359\\
\bottomrule             
\end{tabular}
\end{center}
\caption{Computed lowest vibration frequencies for $k=0$ and different Poisson's ratio in the L-shaped domain.}
\label{table:k=0_cuadrado_L1}
\end{table}

\begin{table}[H]
\begin{center}
\begin{tabular}{c |c c c c |c| c }
\toprule
 $\nu $        & $N=10$             &  $N=20$         &   $N=30$         & $N=40$ & $\alpha$& $\omega_{extr}$  \\ 
 \midrule
					 &2.37477 &  2.37699&   2.37774 &  2.37779&  1.49&   2.37830\\
                                           &2.79548 &  2.79706&   2.79739 &  2.79746&  1.96&   2.79761\\
 \multirow{2}{1cm}{0.35}     &3.26609 &  3.27349&   3.27588 &  3.27605&  1.53&   3.27766\\
  					 &3.61883 &  3.62068 &  3.62114  & 3.62123 & 1.75&   3.62149\\
                                           &3.78550 &  3.78619&   3.78641 &  3.78642&  1.60&   3.78655\\
          \hline

  					   &3.24882&   3.25954 &  3.26319 &  3.26346 &  1.46&   3.26609\\
  					   &3.50462&   3.50739 &  3.50798 &  3.50814 &  1.87&   3.50845\\
 \multirow{2}{1cm}{0.49}       &3.71621&   3.71680 &  3.71700 &  3.71701 &  1.51&   3.71715\\
					   &4.04169&   4.04245 &  4.04259 &  4.04261 &  2.14&   4.04267\\
					   &4.20692&   4.21080 &  4.21184 &  4.21194 & 1.76 &    4.21251\\

\hline

					&3.25169 &  3.26296&   3.26682&   3.26710&   1.45 &  3.26991\\
					&3.50883 &  3.51157&   3.51216&   3.51232&   1.88 &  3.51261\\
\multirow{2}{1cm}{0.5}       &3.73771 &  3.73839&   3.73856&   3.73857&   1.29 &  3.73877\\
					&4.03966 &  4.04051&   4.04067&   4.04070&   2.13 &  4.04076\\
 				        &4.29036. &  4.29473&   4.29598&  4.29607&   1.70 &  4.29679\\
\bottomrule             
\end{tabular}
\end{center}
\caption{Computed lowest vibration frequencies for $k=1$ and different Poisson's ratio in the L-shaped domain.}
\label{table:k=1_cuadrado_L2}
\end{table}

\begin{table}[H]
\begin{center}
\begin{tabular}{c |c c c c |c| c }
\toprule
 $\nu $        & $N=10$             &  $N=20$         &   $N=30$         & $N=40$ & $\alpha$& $\omega_{extr}$  \\ 
 \midrule
					 & 2.37704 & 2.37798&   2.37831&   2.37833 &  1.44&  2.37857\\
                                           & 2.79708 & 2.79752&   2.79761&   2.79763 &  1.90&  2.79768\\
 \multirow{2}{1cm}{0.35}     &3.27366  & 3.27668 &  3.27773 &  3.27778  & 1.46&  3.278538\\
  					 &3.62068  & 3.62132 &  3.62148 &  3.62150  & 1.82&  3.62158\\ 
                                           &3.78619  & 3.78647 &  3.78657 &  3.78658  & 1.45&  3.78665\\
          \hline

  					   &3.25982&   3.26450 &  3.26619 &  3.26628&   1.41&   3.26754\\
  					   &3.50741&   3.50826 &  3.50843 &  3.50847&   2.05&   3.50854\\
 \multirow{2}{1cm}{0.49}       &3.71679&   3.71707 &  3.71717 &  3.71718&   1.35&   3.71726\\
					   &4.04246&   4.04263 &  4.04266 &  4.04266&   2.60&   4.04267\\
					   &4.21091&   4.21223 &  4.21269 &  4.21272&   1.43&   4.21306\\

\hline

					&3.26326 &  3.26821&   3.27000&   3.27009 &  1.40&   3.27146\\
					&3.51160 &  3.51243&   3.51259 &   3.51263  & 2.08&  3.51270\\
\multirow{2}{1cm}{0.5}      & 3.73830 &  3.73864&   3.73877&   3.73877 &  1.35&   3.73888\\
					&4.04053 &  4.04072&   4.04075&   4.04076 &  2.57&   4.04076\\
                                          &4.29486 &  4.29642&   4.29699&   4.29702 &  1.38&   4.29746\\
\bottomrule             
\end{tabular}

\end{center}
\caption{Computed lowest vibration frequencies for $k=2$ and different Poisson's ratio in the L-shaped domain.}
\label{table:k=2_cuadrado_L3}
\end{table}

We observe from Tables \ref{table:k=0_cuadrado_L1}, \ref{table:k=1_cuadrado_L2} and \ref{table:k=2_cuadrado_L3} that our method provides a double order of convergence for the vibration frequencies. Namely, in all cases we have $s \thickapprox 2 \min\{r, k+1\}$, which corresponds to the the best possible order of convergence for this problem. 

We end this section presenting plots of the first three eigenfunctions obtained with our method in the L-shaped domain.
In particular,  we show the eigenfunctions computed with $\nu=0.35$, $k=1$ as polynomial degree and $N=10$.

\begin{figure}[H]
	\begin{center}
		\begin{minipage}{14cm}
			\hspace{-1.4 cm}\centering\includegraphics[height=4.0cm, width=4.0cm]{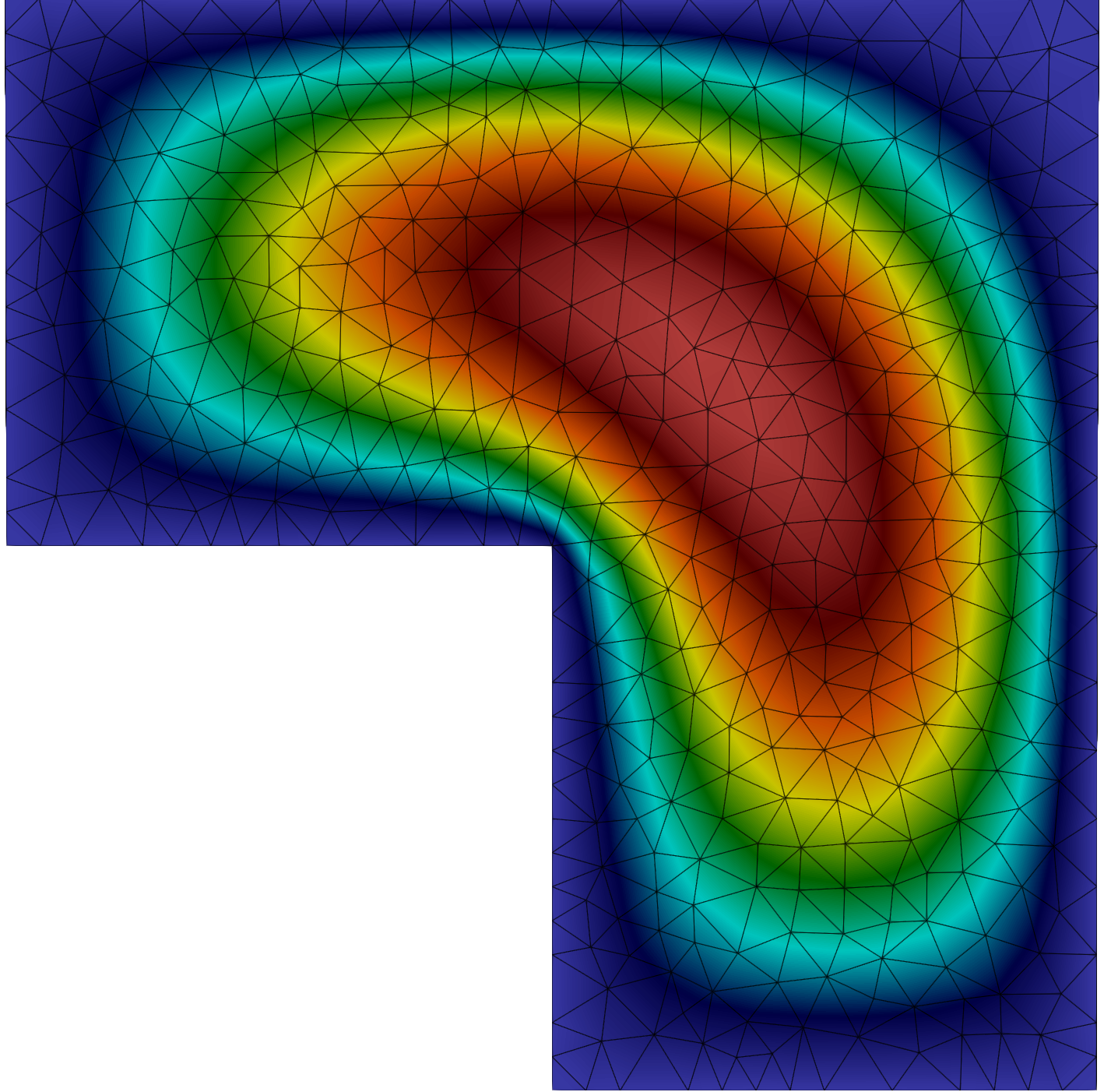}\hspace{0.3 cm}
			\centering\includegraphics[height=4.0cm, width=4.0cm]{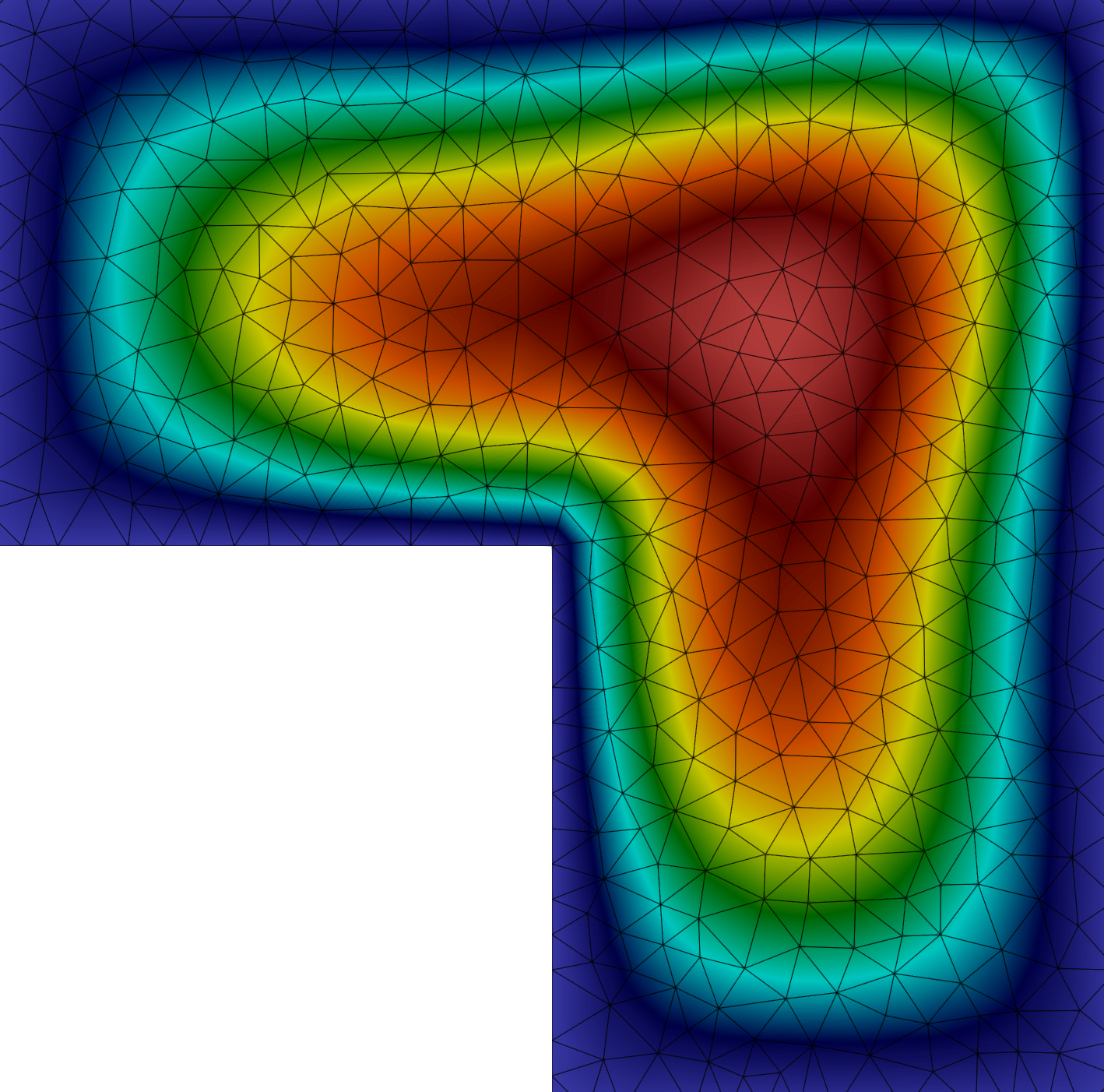}\hspace{0.3 cm}
			\centering\includegraphics[height=4.0cm, width=4.0cm]{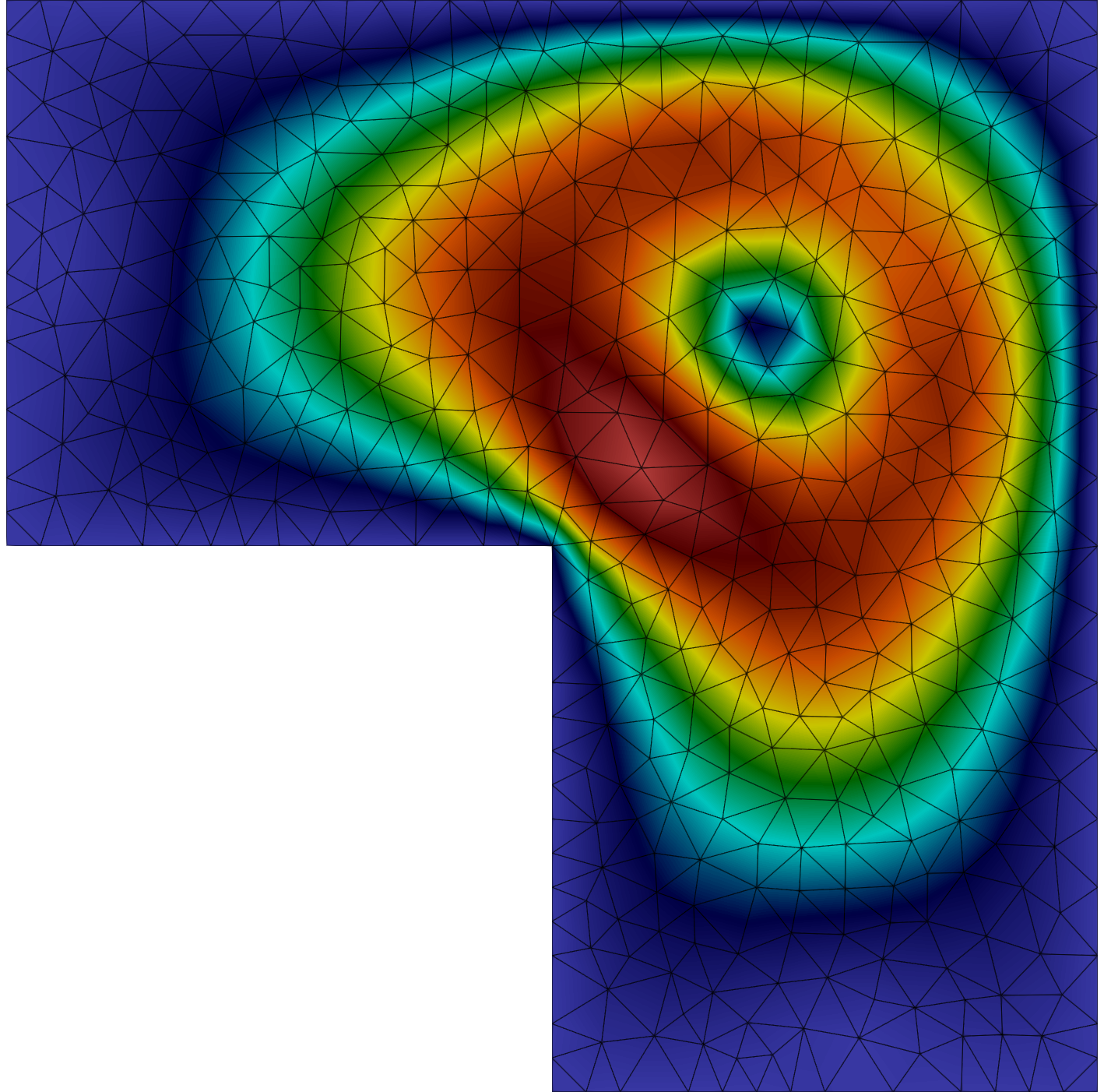}
			%\centering\includegraphics[height=4.4cm, width=5.4cm]{graphics/modo5c.eps}
                   \end{minipage}
		\caption{Eigenfunctions corresponding to the first (upper left), second (middle) and third (right) computed eigenfunctions with $\nu=0.35$, $N=10$ and $k=1$.}
		\label{FIG:plotL}
	\end{center}
\end{figure}
%\section{Acknowledgments}
%The authors are deeply grateful to Prof. Pablo Venegas (Universidad del B\'io-B\'io, Chile) for the comments that helped to improve aspects of our manuscript.

%\subsection{A posteriori error estimator}  We conclude our experimental section with an illustration of the performance of the a posteriori error estimator introduced in \eqref{eq:global_est}.  For this experiments we will consider the lowest order VEM spaces ($k=0$).  
%The domain for this test is $\Omega:= [−0.75, 0.75]^2 \setminus [−0.5, 0.5]^2$ which corresponds to a two-dimensional closed vessel with vacuum inside.

\bibliographystyle{siam}
\footnotesize
\bibliography{ILR_revised}

\begin{thebibliography}{10}

\bibitem{MR1115235}
{\sc I.~Babu\v{s}ka and J.~Osborn}, {\em Eigenvalue problems}, in Handbook of
  numerical analysis, {V}ol. {II}, Handb. Numer. Anal., II, North-Holland,
  Amsterdam, 1991, pp.~641--787.

\bibitem{MR3660301}
{\sc L.~Beir\~{a}o~da Veiga, D.~Mora, G.~Rivera, and R.~Rodr\'{\i}guez}, {\em A
  virtual element method for the acoustic vibration problem}, Numer. Math., 136
  (2017), pp.~725--763.

\bibitem{MR4080229}
{\sc F.~Bertrand, D.~Boffi, and R.~Stenberg}, {\em Asymptotically exact a
  posteriori error analysis for the mixed {L}aplace eigenvalue problem},
  Comput. Methods Appl. Math., 20 (2020), pp.~215--225.

\bibitem{MR3097958}
{\sc D.~Boffi, F.~Brezzi, and M.~Fortin}, {\em Mixed finite element methods and
  applications}, vol.~44 of Springer Series in Computational Mathematics,
  Springer, Heidelberg, 2013.

\bibitem{MR3715319}
{\sc D.~Boffi and R.~Stenberg}, {\em A remark on finite element schemes for
  nearly incompressible elasticity}, Comput. Math. Appl., 74 (2017),
  pp.~2047--2055.

\bibitem{MR3860570}
{\sc E.~C\'{a}ceres, G.~N. Gatica, and F.~A. Sequeira}, {\em A mixed virtual
  element method for a pseudostress-based formulation of linear elasticity},
  Appl. Numer. Math., 135 (2019), pp.~423--442.

\bibitem{MR2642330}
{\sc Z.~Cai, C.~Tong, P.~S. Vassilevski, and C.~Wang}, {\em Mixed finite
  element methods for incompressible flow: stationary {S}tokes equations},
  Numer. Methods Partial Differential Equations, 26 (2010), pp.~957--978.

\bibitem{MR2824163}
{\sc C.~Carstensen, M.~Eigel, and J.~Gedicke}, {\em Computational competition
  of symmetric mixed {FEM} in linear elasticity}, Comput. Methods Appl. Mech.
  Engrg., 200 (2011), pp.~2903--2915.

\bibitem{MR1722056}
{\sc R.~G. Dur\'{a}n, L.~Gastaldi, and C.~Padra}, {\em A posteriori error
  estimators for mixed approximations of eigenvalue problems}, Math. Models
  Methods Appl. Sci., 9 (1999), pp.~1165--1178.

\bibitem{MR3453481}
{\sc G.~N. Gatica, L.~F. Gatica, and F.~A. Sequeira}, {\em A priori and a
  posteriori error analyses of a pseudostress-based mixed formulation for
  linear elasticity}, Comput. Math. Appl., 71 (2016), pp.~585--614.

\bibitem{MR2594823}
{\sc G.~N. Gatica, A.~M\'{a}rquez, and M.~A. S\'{a}nchez}, {\em Analysis of a
  velocity-pressure-pseudostress formulation for the stationary {S}tokes
  equations}, Comput. Methods Appl. Mech. Engrg., 199 (2010), pp.~1064--1079.

\bibitem{MR2878511}
\leavevmode\vrule height 2pt depth -1.6pt width 23pt, {\em Pseudostress-based
  mixed finite element methods for the {S}tokes problem in {$\Bbb R^n$} with
  {D}irichlet boundary conditions. {I}: {A} priori error analysis}, Commun.
  Comput. Phys., 12 (2012), pp.~109--134.

\bibitem{MR851383}
{\sc V.~Girault and P.-A. Raviart}, {\em Finite element methods for
  {N}avier-{S}tokes equations}, vol.~5 of Springer Series in Computational
  Mathematics, Springer-Verlag, Berlin, 1986.
\newblock Theory and algorithms.

\bibitem{MR2831058}
{\sc J.~Gopalakrishnan and J.~Guzm\'{a}n}, {\em Symmetric nonconforming mixed
  finite elements for linear elasticity}, SIAM J. Numer. Anal., 49 (2011),
  pp.~1504--1520.

\bibitem{MR840970}
{\sc P.~Grisvard}, {\em Probl\`emes aux limites dans les polygones. {M}ode
  d'emploi}, EDF Bull. Direction \'{E}tudes Rech. S\'{e}r. C Math. Inform.,
  (1986), pp.~3, 21--59.

\bibitem{MR2009375}
{\sc R.~Hiptmair}, {\em Finite elements in computational electromagnetism},
  Acta Numer., 11 (2002), pp.~237--339.

\bibitem{MR0203473}
{\sc T.~Kato}, {\em Perturbation theory for linear operators}, Die Grundlehren
  der mathematischen Wissenschaften, Band 132, Springer-Verlag New York, Inc.,
  New York, 1966.

\bibitem{MR3618064}
{\sc H.~P. Langtangen and A.~Logg}, {\em Solving {PDE}s in {P}ython}, vol.~3 of
  Simula SpringerBriefs on Computing, Springer, Cham, 2016.
\newblock The FEniCS tutorial I.

\bibitem{MR3962898}
{\sc F.~Lepe, S.~Meddahi, D.~Mora, and R.~Rodr\'{\i}guez}, {\em Mixed
  discontinuous {G}alerkin approximation of the elasticity eigenproblem},
  Numer. Math., 142 (2019), pp.~749--786.

\bibitem{MR4077220}
{\sc F.~Lepe and D.~Mora}, {\em Symmetric and nonsymmetric discontinuous
  {G}alerkin methods for a pseudostress formulation of the {S}tokes spectral
  problem}, SIAM J. Sci. Comput., 42 (2020), pp.~A698--A722.

\bibitem{MR4253143}
{\sc F.~Lepe and G.~Rivera}, {\em A priori error analysis for a mixed {VEM}
  discretization of the spectral problem for the {L}aplacian operator},
  Calcolo, 58 (2021), pp.~Paper No. 20, 30.

\bibitem{MR3376135}
{\sc A.~M\'{a}rquez, S.~Meddahi, and T.~Tran}, {\em Analyses of mixed
  continuous and discontinuous {G}alerkin methods for the time harmonic
  elasticity problem with reduced symmetry}, SIAM J. Sci. Comput., 37 (2015),
  pp.~A1909--A1933.

\bibitem{MR3036997}
{\sc S.~Meddahi, D.~Mora, and R.~Rodr\'{\i}guez}, {\em Finite element spectral
  analysis for the mixed formulation of the elasticity equations}, SIAM J.
  Numer. Anal., 51 (2013), pp.~1041--1063.

\bibitem{MR4050542}
{\sc D.~Mora and G.~Rivera}, {\em {\it {A} priori} and {\it a posteriori} error
  estimates for a virtual element spectral analysis for the elasticity
  equations}, IMA J. Numer. Anal., 40 (2020), pp.~322--357.

\bibitem{MR3340705}
{\sc D.~Mora, G.~Rivera, and R.~Rodr\'{\i}guez}, {\em A virtual element method
  for the {S}teklov eigenvalue problem}, Math. Models Methods Appl. Sci., 25
  (2015), pp.~1421--1445.

\end{thebibliography}

\end{document}